\newcommand\blfootnote[1]{%
  \begingroup
  \renewcommand\thefootnote{}\footnote{#1}%
  \addtocounter{footnote}{-1}%
  \endgroup
}
\title{Frobenius-Perron Theory of Modified ADE Bound Quiver Algebras}
\author{Elizabeth Wicks}
\address{Wicks: Department of Mathematics, Box 354350,
University of Washington, Seattle, Washington 98195, USA}
\email{lizwicks@uw.edu}
\begin{document}

\maketitle

\begin{abstract}
	The Frobenius-Perron dimension for an abelian category was recently introduced in \cite{CGWZZZ}. We apply this theory to the category
of representations of the finite-dimensional radical square zero algebras associated to certain modified ADE graphs. In particular, we take an ADE quiver with arrows in a certain orientation and an arbitrary number of loops at each vertex. We show that the Frobenius-Perron dimension of this category is equal to the maximum number of loops at a vertex. Along the way, we introduce a result which can be applied in general to calculate the Frobenius-Perron dimension of a radical square zero bound quiver algebra. We use this result to introduce a family of abelian categories which produce arbitrarily large irrational Frobenius-Perron dimensions. 
\end{abstract}



\vsp

\section{Introduction}

\blfootnote{2010 Mathematics Subject Classification: Primary 16B50, 16G20.}

The Frobenius-Perron dimension of a finite tensor category has proven to be an important invariant since it was introduced by Etingof-Nikshych-Ostrik \cite{ENO}. One can define the Frobenius-Perron dimension of an object in a tensor category as well as the dimension of the tensor category itself, and these dimensions encode many useful properties. For example, the Frobenius-Perron dimension of a finite tensor category is invariant under Morita equivalence \cite{EO}. Furthermore, one can determine whether a finite tensor category is equivalent to the representation category of a finite-dimensional quasi-Hopf algebra by examining the Frobenius-Perron dimension of the objects \cite{EO}. These ideas have been essential to the classification of fusion categories and understanding the representation theory of semi-simple Hopf algebras. 

In 2017, a new type of Frobenius-Perron dimension was introduced, which we will denote $\fpd$ \cite{CGWZZZ}. The authors extended the definition of Frobenius-Perron dimension of an object in a finite tensor category to a much more general setting \cite[Ex. 2.11]{CGWZZZ}. In fact, this definition of $\fpd$ can be applied to any $\kk$-linear category along with a chosen endofunctor. The authors focus on understanding the $\fpd$ of abelian categories (see \cref{xxdef2.1}) and derived categories, where the endofunctor is the suspension. The results in \cite{CGWZZZ} suggest that the Frobenius-Perron dimension of an abelian or derived category is an important and useful invariant with connections to representation theory. The authors hope to continue to explore the properties of the $\fpd$ in the derived and abelian settings and to develop more applications. 

The intent of this paper is to gain a better understanding of the $\fpd$ as it relates to the representation theory of finite-dimensional algebras. First, we review an important theorem from \cite{CGWZZZ}, which is analogous to the finite tensor category result \cite[Prop. 6.3.3]{EGNO}. 

\begin{theorem} \label{thm:CGWZZZ} \cite{CGWZZZ} If $F: \Acal \rightarrow \Bcal$ is an embedding of abelian categories, then $\fpd \Acal \leq \fpd \Bcal.$
\end{theorem}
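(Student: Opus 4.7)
The plan is to work directly from the definition of $\fpd$ for an abelian category given in \cite{CGWZZZ}. Typically, such a Frobenius-Perron dimension is defined as a supremum, indexed by finite collections $\Phi$ of objects (or more generally, by test data built from finitely many objects), of the spectral radius $\rho(M(\Phi))$ of a nonnegative matrix $M(\Phi)$ whose entries are determined by dimensions of $\Hom$-spaces between the objects in $\Phi$ together with the chosen endofunctor. So my first step is to write down this definition precisely and identify exactly which categorical data go into building $M(\Phi)$.

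Next, I would use that by hypothesis $F$ is an embedding, hence exact and fully faithful, so that for all $X, Y \in \Acal$ one has $\Hom_\Acal(X,Y) \cong \Hom_\Bcal(FX,FY)$ as $\kk$-vector spaces, and subobject/quotient lattices are transported faithfully by $F$. Given any finite $\Phi = \{X_1, \ldots, X_n\} \subset \Acal$, I would form $F(\Phi) = \{FX_1, \ldots, FX_n\} \subset \Bcal$ and argue that the associated matrices satisfy $M(F(\Phi)) = M(\Phi)$ entry by entry, so that $\rho(M(F(\Phi))) = \rho(M(\Phi))$.

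Taking the supremum of $\rho(M(\Phi))$ over all finite $\Phi \subset \Acal$ gives $\fpd \Acal$, while each $F(\Phi)$ is a legitimate test family for $\fpd \Bcal$, so the supremum over $\Bcal$ dominates it. This yields $\fpd \Acal \leq \fpd \Bcal$, as desired.

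The main obstacle is not conceptual but bookkeeping: the definition of $\fpd$ for an abelian category in \cite{CGWZZZ} may involve auxiliary data beyond the raw objects of $\Phi$, for instance simple subquotients, extensions, or ambient test objects. I need to verify that every piece of data feeding into $M(\Phi)$ is of a type preserved by an exact fully faithful functor. Once this is checked, the proof is essentially a one-line transport of structure; the real work is making sure the definition of $\fpd$ is local enough to the abelian subcategory generated by $\Phi$ that embedding suffices (as opposed to, say, an equivalence onto a Serre subcategory being required).
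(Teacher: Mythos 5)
The paper does not prove this statement; it is cited directly from \cite{CGWZZZ}, so there is no in-paper argument to compare against. Judged on its own terms, your proposal has the right shape but contains a genuine gap that you flagged but did not resolve.

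The gap is in the assertion that $M(F(\Phi)) = M(\Phi)$ entry by entry. In \cref{xxdef2.1}, the adjacency matrix $A(\phi)$ has entries $a_{ij} = \dim \Ext^1_{\Ccal}(X_i, X_j)$; the $\Hom$-dimensions only govern whether $\phi$ is a brick set. Full faithfulness of $F$ does preserve $\Hom$-dimensions, so $F(\phi)$ is a brick set whenever $\phi$ is, and that part of your argument is fine. But an exact fully faithful functor $F$ in general yields only an \emph{injection} $\Ext^1_{\Acal}(X,Y) \hookrightarrow \Ext^1_{\Bcal}(FX, FY)$: pushing an extension $0 \to Y \to E \to X \to 0$ forward along $F$ is well-defined, and if the image splits in $\Bcal$ the splitting map pulls back by full faithfulness, so the map is injective; but it need not be surjective, since $\Bcal$ may have extensions of $FX$ by $FY$ whose middle term is not in the essential image of $F$. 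So you only get $A(F(\phi)) \geq A(\phi)$ entry by entry, not equality, unless ``embedding'' is taken to include preservation of $\Ext^1$.

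This is fixable, and the fix is the real content you omitted: the matrices in question are nonnegative, and the spectral radius is monotone with respect to the entry-wise order on nonnegative square matrices (a standard Perron--Frobenius fact, see e.g.\ \cite{Horn}). Hence $\rho(A(\phi)) \leq \rho(A(F(\phi))) \leq \fpd \Bcal$ for every brick set $\phi$ in $\Acal$, and taking the supremum over $\phi$ gives $\fpd \Acal \leq \fpd \Bcal$. You anticipated that ``auxiliary data'' such as extensions might not transport isomorphically, but you left the question open rather than noting that a one-sided comparison plus spectral-radius monotonicity suffices; as written, the equality claim $M(F(\Phi)) = M(\Phi)$ is unjustified.
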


The previous theorem is useful in that we can use the $\fpd$ to show non-existence of embeddings between abelian categories. However, calculating the $\fpd$ can be a difficult task. If we could develop strategies for calculating the $\fpd$ in certain abelian categories, we could compare those categories via their $\fpd$. To that end, we prove the following theorem, which greatly simplifies the calculation of the $\fpd$ for radical square zero finite-dimensional algebras. First we need some hypotheses.

\begin{hypothesis} \label{hyp:quiver} 
Let $A=\kk Q/(\geq 2)$ for some finite quiver $Q$, where $(\geq 2)$ is the ideal generated by paths of length greater than 1, and let $J$ be the ideal of $A$ generated by the loops. Let $Q'$ be the quiver formed from $Q$ by removing all the loops. Define $C:=\kk Q' \cong \kk Q/\tilde{J}$, where $\tilde{J}$ is the ideal of $\kk Q$ generated by the loops, and $B:=A/J \cong \kk Q'/(\geq 2)=C/(\geq 2)$. Let $\tilde{P}_i$ (respectively $\tilde{I}_i, \tilde{S}_i$) denote the indecomposable projective (respectively injective, simple) $B$-module at each vertex $i$. Note that as $A$-modules, $\tilde{S}_i \cong S_i$ for each $i$.
\end{hypothesis}

Calculating the $\fpd$ requires an understanding of the brick modules (\cref{xxdef2.1}).

\begin{theorem} \label{lem:quivpowerful}
	Assume \cref{hyp:quiver}. 
	Then there is a one-to-one correspondence between the isomorphism classes below:
	$$\{ \text{brick } A\dash modules \} \longleftrightarrow \{ \text{brick } C \dash \text{modules annihilated by } (\geq 2)\}$$
		Furthermore, if $M,N$ are brick $A$-modules, we have a natural isomorphism
	\[
	\Hom_A(M,N) \cong \Hom_B(M,N) \cong \Hom_{C}(M,N).
	\]
\end{theorem}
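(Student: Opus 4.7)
The plan is to reduce the theorem to the single observation that every loop in $Q$ must act as zero on any brick $A$-module; once this is established, both the bijection and the Hom-isomorphisms will follow formally from the two surjections $A \twoheadrightarrow B$ and $C \twoheadrightarrow B$.

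The central step is the claim: if $M$ is a brick $A$-module and $\alpha$ is a loop at a vertex $i$, then the arrow action $M_\alpha$ is zero. To see this I would define $\tilde{\alpha}\colon M \to M$ by $\tilde{\alpha}|_{M_i} := M_\alpha$ and $\tilde{\alpha}|_{M_j} := 0$ for $j \neq i$, and verify $\tilde{\alpha} \in \operatorname{End}_A(M)$. The verification amounts, for each arrow $\beta\colon s \to t$ of $Q$, to checking $\tilde{\alpha}_t \circ M_\beta = M_\beta \circ \tilde{\alpha}_s$; the only nontrivial cases are those with $s = i$ or $t = i$, and in each such case both sides vanish because $\alpha\beta$ and $\beta\alpha$ are paths of length two in $A = \kk Q/(\geq 2)$. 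Since $M$ is a brick, $\tilde{\alpha} = c\cdot \mathrm{id}_M$ for some $c \in \kk$. If some $M_j$ with $j \neq i$ is nonzero, the restriction to $M_j$ forces $c = 0$; otherwise $M$ is supported only at $i$, in which case $M_\alpha = c \cdot \mathrm{id}_{M_i}$ together with $M_\alpha^2 = 0$ again forces $c = 0$. In either case $M_\alpha = 0$, so the ideal $J$ annihilates $M$ and $M$ is pulled back from a $B$-module.

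Given this, the remaining assertions become formal. A $\kk$-linear map between two $B$-modules is $A$-linear iff it commutes with the action of every arrow of $Q$, but the loop arrows act as zero on both modules, so $A$-linearity reduces to $B$-linearity; the analogous argument with $C \twoheadrightarrow C/(\geq 2) = B$ shows that $C$-linearity also reduces to $B$-linearity. Specializing to $M = N$ then shows that the brick condition over $A$, $B$, and $C$ coincides on the class of $B$-modules, so the one-to-one correspondence between brick $A$-modules and brick $C$-modules annihilated by $(\geq 2)$ is realized by identifying both sides with brick $B$-modules via restriction of scalars along the two quotient maps. The main obstacle is the brick-kills-loops step, which relies on the careful construction of $\tilde{\alpha}$ and the full force of both radical-square-zero relations $\alpha\beta = 0$ and $\beta\alpha = 0$; the remainder is standard formalism for pulling back modules along a surjection of finite-dimensional algebras.
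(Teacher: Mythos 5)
Your proof is correct and follows essentially the same route as the paper. The paper packages your key observation abstractly: it first notes that the loops are central nilpotent elements of $\kk Q/(\geq 2)$, then proves (\cref{lem:powerful}, \cref{prp:annih}, \cref{cor:annih}, \cref{prp:onetooneJ}) that any ideal generated by central nilpotents annihilates every brick and identifies brick $A$-modules with brick $A/J$-modules, after which it composes with the equivalence $B = C/(\geq 2)$; your direct construction of $\tilde\alpha$ is precisely the paper's "multiplication by a central element" endomorphism $\varphi_x$, your verification that it is $A$-linear is equivalent to verifying centrality of $\alpha$, and the scalar-must-vanish step and formal reduction to $B$-linearity match the paper's nilpotence and restriction-of-scalars arguments.
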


We apply this theorem to some examples. We hope that these results in conjuction with \cref{thm:CGWZZZ} will be of use to those who are trying to compare or understand these categories.

In particular, we focus on a family of bound quiver algebras related to the $ADE$ quiver algebras (\cref{def:modifiedADE}). The ADE quivers and related quivers mysteriously appear in settings as diverse as the classification of semisimple Lie algebras \cite{Humphreys}, the McKay correspondence \cite{McKay}, the classification of minimal models of two-dimensional conformal field theory \cite{Cappelli}, and more. In addition to their ubiquity, the $ADE$ quivers are of finite representation type and their representations are well-known \cite{Gabriel}, making variations of their representation categories ideal to study. 

\begin{theorem} \label{thm:ADE}
Let $A$ be a modified ADE bound quiver algebra with $N_i$ loops at each vertex $i$ (\cref{def:modifiedADE}). Then
\[
\fpd \l(A \dash \bmod\r) = \max\{N_1, \ldots, N_n\}.
\]
\end{theorem}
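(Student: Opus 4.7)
The plan is to use \cref{lem:quivpowerful} to reduce the calculation of $\fpd(A\dash\bmod)$ to a combinatorial problem about bricks of the hereditary ADE algebra $C=\kk Q'$ that are annihilated by $(\geq 2)$, together with the self-extensions that the loops at each vertex contribute. Since \cref{lem:quivpowerful} supplies both a bijection of bricks and an isomorphism of Hom spaces, all of the morphism data entering the Frobenius-Perron computation depends only on $C$; the loops can affect $\fpd(A\dash\bmod)$ only through the self-extensions they create, which one expects to be bounded by $\max\{N_i\}$.

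The first concrete step is to enumerate the brick $A$-modules explicitly. Because $C$ is hereditary of ADE type, its indecomposables are in bijection with the positive roots of the underlying root system, all are bricks, and those annihilated by $(\geq 2)$ form an explicit subcollection identifiable by their dimension vectors once the orientation is fixed. \cref{lem:quivpowerful} then transports this description, together with the Hom isomorphism, back to $A\dash\bmod$, giving tight control on which matrices can arise in the definition of $\fpd$.

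For the lower bound $\fpd(A\dash\bmod) \geq \max\{N_i\}$, I would choose a vertex $i_0$ with $N_{i_0} = \max\{N_i\}$ and construct a test object $X$ built from the $N_{i_0}$ loops at $i_0$—for instance, the indecomposable projective $A$-module $P_{i_0}$, or a direct sum of $N_{i_0}$ distinct non-trivial self-extensions of $S_{i_0}$—so that together with the single brick $S_{i_0}$ the matrix prescribed by the definition of $\fpd$ attains eigenvalue $N_{i_0}$. This direction is essentially a direct calculation for one explicit test configuration.

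The main obstacle is the upper bound $\fpd(A\dash\bmod) \leq \max\{N_i\}$. For an arbitrary object $X$ and arbitrary finite collection of bricks, one must bound the Perron-Frobenius eigenvalue of the associated matrix by $\max\{N_i\}$. The ADE hypothesis is essential here: the adjacency matrix of any ADE Dynkin graph has spectral radius strictly less than $2$, so the non-loop contributions cannot on their own produce large Perron eigenvalues. Using a filtration of $X$ by powers of the loop ideal $J$, and combining the ADE bound on the non-loop part with the per-vertex loop bound $N_i$, the Perron eigenvalue should be dominated by $\max\{N_i\}$. The delicate point is ruling out interactions between loop and non-loop morphisms that might amplify matrix entries beyond this value; this will require a careful uniform argument across all $X$ and all finite brick collections, and is where the bulk of the work lies.
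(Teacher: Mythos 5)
Your reduction via \cref{lem:quivpowerful} to the hereditary ADE algebra $C=\kk Q'$, and the plan to enumerate brick $A$-modules through the Gabriel correspondence with positive roots, match the paper's approach. But both your lower-bound test configuration and, more seriously, your upper-bound strategy differ in ways that matter.

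For the lower bound, the argument is much simpler than what you sketch, and your sketch does not quite fit the definition being used. In this paper $\fpd$ is defined as a supremum of $\rho(A(\phi))$ over brick \emph{sets} $\phi$, where $A(\phi)_{ij} = \dim \Ext^1(X_i,X_j)$; there is no auxiliary ``test object'' $X$ paired with the bricks. One simply takes the singleton brick set $\phi = \{S_{i_0}\}$ for a vertex $i_0$ with $N_{i_0} = \max_i N_i$. Since $\dim \Ext^1_A(S_{i_0}, S_{i_0})$ equals the number of loops at $i_0$ by \cref{fct:collection}\cref{itm:dimextsimples}, the adjacency matrix is the $1 \times 1$ matrix $(N_{i_0})$ and the lower bound is immediate. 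Your proposal to build $X$ from $P_{i_0}$ or from a direct sum of self-extensions of $S_{i_0}$, and to combine it ``together with'' the brick $S_{i_0}$, does not produce a brick set and so does not plug into the definition.

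For the upper bound, your invocation of the ADE spectral gap ($\rho < 2$ for the graph adjacency matrix) is not the mechanism at work, and the heuristic does not close: for nonnegative matrices, bounding $\rho(D)$ by $\max N_i$ and $\rho(N)$ by something less than $2$ gives no control on $\rho(D+N)$ in general (e.g.\ $D = N_{\max}I$ and $N$ with $\rho(N)=1$ give $\rho(D+N) = N_{\max}+1$). Your suggested filtration by powers of $J$ is also vacuous here because $J^2 \subset (\geq 2) = 0$. What actually makes the upper bound go through in the paper is \emph{nilpotence}, not smallness: after listing the indecomposable $C$-modules annihilated by $(\geq 2)$ (which, for the chosen orientations, are the simples together with a handful of two- and three-dimensional ``string'' bricks), the Hom matrix restricts the possible brick sets, and the vanishing results \cref{lem:rad}, \cref{lem:ext1projnohom}, \cref{lem:sinknohom} force $\Ext^1$ to vanish between any pair of non-simple bricks, from a non-simple $\tilde P_i$ to a different simple, and so on. Because $Q'$ is an acyclic orientation of a tree, the vertices of the simples in $\phi$ can be topologically ordered, making the $\Ext^1$ matrix among simples upper triangular with diagonal $N_{i_1},\ldots,N_{i_M}$; the non-simple bricks contribute only zeros on the diagonal and an overall (block) strictly upper triangular shape. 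Hence $\rho(A(\phi))$ equals the largest $N_{i_j}$ appearing, giving exactly $\max\{N_i\}$. You would need to carry out this explicit Ext-vanishing analysis and the case split for the $D$ and $E$ branch vertices — that is the real content, and the ADE spectral radius bound plays no role in it.
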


Some consequences of this theorem include non-existence of embeddings. Suppose that $A$ is a modified $ADE$ bound quiver algebra with at least one loop on a single vertex. Then the category of representations of $A$ does not embed into any of the categories of representations of the $A_n, D_n,$ or $E_n$ quiver algebras, since these have $\fpd$ 0 \cite{CGWZZZ}. Similarly, if $A$ is a modified $ADE$ bound quiver algebra with at least two loops on a single vertex, the category of representations of $A$ does not embed into any of the categories of representations of the $\tilde{A}_n,\tilde{D}_n,$ or $\tilde{E}_n$ quiver algebras, since these have $\fpd$ 1 \cite{CGWZZZ}. And if $A,B$ are modified $ADE$ bound quiver algebras where the maximum number of loops at a single vertex of $A$ is $N$, and the maximum number of loops of at a single vertex of $B$ is $M$, then if $N < M$, $B\dash\bmod$ cannot embed into $A\dash\bmod$.

The following question would be interesting to investigate.

\begin{question} What is the $\fpd$ for the $\tilde{A}\tilde{D}\tilde{E}$ modified bound quiver algebras?
\end{question}

Notice that \cref{ex:Qnm} is of type $\tilde{A}(1)$. In this case we have a formula for the $\fpd$ which depends on the number of loops at each vertex, which is irrational in many cases. So we do not get the same result as \cref{thm:ADE}. It would be interesting to see if there is a formula that describes the $\fpd$ of the $\tilde{A}\tilde{D}\tilde{E}$ modified bound quiver algebras.

It is natural to ask about the $\fpd$ of the derived category.

\begin{question}
Let $A$ be a modified $ADE$ bound quiver algebra. What is $\fpd D^b(A\dash\bmod)?$
\end{question}

We defined the modified ADE bound quiver algebras with a certain arrow orientation (\cref{def:modifiedADE}). We ask if the $\fpd$ is invariant under changing directions of arrows, and show that the question has a positive answer up to $n=3$ (\cref{thm:ADEtilde3}).

\begin{question} \label{conj:ADE}
Let $Q'$ be a quiver whose underlying graph is $A_n, D_n,$ or $E_n$ for some $n$. Let $Q$ be the quiver formed from $Q'$ by adding $N_i$ loops to each vertex $i$, and let $A=kQ/(\geq 2)$. Is it true that
	\[
	\fpd \l(A \dash \bmod\r) = \max\{N_1, \ldots, N_n\}?
	\]
\end{question}

The $\fpd$ of an abelian category is an element of $\R^{\geq 0} \cup \{\pm \infty\}$. It is natural to ask

\begin{question}
For a finite-dimensional algebra $A$, what values of $\fpd A\dash \bmod$ are possible?
\end{question}

As a partial answer to this question, we provide the following family of examples using \cref{lem:quivpowerful}. The general answer to this question is unknown.

\begin{proposition} \label{prp:Qnm}
There exists a family of finite-dimensional radical square zero algebras $A$ which can be used to construct arbitrarily large irrational values of $\fpd \l(A\dash\bmod\r)$.
\end{proposition}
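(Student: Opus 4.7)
The plan is to construct the family explicitly and then read off the $\fpd$ from the calculation foreshadowed in \cref{ex:Qnm}. A natural candidate is $A_{n,m} := \kk Q_{n,m}/(\geq 2)$, where $Q_{n,m}$ has two vertices, $n$ loops at vertex~$1$, $m$ loops at vertex~$2$, and a fixed collection of arrows between the two vertices (so that the loop-free quotient $Q'$ is of affine type $\tilde{A}_1$). Varying $n$ and $m$ over $\NN$ produces the desired family; the task is to compute $\fpd(A_{n,m}\dash\bmod)$ and verify that the values are irrational and unbounded.

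First I would enumerate the brick $A_{n,m}$-modules using \cref{lem:quivpowerful}. The loop-free quiver $Q'$ is Kronecker-type, so $C=\kk Q'$ is a classical tame hereditary algebra, and its bricks are well understood: the two simples $\tilde{S}_1,\tilde{S}_2$, a $\mathbb{P}^1$-family of $(1,1)$-dimensional bricks, and the preprojective and preinjective families. Since $Q'$ has no paths of length greater than one at all (in the one-arrow case) or only restricted such paths, the condition ``annihilated by $(\geq 2)$'' cuts the classification to an explicit finite part plus the $\mathbb{P}^1$-family. By the second assertion of \cref{lem:quivpowerful}, the Hom spaces among these bricks are the same whether computed over $A_{n,m}$, $B$, or $C$.

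Next I would apply the Frobenius-Perron formalism of \cite{CGWZZZ}: $\fpd(A_{n,m}\dash\bmod)$ arises as a supremum, over finite collections $\Phi$ of bricks (together with auxiliary objects), of the spectral radius of a matrix whose entries are dimensions of Hom spaces between objects built from $\Phi$. The loops at the vertices, although invisible in the brick/Hom classification, contribute to this supremum via the multiplicities at which the simples $S_1,S_2$ occur as summands/subquotients of the test objects; this is where the dependence on $n$ and $m$ enters. The computation reduces to finding the largest real root of a characteristic polynomial whose coefficients are polynomial in $n$ and $m$.

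The main obstacle will be the spectral radius step: one must identify a finite configuration $\Phi$ that actually realizes the supremum, and then control the characteristic polynomial carefully enough to see irrationality. I expect the dominant root to be of the form $\tfrac{1}{2}\bigl(n+m+\sqrt{(n-m)^{2}+c\,nm}\bigr)$ for some positive integer constant $c$ coming from the connecting arrows, matching the standard quadratic that appears for $\tilde{A}_1$-type spectral computations. Granting such a closed form, the discriminant $(n-m)^{2}+c\,nm$ fails to be a perfect square for infinitely many pairs $(n,m)$, so $\fpd(A_{n,m}\dash\bmod)$ is irrational for infinitely many $(n,m)$; and since the expression grows at least linearly in $\max(n,m)$, these irrational values are unbounded, proving the proposition.
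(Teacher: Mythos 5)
Your proposal is essentially the same construction as the paper's (the paper's \cref{ex:Qnm} uses exactly this two-vertex quiver $Q(n,m)$ with $n$ loops at vertex $1$, $m$ loops at vertex $2$, and one arrow in each direction, then applies \cref{lem:quivpowerful} and computes a spectral radius), so the overall strategy is sound. However, there are a few concrete slips you would need to fix.

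First, and most importantly, the adjacency matrix in \cref{xxdef2.1} has entries $a_{ij} = \dim \Ext^1_{\Ccal}(X_i,X_j)$, not $\dim\Hom$. The $\Hom$ spaces only enter through the brick-set condition; the spectral radius is taken of the $\Ext^1$ matrix. This is where the loops actually enter the computation, since $\dim\Ext^1_A(S_i,S_i)$ equals the number of loops at $i$ by \cref{fct:collection}~\cref{itm:dimextsimples}; the loops are not ``invisible in the Hom classification'' and then somehow re-entering through multiplicities in auxiliary objects. Second, the brick classification is much simpler than what you describe: once you pass to the radical-square-zero quotient $B=A/J$ of the two-vertex quiver with one arrow each way, $B$ has only four indecomposables (two simples, two indecomposable projectives), all bricks, and no $\mathbb{P}^1$-family survives the annihilated-by-$(\geq 2)$ condition. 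Invoking the preprojective/regular/preinjective trichotomy of tame hereditary algebras is the wrong frame here; in fact $C=\kk Q'$ for the cyclic two-vertex quiver is not even finite-dimensional, and what matters is only the finitely many $C$-bricks that factor through $B$. Third, the closed form you predict has the wrong discriminant: the only brick set with nonzero spectral radius is $\{S_1,S_2\}$ with adjacency matrix $\begin{pmatrix} n & 1 \\ 1 & m \end{pmatrix}$, giving $\fpd = \tfrac{1}{2}\bigl(n+m+\sqrt{(n-m)^2+4}\bigr)$, i.e.\ the $+4$ is a constant coming from the two connecting arrows, not a term $c\,nm$. The conclusion (irrational and unbounded by choosing, say, $m=n+1$) still goes through, but only after these corrections.
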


\subsection{Conventions and definitions}

We assume that the base field $\kk$ is algebraically closed. 

If $A$ is a finite-dimensional $\kk$-algebra, then $A\dash\bmod$ (respectively $\bmod\dash A$) denotes the category of finite-dimensional left (respectively right) $A$-modules. We will use $A^{op}$ to denote the opposite algebra of $A$.

Let $Q$ denote a finite quiver. The ideal generated by all paths of lengths greater than $1$ in $\kk Q$ is denoted by $(\geq 2)$. If $A=\kk Q/I$ is a bound quiver algebra, we denote the indecomposable projective (respectively injective, simple) module at vertex $i$ by $P_i$ (respectively $I_i$, $S_i$).

The spectral radius of a square matrix $M$ with entries in $\R$ or $\C$ is denoted $\rho(M)$ \cite[Defn. 1.2.9]{Horn}.

We review the relevant definitions, which can be found in \cite{CGWZZZ}.   

\begin{definition}
\label{xxdef2.1} 
Let $\Ccal$ be a $\kk$-linear abelian category, and let $\phi:=\{X_1, X_2, \ldots,X_n\}$ be a finite subset of nonzero
objects in ${\mathcal C}$.
\begin{enumerate}
\item[(1)]
The {\it adjacency matrix} of $\phi$ is defined to be
$$A(\phi):=(a_{ij})_{n\times n}, \quad 
{\text{where}}\;\; a_{ij}:=\dim \Ext^1_{\Ccal}(X_i, X_j) \;\;\forall i,j.$$
\item[(2)]
An object $M$ in ${\mathcal C}$ is called a {\it brick} 
\cite[Definition 2.4, Ch. VII]{ASS} if 
\begin{equation}
\notag
\Hom_{\mathcal C}(M,M)=\kk. 
\end{equation}
\item[(3)]
$\phi\in \Phi$ is called a {\it brick set} if each $X_i$ is a brick and 
$$\dim \Hom_{\Ccal}(X_i, X_j)=\delta_{ij}$$
for all $1\leq i,j\leq n$. The set of brick $n$-object subsets is denoted by $\Phi_{n,b}$. We write $\Phi_{b}=\bigcup_{n\geq 1} 
\Phi_{n,b}$. 
\end{enumerate}
\end{definition}

\begin{definition}
\label{xxdef2.3}
The {\it $n$th Frobenius-Perron dimension} of $\Ccal$ is defined to be
$$\fpd^n (\Ccal):=\sup_{\phi\in \Phi_{n,b}}\{\rho(A(\phi))\}.$$
If $\Phi_{n,b}$ is empty, then by convention, $\fpd^n(\Ccal)=-\infty$.

The {\it Frobenius-Perron dimension} of $\Ccal$ is defined to be
$$\fpd (\Ccal):=\sup_n \{\fpd^n(\Ccal)\}
=\sup_{\phi\in \Phi_{b}} \{\rho(A(\phi)) \}.$$
\end{definition}

\begin{definition} \label{def:modifiedADE}

We define a family of algebras closely related to the ADE path algebras.

Define $A(n, N_1, \ldots, N_n)=\kk Q_A/(\geq 2)$ for $n \geq 1$, where $Q_A$ is the following quiver with $N_i$ loops (labeled $a_i^l$ for $l=1, \ldots, N_i$) at vertex $i$:
	
	\[
	\begin{tikzcd}
		1  \arrow[out=252, in=-72, loop] \arrow[out=245, in=-65, loop] \arrow[out=238, in=-58, loop, swap, "a_1^l"] \arrow[r, "x_2"] & 2 \arrow[out=245, in=-65, loop, swap, "a_2^l"] \arrow[r, "x_3"] &  \cdots \arrow[r, "x_{n}"] & n \arrow[out=245, in=-65, loop] \arrow[out=238, in=-58, loop, swap, "a_n^l"]
	\end{tikzcd} 
	\]	
	
Define $D(n,M_1, \ldots, M_n)=\kk Q_D/(\geq 2)$ for $n \geq 4$, where $Q_D$ is the following quiver with $M_i$ loops (labeled $b_i^l$ for $l=1, \ldots, M_i$) at vertex $i$:

	\[
	\begin{tikzcd}
	{} & {} & {} & n-1 \arrow[out=65, in=115, loop, swap, "b_{n-1}^l"] & {}  \\
	1   \arrow[r, "x_2"]  \arrow[out=252, in=-72, loop] \arrow[out=245, in=-65, loop] \arrow[out=238, in=-58, loop, swap, "b_1^l"] & 2 \arrow[out=245, in=-65, loop, swap, "b_2^l"] \arrow[r, "x_3"] &  \cdots \arrow[r, "x_{n-2}"] & n-2 \arrow[out=252, in=-72, loop] \arrow[out=245, in=-65, loop] \arrow[out=238, in=-58, loop, swap, "b_{n-2}^l"] \arrow[r, "x_n"] \arrow[u, "x_{n-1}"] & n \arrow[out=245, in=-65, loop] \arrow[out=238, in=-58, loop, swap, "b_n^l"]
	\end{tikzcd} 
	\]

Define $E(n,K_1, \ldots, K_n):=\kk Q_E/(\geq 2)$ for $n \in \{6,7,8\}$, where $Q_E$ is the following quiver with $K_i$ loops (labeled $c_i^l$ for $l=1, \ldots, K_i$) at vertex $i$:

	\[
	\begin{tikzcd}
	{} & {} & 2 \arrow[out=65, in=115, loop, swap, "c_{2}^l"] & {}  \\
	1   \arrow[r, "x_3"]  \arrow[out=252, in=-72, loop] \arrow[out=245, in=-65, loop] \arrow[out=238, in=-58, loop, swap, "c_1^l"] & 3 \arrow[out=245, in=-65, loop, swap, "c_3^l"] \arrow[r, "x_4"] & 4 \arrow[out=252, in=-72, loop] \arrow[out=245, in=-65, loop] \arrow[out=238, in=-58, loop, swap, "c_{4}^l"] \arrow[r, "x_5"] \arrow[u, "x_{2}"] & \cdots \arrow[r, "x_n"] & n  \arrow[out=245, in=-65, loop] \arrow[out=238, in=-58, loop, swap, "c_n^l"]
	\end{tikzcd} 
	\]

We refer to the algebras above as the modified ADE bound quiver algebras. When the number of each loops at each vertex is given, we refer to these algebras as $A(n),D(n)$, and $E(n)$. These algebras appear in \cref{thm:A_n1dir,thm:Dn1dir,thm:En1dir}.
\end{definition}
		

\vsp		
		
\section{Preliminaries}

	We compile a list of results which we use later in the paper. The reader is welcome to skip this section and come back to it for reference. 
	
	\begin{definition} We define the path algebra $\kk Q$ of a finite quiver $Q$ to be the associative $\kk$-algebra determined by the generators $e_i, \alpha$ for all vertices $i$ of $Q$ and all edges $\alpha$ of $Q$, such that for all $i,j,\alpha$,
	\[
	e_ie_j=\delta_{ij}, \quad e_i \alpha=\delta_{i,t(\alpha)}\alpha, \quad \alpha e_j = \delta_{j,s(\alpha)} \alpha.
	\]
	Note that the definiton of path algebra in \cite{ASS} is the opposite algebra of $\kk Q$.
	\end{definition}

	\begin{fact} \label{fct:collection}
		Let $A$ be a finite-dimensional $\kk$-algebra.
		\begin{enumerate}
		\item \cite[Prob. 5.6]{Sch} The following functors are mutually inverse contravariant equivalences
		\begin{align*}
		D(-)&:= \Hom_{\kk}(-,\kk): \bmod\dash A \rightarrow A \dash \bmod, \\
		D(-)&:= \Hom_{\kk}(-,\kk): A \dash \bmod \rightarrow \bmod\dash A.
		\end{align*} 
		
		\item \label{itm:3} \cite[Prob. 5.6]{Sch} $D$ gives duality between injective left modules and projective right modules (and vice versa). This is a consequence of $\Ext^1_A(DX,DY) \cong \Ext^1_A(Y,X).$
		
		\item \label{fct:injinddual} \cite[Prob. 5.6, Prop. 5.7]{Sch} Define the Nakayama functor $\nu(-) = D \circ \Hom_A(-,A).$ Now suppose that $A=\kk Q/		I$ is a bound quiver algebra. We define $I_i:=\nu(Ae_i) = D(e_iA).$ Then $I_i$ is an indecomposable injective left $A$-module.
		
		\item  \label{prp:dimhoms1} \cite[Lem. 1.7.5,1.7.7]{Benson}
		Let $A=\kk Q/I$ be a bound quiver algebra. Then 
		$$\dim \Hom_A(P_i,S_j)= \delta_{ij}=\dim\Hom_A(S_i, I_j).$$
		
		\item \label{prp:dimhoms2} \cite[Prob. 2.8, Cor. 2.12]{Sch}
		Let $A=\kk Q$ be a path algebra. Then
		\[
		\dim \Hom_A(I_j,I_i)= \text{ the number of paths from }i\rightarrow j = \dim \Hom_A(P_j,P_i).
		\]
		
		\item \label{itm:dimextsimples} \cite[Lem. III.2.12(b)]{ASS}
		Let $A=\kk Q/I$ be a bound quiver algebra. Then
		\begin{align*}
		\dim \Ext^1_A(S_i,S_j)= \text{ the number of arrows from } i \rightarrow j.
		\end{align*}
		
		\item \label{itm:8} \cite[Defn. II.1.2, Thm. III.1.6]{ASS} Let $A= \kk Q/I$ be a bound quiver algebra. There exists an equivalence of categories $A \dash \bmod \cong \Rep_{\kk}(Q,I).$ We will freely identify these categories throughout the rest of the paper.
		
		\item \label{prp:equiv} \cite[4.3, Ex. 4.16]{Fuller}
		Let $A$ be a $\kk $-algebra and let $I$ be an ideal. Then there is an equivalence of categories between $\Ccal=A/I\dash\bmod$ and $\Dcal,$ which is defined to be the full subcategory of $A\dash\bmod$ whose objects are those $A$-modules which are annihilated by $I$.
		
		\item \label{prp:notannihgeqn} 
		Let $Q$ be a finite quiver and let $V=(V_i, f_{\alpha}) \in \Rep_{\kk }(Q)$. Suppose $\alpha_1 \ldots \alpha_n$ is a non-zero path, where each $\alpha_i$ is an edge of $Q$. Let $I$ be an ideal of $\kk Q$ containing $\alpha_1 \ldots \alpha_n$. If $f_{\alpha_1} \circ \cdots \circ f_{\alpha_n}$ is non-zero, then $V$ is not annihilated by $I$. This is a consequence of \cref{itm:8}.
			
		\item \label{prp:indreps} \cite[Cor. 1.7.3]{Drozd}
		Suppose that $Q$ is a finite quiver with an indecomposable representation $V=(V_i, f_{\beta}) \in \Rep_{\kk }(Q)$. If $\alpha$ is an edge of $Q$ such that $Q - \alpha$ is disconnected, and $V_{t(\alpha)} \not =0 \not = V_{s(\alpha)},$ then $f_{\alpha}\not =0$.
		
		\item \label{prp:projinjrepQ} \cite[Lem. 2.4, 2.6]{ASS}
		Let $A=\kk Q/(\geq 2)$ be a radical square zero algebra described by a finite quiver $Q$ such that $e_jAe_i$ is one-dimensional for all $j$. Then $P_i=Ae_i$ corresponds to the representation $(V_j, f_{\alpha})$, where 
		\begin{align*}
		V_j&=\begin{cases}
			&\kk \text{ if } j=i \\
			&\kk \text{ if } j=t(\alpha) \text{ for some arrow }\alpha \text{ with source }i	 \\
			& 0 \text{ otherwise}	
			\end{cases}, \\
		f_{\alpha} &=	\begin{cases}
					&1 \text{ if } V_{s(\alpha)}=\kk =V_{t(\alpha)} \\
					& 0 \text{ otherwise}
					\end{cases}.
		\end{align*}
		
		Similarly, $I_i=D(e_iA)$ corresponds to the representation $(W_j, g_{\beta})$, where 
		\begin{align*}
		W_j&=\begin{cases}
			&\kk  \text{ if } j=i \\
			&\kk  \text{ if } j=s(\beta)\text{ for some arrow }\beta \text{ with target }i	 \\
			& 0 \text{ otherwise}	
			\end{cases}, \\
		g_{\beta} &=	\begin{cases}
					&1 \text{ if } W_{s(\beta)}=\kk =W_{t(\beta)} \\
					& 0 \text{ otherwise}
					\end{cases}.
		\end{align*}
		\end{enumerate}
		\end{fact}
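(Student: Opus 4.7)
The plan is to construct the family explicitly as the two-vertex radical-square-zero algebras of $\widetilde{A}_1$ type with loops, and compute the $\fpd$ via \cref{lem:quivpowerful}. Specifically, for positive integers $n,m$, let $Q_{n,m}$ be the quiver on two vertices $1,2$ with arrows $x\colon 1\to 2$, $y\colon 2\to 1$, together with $n$ loops at vertex $1$ and $m$ loops at vertex $2$, and set $A_{n,m}:=\kk Q_{n,m}/(\geq 2)$.

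The next step is to apply \cref{lem:quivpowerful} to reduce the classification of brick $A_{n,m}$-modules (with their $A$-module Hom spaces) to the classification of brick $C$-modules annihilated by $(\geq 2)$, where $C=\kk Q'$ and $Q'$ is the loop-free subquiver $1\rightleftarrows 2$. Equivalently, one analyzes the bricks of $B=\kk Q'/(\geq 2)$. Since $B$ is a radical-square-zero algebra whose separated quiver is a disjoint union of two copies of $A_2$, its indecomposables are precisely $S_1,S_2,P_1=I_2,P_2=I_1$, and all four are bricks. A direct Hom computation (or \cref{prp:dimhoms1}) shows that among multi-element brick sets the only one is $\{S_1,S_2\}$, because morphisms like $P_i\to S_i$ and $S_j\to P_i$ are generically nonzero.

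With the brick classification in hand, the plan is to compute the adjacency matrix of each brick set using \cref{itm:dimextsimples} applied to $A_{n,m}$ itself (loops included). The brick set $\{S_1,S_2\}$ gives
\[
A(\phi)=\begin{pmatrix} n & 1 \\ 1 & m \end{pmatrix},
\qquad
\rho(A(\phi))=\frac{n+m+\sqrt{(n-m)^2+4}}{2},
\]
while the singleton sets $\{S_1\},\{S_2\}$ contribute spectral radii $n$ and $m$, each dominated by $\rho(A(\phi))$ above. The remaining task is to bound the $\Ext^1_A$ contributions from the singletons $\{P_i\}$ — computing $\dim\Ext^1_A(P_i,P_i)$ directly via minimal projective resolutions over $A_{n,m}$ — and to verify that these spectral radii also fall below $\rho(A(\phi))$. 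This step, the verification that $\{S_1,S_2\}$ realizes the supremum, is the main technical obstacle; I would handle it by explicit computation using the radical-square-zero structure of $A_{n,m}$.

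Once that is in place, one concludes $\fpd(A_{n,m}\dash\bmod)=\frac{n+m+\sqrt{(n-m)^2+4}}{2}$. Choosing $m=n+1$ (or more generally any $n\neq m$) makes $(n-m)^2+4$ a non-square integer, so the value $\frac{2n+1+\sqrt{5}}{2}$ is irrational. Letting $n\to\infty$ yields the desired family of finite-dimensional radical-square-zero algebras with irrational $\fpd$ values tending to infinity, proving the proposition.
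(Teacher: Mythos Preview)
Your proposal does not address the stated Fact at all: \cref{fct:collection} is a compilation of cited results from the literature and carries no proof in the paper. What you have written is instead a proof of \cref{prp:Qnm}.

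Evaluated as a proof of \cref{prp:Qnm}, your approach is correct and essentially identical to the paper's: the same two-vertex quiver $Q(n,m)$ with arrows $x\colon 1\to 2$, $y\colon 2\to 1$ and $n,m$ loops (this is exactly \cref{ex:Qnm}), the same reduction via \cref{lem:quivpowerful} to bricks over $B=\kk Q'/(\geq 2)$, the same list of four bricks $S_1,S_2,M_1,M_2$, the same identification of $\{S_1,S_2\}$ as the controlling brick set with adjacency matrix $\begin{pmatrix} n & 1\\ 1 & m\end{pmatrix}$, and the same specialization $m=n+1$ giving $\fpd = N + \tfrac{1+\sqrt{5}}{2}$. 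The only minor difference is that where you flag the vanishing of $\Ext^1_A(\tilde P_i,\tilde P_i)$ as a technical obstacle to be handled by direct resolution, the paper simply invokes \cref{lem:rad}\cref{lem:radproj}, which gives $\Ext^1_A(\tilde P_i,\tilde P_i)=0$ immediately since $\tilde P_i\not\cong S_i$.
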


We need the following results for our calculations. 

	\begin{lemma} \label{lem:rad}
	Assume \cref{hyp:quiver}. 
	
	\begin{enumerate}
	\item \label{lem:radproj} If $\tilde{P}_i, \tilde{P}_j$ are in the same brick set in $A\dash\bmod$, then we have $\Ext^1_A(\tilde{P}_i,\tilde{P}_j)=0$ if $i\not = j$. If $\tilde{P}_i \not \cong S_i$, then $\Ext^1_A(\tilde{P}_i,\tilde{P}_i)=0.$
	
	\item \label{lem:radinj} Dually, if $\tilde{I}_i, \tilde{I}_j$ are in the same brick set, then we have $\Ext^1_A(\tilde{I}_i,\tilde{I}_j)=0$ if $i\not = j$. If $\tilde{I}_i \not \cong S_i$, then $\Ext^1_A(\tilde{I}_i,\tilde{I}_i)=0.$
	
	\item \label{lem:simproj}
		If $i \not = j$ , then we have $\Ext^1_A(\tilde{P}_i,S_j)=0.$
	
	\item \label{lem:siminj}
		Dually, if $i \not = j$, $\Ext^1_A(S_i,\tilde{I}_j)=0.$
	\end{enumerate}
	\end{lemma}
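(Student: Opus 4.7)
The strategy is to compare the $B$-module $\tilde{P}_i$ with the indecomposable projective $A$-module $P_i$, and dually $\tilde{I}_i$ with $I_i$, via two short exact sequences that isolate the loop contributions. The span $L_i \subset P_i = Ae_i$ of $\{a_i^1,\ldots,a_i^{N_i}\}$ is an $A$-submodule isomorphic to $S_i^{N_i}$ (each loop generates a copy of $S_i$ since $A$ is radical square zero), and the quotient is $Be_i = \tilde{P}_i$, giving
\[
0 \to S_i^{N_i} \to P_i \to \tilde{P}_i \to 0 \qquad \text{in } A\dash\bmod.
\]
Dualizing the analogous right-module sequence $0 \to S_i^{N_i} \to e_i A \to e_i B \to 0$ under $D = \Hom_\kk(-,\kk)$ yields
\[
0 \to \tilde{I}_i \to I_i \to S_i^{N_i} \to 0 \qquad \text{in } A\dash\bmod.
\]
The plan is to use the first sequence for (1) and (3) and the second for (2) and (4).

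For (1), apply $\Hom_A(-,\tilde{P}_j)$ and use projectivity of $P_i$ to identify
\[
\Ext^1_A(\tilde{P}_i, \tilde{P}_j) \cong \operatorname{coker}\bigl(\Hom_A(P_i,\tilde{P}_j) \to \Hom_A(S_i^{N_i},\tilde{P}_j)\bigr).
\]
The connecting restriction map is identically zero: a homomorphism $f\colon P_i \to \tilde{P}_j$ is determined by $f(e_i)$, and $f(a_i^l) = a_i^l \cdot f(e_i) = 0$ because the loops in $A$ annihilate the $B$-module $\tilde{P}_j$. Hence $\Ext^1_A(\tilde{P}_i,\tilde{P}_j) \cong \Hom_A(S_i,\tilde{P}_j)^{N_i}$, which equals the multiplicity of $S_i$ in $\operatorname{soc}(\tilde{P}_j)$. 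Since $B$ is radical square zero, that socle is $S_j$ when $\tilde{P}_j = S_j$, and otherwise equals $\operatorname{rad}(\tilde{P}_j) = \bigoplus_{\alpha \in Q',\, s(\alpha) = j} S_{t(\alpha)}$. The brick-set hypothesis forces $\Hom_A(\tilde{P}_j,\tilde{P}_i) = 0$ for $i \ne j$; via \cref{prp:equiv} this reads as $\Hom_B(\tilde{P}_j,\tilde{P}_i) = 0$, which by the representation description in \cref{prp:projinjrepQ} rules out arrows $j \to i$ in $Q'$, so the multiplicity of $S_i$ vanishes. For the diagonal clause, if $\tilde{P}_i \not\cong S_i$ then $\operatorname{soc}(\tilde{P}_i) = \operatorname{rad}(\tilde{P}_i)$ contains no copy of $S_i$ since $Q'$ has no loops.

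Part (3) drops out of the same setup: applying $\Hom_A(-,S_j)$ for $i \ne j$ annihilates both $\Hom_A(P_i,S_j)$ and $\Hom_A(S_i^{N_i},S_j)$ by \cref{prp:dimhoms1}, so the cokernel is zero. Parts (2) and (4) are dual: apply $\Hom_A(\tilde{I}_i,-)$ (respectively $\Hom_A(S_i,-)$) to the injective sequence, and use injectivity of $I_j$ to kill $\Ext^1_A(-,I_j)$. The analog of the ``loops annihilate $\tilde{P}_j$'' observation is that any $A$-homomorphism from a $J$-annihilated module (such as $\tilde{I}_i$ or $S_i$) into $I_j$ has $J$-annihilated image, hence factors through $\tilde{I}_j = \{y \in I_j : Jy = 0\}$, which is precisely the kernel of $I_j \twoheadrightarrow S_j^{N_j}$. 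The connecting map therefore vanishes, and the remaining Hom computation reduces to reading off multiplicities in the top of $\tilde{I}_i$, handled symmetrically.

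The main subtlety is translating the brick-set hypothesis into the absence of arrows between $i$ and $j$ in $Q'$, which uses \cref{prp:equiv} to pass from $\Hom_A$ to $\Hom_B$ and then the explicit representation of the indecomposable $B$-projectives (and injectives) from \cref{prp:projinjrepQ}. The rest is routine homological bookkeeping with the two distinguished short exact sequences.
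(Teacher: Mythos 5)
Your proof is correct and takes a genuinely cleaner route than the paper's. The paper constructs a three-term projective resolution of $\tilde{P}_i$ and computes the first cohomology of $\Hom_A(-,\tilde{P}_j)$ applied to it, splitting into cases (for $i\ne j$ it shows $\Hom_A(P_i,\tilde{P}_j)=0$; for $i=j$ it does an explicit element chase to show $\ker\psi_n^*=0$). You instead truncate at the short exact sequence $0\to S_i^{N_i}\to P_i\to\tilde{P}_i\to 0$, pass to the long exact sequence in $\Ext_A(-,\tilde{P}_j)$, and observe that the restriction map to $\Hom_A(S_i^{N_i},\tilde{P}_j)$ vanishes because the loops kill any $B$-module; this immediately identifies $\Ext^1_A(\tilde{P}_i,\tilde{P}_j)\cong\Hom_A(S_i,\tilde{P}_j)^{N_i}$ and reduces both the diagonal and off-diagonal cases to a single socle-multiplicity computation. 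The dual short exact sequence $0\to\tilde{I}_j\to I_j\to S_j^{N_j}\to 0$, together with the factorization of any $J$-killed source through $\tilde{I}_j=\{y\in I_j:Jy=0\}$, handles (2) and (4) uniformly. This is more conceptual and avoids the paper's hands-on kernel calculation. One small slip to fix: in the off-diagonal part of (1) you cite $\Hom_A(\tilde{P}_j,\tilde{P}_i)=0$ to rule out arrows $j\to i$ in $Q'$, but under the paper's convention ($\alpha\in e_{t(\alpha)}Ae_{s(\alpha)}$) it is $\Hom_B(\tilde{P}_i,\tilde{P}_j)\cong e_iBe_j$ that counts arrows $j\to i$, so the correct reference is $\Hom_A(\tilde{P}_i,\tilde{P}_j)=0$. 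Since the brick-set hypothesis supplies both vanishings, the conclusion is unaffected, but the justification should cite the right Hom space.
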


	\begin{proof} 
	
	We will prove \cref{lem:radproj} and \cref{lem:simproj}, since the other results are dual.
	\vsp
	
		\cref{lem:radproj} Fix a vertex $i$. Let $P_i=Ae_i$ be the indecomposable projective $A$-module at vertex $i$. Notice that  $\tilde{P}_i \cong P_i/J P_i$ as $A$-modules. Let $a_1, \ldots, a_N$ be a complete list of the loops in $Q$ with source $i$, and let $x_1, \ldots, x_M$ be a complete list of arrows that are not loops with source $i$ in $Q$. Note that for $1 \leq m \leq M, 1\leq n \leq N$, $\kk x_m \cong S_{{t(x_m)}}$ and $\kk a_n \cong S_i.$ Then the following is a projective resolution of $\tilde{P}_i$ in $A$.
		\[
		\begin{tikzcd}
			\bigoplus_{n=1}^N\l(\bigoplus_{m=1}^M P_{t(x_m)} \bigoplus P_i^{\oplus N}\r) \arrow[r, "\psi"] \arrow[d] & P_i^{\oplus N} \arrow[r, "\varphi"] \arrow[d] & P_i \arrow[r] & \tilde{P}_i \arrow[r]  & 0 \\
			\bigoplus_{n=1}^N \l(\bigoplus_{m=1}^M \kk x_m \bigoplus_{l=1}^N \kk a_l\r) \arrow[hookrightarrow]{ru} & \bigoplus_{n=1}^N \kk a_n \arrow[hookrightarrow]{ru}  &  & &
		\end{tikzcd}
		\]
	
		We need to take first homology of the following sequence:
		$$0 \rightarrow \Hom_A(P_i, \tilde{P}_j) \xrightarrow{\varphi^*} \bigoplus_{n=1}^N \Hom_A(P_i, \tilde{P}_j) \xrightarrow{\psi^*} \bigoplus_{n=1}^N \Hom_A\l(\bigoplus_{m=1}^M P_{t(x_m)} \bigoplus P_i^{\oplus N}, \tilde{P}_j\r).$$
		
		\vsp
	
		\emph{Case 1: $i \not = j$.}
		
		We will show that $\Hom_A(P_i, \tilde{P_j})=0;$ therefore, $\Ext^1_A(\tilde{P}_i,\tilde{P}_j) =0$.
		
		By \cref{prp:onetooneJ}, $\Hom_A(\tilde{P}_i,\tilde{P}_j) \cong \Hom_B(\tilde{P}_i, \tilde{P}_j).$ Then
		\[
		\dim \Hom_A(P_i, \tilde{P}_j) = \dim e_i \tilde{P}_j = \dim e_i B e_j = \dim \Hom_B(\tilde{P}_i, \tilde{P}_j) = \dim \Hom_A(\tilde{P}_i,\tilde{P}_j) =0,
		\]
		where the last equality holds because $\tilde{P}_i, \tilde{P}_j$ are in the same brick set.
		
		\vsp
	
		\emph{Case 2: $i=j$.}
		
		Notice that $\psi^*=\oplus_{n=1}^N \psi_n^*$, where $\psi_n^*: \Hom_A(P_i, \tilde{P}_j) \rightarrow \Hom_A\l(\bigoplus_{m=1}^M P_{t(x_m)} \bigoplus P_i^{\oplus N}, \tilde{P}_j\r).$ We will show that $\ker \psi_n^*=0$ for each $n$. Therefore, $\Ext^1_A(\tilde{P}_i,\tilde{P}_j)=0$.  
		
		$$\ker \psi_n^* = \l\{ g: P_i \rightarrow \tilde{P}_j : \quad \l[ \bigoplus_{m=1}^M P_{t(x_m)} \bigoplus_{l=1}^N P_i \xrightarrow{\psi_n} P_i \xrightarrow{g} \tilde{P}_j \r]=0 \r\} $$
	
		Since $\im \psi_n = \bigoplus_{m=1}^M \kk x_m \bigoplus_{l=1}^N \kk a_l,$ we have
	
		$$\ker \psi_n^* =  \l\{ g: P_i \rightarrow \tilde{P}_j :  \quad g(x_m)=0=g(a_l), \quad 1 \leq m \leq M,  1 \leq l \leq N \r\} $$
	
		Let $g \in \ker \psi_n^*.$ Since as a vector space, we have 
		$$\tilde{P}_i=\frac{\kk e_i \oplus \kk x_1 \oplus \cdots \oplus \kk x_M \oplus \kk a_1 \oplus \cdots \oplus \kk a_N}{\kk a_1 \oplus \cdots \oplus \kk a_N},$$
		we have that
		$$g(e_i) = [\alpha e_i + \beta_1 x_1 + \cdots + \beta_N x_M]$$
		for some $\alpha, \beta_1, \ldots, \beta_M \in \kk $. 
			
		We can see that $[\beta_1 x_1 + \cdots + \beta_M x_M]=0$ by noting that since none of the $x_k$ have target $i$,
		$$[\alpha e_i]=e_i[\alpha e_i +\beta_1 x_1 + \cdots + \beta_M x_M]=e_ig(e_i)=g(e_i)=[\alpha e_i +\beta_1 x_1 + \cdots + \beta_M x_M].$$
	
		There exists at least one non-loop arrow $x_j$ with source $i$ because $\tilde{P}_i \not \cong S_i$. Hence 
		$$0=g(x_j)=x_jg(e_i)=x_j[\alpha e_i]=[\alpha x_j] \Rightarrow \alpha=0.$$
	
		Then $g(e_i)=0,$ so $g=0$ and hence $\ker \psi_n^*=0$.
		
		\vsp

 		\cref{lem:simproj} We will use the same projective resolution of $\tilde{P}_i$ as the proof of \cref{lem:radproj} and the same notation.
	
		\[
		\begin{tikzcd}
			\bigoplus_{n=1}^N\l(\bigoplus_{m=1}^M P_{t(x_m)} \bigoplus P_i^{\oplus N}\r) \arrow[r, "\psi"] \arrow[d] & \bigoplus_{n=1}^N P_i \arrow[r, "\varphi"] \arrow[d] & P_i \arrow[r] & \tilde{P}_i \arrow[r]  & 0 \\
			\bigoplus_{n=1}^N \l(\bigoplus_{m=1}^M \kk x_m \bigoplus_{l=1}^N \kk a_l\r) \arrow[hookrightarrow]{ru} & \bigoplus_{n=1}^N \kk a_n \arrow[hookrightarrow]{ru}  &  & &
		\end{tikzcd}
		\]
	
		Apply $\Hom_A(-, S_j):$
		$$0 \rightarrow \Hom_A(P_i, S_j) \xrightarrow{\varphi^*} \bigoplus_{n=1}^N \Hom_A(P_i, S_j) \xrightarrow{\psi^*} \bigoplus_{n=1}^N \Hom_A\l(\bigoplus_{m=1}^M P_{t(x_m)} \bigoplus P_i^{\oplus N}, S_j\r).$$
	
		Then $\Ext^1_A(\tilde{P}_i,S_j)=0$ since $\Hom_A(P_i, S_j)=0$ if $i \not = j$ (\cref{fct:collection}). 
		\end{proof}
		
				\begin{lemma}\label{lem:ext1projnohom}
		Assume \cref{hyp:quiver}. Let $\tilde{P_i}$ denote the indecomposable projective $B$-module at vertex $i$. Suppose that $M$ is an $A$-module such that $\Hom_A(P_i,M)=0.$ Then 
		\[
		\Ext^1_A(\tilde{P_i},M)=0.
		\]
		\end{lemma}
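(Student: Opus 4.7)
The plan is to reuse, almost verbatim, the projective resolution of $\tilde{P}_i$ constructed in the proof of \cref{lem:rad}\cref{lem:radproj}. With the same notation as there ($a_1, \ldots, a_N$ the loops at $i$ and $x_1, \ldots, x_M$ the non-loop arrows with source $i$), we have the exact sequence
\[
\bigoplus_{n=1}^N \left( \bigoplus_{m=1}^M P_{t(x_m)} \oplus P_i^{\oplus N} \right) \xrightarrow{\psi} P_i^{\oplus N} \xrightarrow{\varphi} P_i \longrightarrow \tilde{P}_i \longrightarrow 0
\]
of $A$-modules, where each $P_k$ is projective. This is genuinely a projective resolution in $A\dash\bmod$ (not merely in $B\dash\bmod$), which is the key point.

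Next I would apply $\Hom_A(-,M)$ to the truncated resolution to obtain the complex
\[
0 \longrightarrow \Hom_A(P_i, M) \xrightarrow{\varphi^*} \bigoplus_{n=1}^N \Hom_A(P_i, M) \xrightarrow{\psi^*} \bigoplus_{n=1}^N \Hom_A\!\left( \bigoplus_{m=1}^M P_{t(x_m)} \oplus P_i^{\oplus N},\, M \right)
\]
whose first cohomology computes $\Ext^1_A(\tilde{P}_i, M)$. The hypothesis $\Hom_A(P_i,M)=0$ forces the middle term to vanish identically, so $\ker \psi^* = 0$ and hence $\Ext^1_A(\tilde{P}_i, M) = 0$.

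In short, there is no real obstacle once the resolution from \cref{lem:rad} is in hand: the argument is immediate, since the first syzygy of $\tilde{P}_i$ over $A$ is built entirely out of copies of $P_i$, and $\Hom_A(P_i,M)$ is assumed to vanish. The only thing to verify carefully is that the resolution we use is a resolution by projective $A$-modules (not $B$-modules), which is already established in the proof of \cref{lem:rad}\cref{lem:radproj}.
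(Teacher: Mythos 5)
Your proof is correct and takes essentially the same approach as the paper: the paper also uses the projective presentation $P_i^{\oplus N} \to P_i \to \tilde{P}_i \to 0$ (a truncation of the resolution from \cref{lem:rad}) and concludes from $\Hom_A(P_i,M)=0$ that $\Ext^1_A(\tilde P_i,M)$, being a subquotient of $\Hom_A(P_i^{\oplus N},M)$, must vanish. The only cosmetic difference is that you carry along one extra term of the resolution, which is harmless.
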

		
		\begin{proof}
		Let $a_1, \ldots , a_N$ be a complete list of loops at $i$. First we construct a projective resolution of $\tilde{P_i}$. As a vector space,
		\[ 
		\tilde{P_i}\cong\frac{P_i}{\kk a_1 \oplus \cdots \oplus \kk a_N}.
		\]
		We have a projective resolution:
		\[
		\label{res:A4_12}
		\begin{tikzcd}
		\bigoplus_{i=1}^N  P_i \arrow[r, "\varphi"] \arrow[d] & P_i \arrow[r, "\pi"] & 	\tilde{P_i} \arrow[r] & 0 \\
		\kk a_1 \oplus \cdots \oplus \kk a_N \arrow[hookrightarrow]{ru} & {} & {} & {}
		\end{tikzcd}
		\]

		Now take $\Hom_A\l(-,M\r)$:
		\[
		\begin{tikzcd}
		0 \arrow[r] & \Hom_A\l(P_i,  M\r) \arrow[r, "\varphi^*"] & 			\bigoplus_{l=1}^L \Hom_A\l(P_i,  M\r)
		\end{tikzcd}
		\]

		Since $\Hom_A\l(P_i, M\r)=0,$ 
		\[
		\Ext^1_A \l( \tilde{P_i}, M \r)=0.
		\]
		\end{proof}
		
		\begin{definition}
		A vertex $i$ in a quiver is called a sink if there are no arrows $\alpha$ with $s(\alpha)=i$.
		\end{definition}
		
		\begin{lemma} \label{lem:sinknohom}
		Assume \cref{hyp:quiver}. Let $S_i$ denote the simple $A$-module at vertex $i$. If $i$ is a sink in the quiver $Q'$ describing $B$, and if $M$ is an $A$-module such that $\Hom_A(P_i, M)=0$, then 
		\[
		\Ext^n_A(S_i, M)=0, \: \forall n.
		\]
		\end{lemma}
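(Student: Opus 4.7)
The plan is to build an explicit projective resolution of $S_i$ in $A\dash\bmod$ whose every term is a direct sum of copies of $P_i$, then apply $\Hom_A(-,M)$ to obtain the zero complex.

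First I analyze $P_i=Ae_i$. Since $i$ is a sink in $Q'$ and $A$ is radical square zero, the only arrows of $Q$ with source $i$ are the loops $a_1,\ldots,a_N$ at $i$. Hence $P_i$ has $\kk$-basis $\{e_i,a_1,\ldots,a_N\}$, and its radical $\operatorname{rad}(P_i)=\bigoplus_{l=1}^N \kk a_l$ is a direct sum of simple $A$-submodules each isomorphic to $S_i$: indeed each $a_l$ is killed by the radical of $A$ by the square-zero condition and is fixed by $e_i$. This yields a short exact sequence
\[
0\to S_i^{\oplus N}\to P_i\to S_i\to 0.
\]

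Next I iterate. The projective cover of the kernel $S_i^{\oplus N}$ is $P_i^{\oplus N}$, with map into $P_i$ sending the $l$-th copy of $e_i$ to $a_l$; its kernel is in turn $\operatorname{rad}(P_i)^{\oplus N}\cong S_i^{\oplus N^2}$ by exactly the same calculation. Continuing inductively, I obtain a projective resolution
\[
\cdots\to P_i^{\oplus N^k}\to\cdots\to P_i^{\oplus N}\to P_i\to S_i\to 0.
\]
Applying $\Hom_A(-,M)$ produces a complex whose $k$-th term is $\Hom_A(P_i,M)^{\oplus N^k}$, which vanishes by the hypothesis $\Hom_A(P_i,M)=0$. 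The cohomology of the zero complex is zero in every degree, so $\Ext^n_A(S_i,M)=0$ for all $n\geq 0$.

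The only nonroutine step is verifying that $\operatorname{rad}(P_i)\cong S_i^{\oplus N}$, which relies crucially on the sink hypothesis together with the radical-square-zero structure; without the sink hypothesis, arrows leaving $i$ to other vertices would contribute summands isomorphic to simples $S_j$ with $j\neq i$, and the resolution would cease to be a string of copies of $P_i$, causing the argument to break past the first step (as it already does in \cref{lem:ext1projnohom}).
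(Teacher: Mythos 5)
Your proof is correct and follows essentially the same strategy as the paper: use the sink hypothesis to see that $\operatorname{rad}(P_i)\cong S_i^{\oplus N}$, iterate to obtain a projective resolution of $S_i$ every term of which is a finite direct sum of copies of $P_i$, and then apply $\Hom_A(-,M)$ to conclude. Your explicit identification of each kernel with a direct sum of copies of $S_i$ makes the inductive step a bit more transparent, but the resolution you build is the same one the paper writes down.
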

		

		\begin{proof}
		
		Let $P_i$ denote the indecomposable projective at vertex $i$ in $A$. Let $a_1, \ldots, a_N$ denote the loops with source $i$. If $i$ is a sink in $Q'$, then as a vector space, $\ker ( P_i \rightarrow S_i) = \kk a_1 \oplus \cdots \oplus \kk a_N$. We have the following projective resolution of $S_i$:
		
\[
\begin{tikzcd}
\cdots \arrow[r] & (P_i^{\oplus N})^{\oplus N^2} \arrow[r] \arrow[d] & (P_i^{\oplus N})^{\oplus N} \arrow[r] \arrow[d] & P_i^{\oplus N} \arrow[r] \arrow[d] & P_i \arrow[r, "\pi"] & S_i \arrow[r] & 0 \\
{} & (\kk a_1 \oplus \cdots \oplus \kk a_N)^{\oplus N^2} \arrow[hookrightarrow]{ru} & (\kk a_1 \oplus \cdots \oplus \kk a_N)^{\oplus N} \arrow[hookrightarrow]{ru} & \kk a_1 \oplus \cdots \oplus \kk a_N 	\arrow[hookrightarrow]{ru} & {} & {} & {}
\end{tikzcd}
\]

		We must take $\Hom_A(-,M)$ of this resolution. But since $\Hom_A(P_i,M)=0$, all the terms are $0$. So $\Ext^n_A(S_i, M)=0$ for any $n$.
		
		\end{proof}

		The following lemma comes in handy when calculating $\Ext^1$ groups.
				
		\begin{lemma} \label{lem:kerpsi}
		Assume \cref{hyp:quiver}. Let $M$ be an $A$-module which is annihilated by all of the loops in $A$, and let $P_i$ be the indecomposable projective $A$-module at vertex $i$. Let $\{L_1, \ldots, L_n\}$ be a complete list of loops of $A$ with source $i$. Then for any subset $\{i_1, \ldots, i_k\} \subset \{1, \ldots, n\}$,
		\[
		\l\{ g: P_i \rightarrow M : g(L_{i_1})=\cdots=g(L_{i_k})=0 \r\} = \Hom_A(P_i,M).
		\]
		\end{lemma}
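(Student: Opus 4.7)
The plan is to verify the non-trivial containment directly from the module structure. The inclusion $\{g: P_i \to M : g(L_{i_1})=\cdots=g(L_{i_k})=0\} \subseteq \Hom_A(P_i,M)$ is automatic from the definition of the set, so the content of the lemma is the reverse inclusion: every $A$-module homomorphism $g: P_i \to M$ automatically satisfies $g(L_{i_j}) = 0$ for each $j$.

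To show this, I would start with an arbitrary $g \in \Hom_A(P_i,M)$ and pick any $j \in \{1,\ldots,k\}$. Since $P_i = Ae_i$ and $L_{i_j}$ is a loop at vertex $i$, we have $L_{i_j} = L_{i_j}e_i$ inside $P_i$. Using $A$-linearity of $g$,
\[
g(L_{i_j}) = g(L_{i_j} \cdot e_i) = L_{i_j} \cdot g(e_i).
\]
Now $g(e_i)$ is an element of $M$, and by hypothesis $M$ is annihilated by every loop in $A$; in particular $L_{i_j} \cdot m = 0$ for all $m \in M$. Hence $L_{i_j}\cdot g(e_i) = 0$, so $g(L_{i_j}) = 0$, as desired.

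Since this argument works for every $g \in \Hom_A(P_i,M)$ and every index $i_j$ in the chosen subset, we obtain the equality of sets. There is no serious obstacle here; the only subtle point is remembering that $L_{i_j}$ is simultaneously an element of the algebra $A$ and an element of the module $P_i = Ae_i$, and that the $A$-linearity of $g$ lets us transport the multiplication by $L_{i_j}$ out of $P_i$ and onto $M$, where the annihilation hypothesis kills it.
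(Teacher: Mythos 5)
Your proof is correct and is essentially the same one-line argument the paper gives: $g(L_{i_j}) = L_{i_j}\cdot g(e_i) = 0$ because $M$ is annihilated by the loops. No differences worth noting.
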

		
		\begin{proof}
		For any $g:P_i \rightarrow M$, $g(L_j)=L_jg(e_i)=0,$	 since $L_j$ annihilates $M$.
		\end{proof}
		
		\vsp
		
		
We use the following proposition to show that certain indecomposable modules cannot be bricks.
		
		\begin{proposition}\label{prp:notindifannih}
		Let $Q$ be a finite quiver with vertices $i,j,k$ with arrows $\alpha: i \rightarrow j, \: \beta: j \rightarrow k$. Suppose that for any arrow $\gamma$, $j \not = s(\gamma)$ except when $\gamma=\beta$, and $j \not=t(\gamma)$ except when $\gamma=\alpha$. If $V=(V_m, f_{\gamma})$ is a representation of $Q$ such that $V_i=\kk^l, V_j=\kk^m, V_k=\kk^n$ for $l,n \geq 1$ and $m \geq 2$, $f_{\alpha} \not=0\not= f_{\beta},$ and $f_{\beta}\circ f_{\alpha}=0$, then $V$ is decomposable.
		\end{proposition}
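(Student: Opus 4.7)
The plan is to produce a nontrivial direct-sum decomposition $V = V^{(1)} \oplus V^{(2)}$ by splitting $V_j$ using the vanishing $f_\beta \circ f_\alpha = 0$ and then exploiting the hypothesis that $j$ is incident only to $\alpha$ and $\beta$ to propagate the splitting cleanly through the rest of $Q$. First, because $f_\beta f_\alpha = 0$ we have $\im f_\alpha \subseteq \ker f_\beta$; the non-vanishing of $f_\alpha$ forces $\ker f_\beta \neq 0$ while the non-vanishing of $f_\beta$ forces $\ker f_\beta \subsetneq V_j$, so I can fix a vector-space complement $W_2 \subseteq V_j$ with $V_j = \ker f_\beta \oplus W_2$ and both summands nonzero.

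The hypothesis on $j$ means that removing $j$ from $Q$ leaves a subquiver in which (in the applications of interest, e.g.\ the modified $ADE$ algebras of \cref{def:modifiedADE}) $i$ and $k$ sit in different connected components. Write $Q_i$ for the component of $Q \setminus \{j\}$ containing $i$ and $Q_k$ for the one containing $k$, and define candidate subrepresentations by
\[
V_l^{(1)} = \begin{cases} V_l & l \in Q_i \\ \ker f_\beta & l = j \\ 0 & l \in Q_k \end{cases},
\qquad
V_l^{(2)} = \begin{cases} 0 & l \in Q_i \\ W_2 & l = j \\ V_l & l \in Q_k \end{cases},
\]
so that $V_l = V_l^{(1)} \oplus V_l^{(2)}$ for every vertex $l$.

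Next I would verify that both $V^{(1)}$ and $V^{(2)}$ are closed under the structure maps. For the two arrows incident to $j$ this is built into the construction: $f_\alpha(V_i) = \im f_\alpha \subseteq \ker f_\beta$ sends $V_i^{(1)}$ into $V_j^{(1)}$, while $f_\beta(\ker f_\beta) = 0$ and $f_\beta(W_2) \subseteq V_k = V_k^{(2)}$. For every arrow $\gamma$ not incident to $j$, the source and target both lie entirely in $Q_i$ or entirely in $Q_k$, so $f_\gamma$ trivially preserves the decomposition. Both summands are nonzero since $V_i^{(1)} = V_i = \kk^l \neq 0$ and $V_k^{(2)} = V_k = \kk^n \neq 0$, giving the desired decomposition. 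The main obstacle is the geometric input in step two, namely that $Q \setminus \{j\}$ disconnects $i$ from $k$; this holds whenever the underlying graph of $Q$ is a tree (such as for the modified $ADE$ quivers) and is the essential reason the propagation works.
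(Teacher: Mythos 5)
Your proof takes essentially the same route as the paper's: both split $V_j = \ker f_\beta \oplus W_2$ (possible because $f_\beta f_\alpha = 0$ gives $\im f_\alpha \subseteq \ker f_\beta$, and $f_\alpha \neq 0 \neq f_\beta$ makes both summands proper and nonzero) and then propagate that splitting across the two pieces of $Q \setminus \{j\}$. The only difference is cosmetic: you exhibit the two direct summand subrepresentations $V^{(1)}, V^{(2)}$ explicitly, while the paper builds the idempotent endomorphism $\varphi$ projecting onto what you call $V^{(1)}$ and cites \cite[Cor.~1.7.3]{Drozd} to conclude decomposability. These are equivalent.

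One thing you do that the paper does not: you flag that the argument hinges on $i$ and $k$ lying in different connected components of $Q \setminus \{j\}$. The paper writes ``Notice that removing the vertex $j$ creates two disjoint quivers $Q', Q''$'' as if this followed from the stated hypothesis on $j$, but it does not --- the hypothesis only controls arrows incident to $j$, and nothing forbids, for instance, an extra arrow $\delta : i \to k$. With such a $\delta$ one can construct an indecomposable $V$ meeting every displayed condition (take $l = n = 1$, $m = 2$, $f_\alpha$ the inclusion of the first coordinate, $f_\beta$ projection onto the second, $f_\delta$ the identity), so the disconnection is genuinely an additional hypothesis. It is satisfied in every application in the paper because the underlying graphs there are trees, and your caveat that the argument is really about that case is exactly the right instinct.
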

		
		\begin{proof}
		Notice that removing the vertex $j$ creates two disjoint quivers $Q', Q''$, where $Q'$ contains vertex $i$ and $Q''$ contains vertex $k$. Let $V(Q'), V(Q'')$ denote the set of vertices of $Q',Q''$ respectively.
		
		First, here is a general picture of $Q$:
		\[
		\begin{tikzcd}
		\ddots \arrow[rd] & {} & {} & {} & \udots \\
		\cdots & i \arrow[l] \arrow[r, "\alpha"] & j \arrow [r, "\beta"] & k \arrow [ru] \arrow[r] & \cdots \\
		\udots \arrow[ru] & {} & {} & \vdots \arrow[u] & \ddots \arrow [lu]
		\end{tikzcd}
		\]
		Here is a general picture of $V=(V_a, f_{\gamma})$ (for simplicity, denote $f_{\alpha}$ by $f$ and $f_{\beta}$ by $g$):
		\[
		\begin{tikzcd}
		\ddots \arrow[rd] & {} & {} & {} & \udots \\
		\cdots & \kk^l \arrow[l] \arrow[r, "f"] & \kk^m \arrow [r, "g"] & \kk^n \arrow [ru] \arrow[r] & \cdots \\
		\udots \arrow[ru] & {} & {} & \vdots \arrow[u] & \ddots \arrow [lu]
		\end{tikzcd}
		\]
		Since $\im f \not =0$, $g\not=0$, and $g \circ f =0,$ we may choose a basis $\{e_1, \ldots, e_p, e_{p+1}, \ldots, e_m\}$ for $\kk^m$ such that $1 \leq p \leq m-1$ and
		\begin{align*}
		g(e_i)&=0 \text{ for }i=1, \ldots,p \\
		g(e_j)&\not=0 \text{ for }j=p+1, \ldots, m.
		\end{align*}
		
		Since $g \circ f=0, \im f \subset \kk e_1 \oplus \cdots \oplus \kk e_p$. Define a map of representations $\varphi: V \rightarrow V$, where for any $a_1, \ldots, a_m \in \kk$,
		\begin{align*}
		\varphi_a &= \Id_{V_a} \text{ if }a\in V(Q') \\
		\varphi_a &= 0 \text{ if } a \in V(Q'') \\
		\varphi_j&:\kk^m \rightarrow \kk^m \\
		&(a_1, \ldots, a_m)  \mapsto (a_1, \ldots, a_p, 0, \ldots, 0).
		\end{align*}
	
		Since $\varphi$ is a non-trivial idempotent in the endomorphism ring of $V$, $V$ is decomposable by \cite[Cor. 1.7.3]{Drozd}.	
		\end{proof}
	
		\vsp
		

	\section{Results for radical square zero algebras}
	
	We will apply \cref{lem:quivpowerful} to calculate the Frobenius-Perron dimension of the category of representations of a radical square zero bound quiver algebra. To prove \cref{lem:quivpowerful}, we need some additional results.
	
	\begin{lemma} \label{lem:powerful}
		Let $A$ be a finite-dimensional algebra over a field $\kk $. Then if $V$ is a brick object in $A\dash\bmod$ and $x \in Z(A)$, there exists $\lambda \in \kk $ such that for all $v\in V$, $x.v=\lambda v$.
	\end{lemma}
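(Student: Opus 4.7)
The plan is to observe that multiplication by a central element always defines an $A$-module endomorphism, and then to invoke the brick hypothesis to force this endomorphism to be a scalar.

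More precisely, I would first define the map $\mu_x: V \to V$ by $\mu_x(v) := x.v$. Because $x \in Z(A)$, for any $a \in A$ and $v \in V$ we have
\[
\mu_x(a.v) = x.(a.v) = (xa).v = (ax).v = a.(x.v) = a.\mu_x(v),
\]
so $\mu_x \in \End_A(V)$. Since $V$ is a brick, $\End_A(V) = \kk \cdot \Id_V$ by \cref{xxdef2.1}(2), so there exists $\lambda \in \kk$ with $\mu_x = \lambda \Id_V$, which says exactly that $x.v = \lambda v$ for every $v \in V$.

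There is essentially no obstacle here; the only thing to verify is the $A$-linearity of $\mu_x$, which is a one-line consequence of centrality. The lemma is really just the standard ``Schur-type'' observation that the center acts by scalars on any module with one-dimensional endomorphism algebra, adapted to the brick setting used throughout the paper.
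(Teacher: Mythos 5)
Your proof is correct and is essentially the same as the paper's: both hinge on observing that centrality of $x$ makes $\mu_x$ an $A$-module endomorphism, then invoking $\End_A(V)=\kk\cdot\Id_V$ from the brick hypothesis; the paper merely phrases this as a contrapositive ("if $\varphi_x$ is not a scalar, then $V$ is not a brick") rather than directly.
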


	\begin{proof}
		We will prove the contrapositive. Notice that if $x$ is a central element of $A$, then we have an induced endomorphism of any $A$-module $V$ defined by:
		$$\varphi_x: V \rightarrow V, \quad \varphi_x(v)=x.v.$$
		
		Suppose that there exists a central element $x$ of $A$ and a representation $V$ such that $\varphi_x \not=\lambda \Id_V$ for any $\lambda \in \kk $. Then $\varphi_x$ and $\Id_V$ are linearly independent elements of $\Hom_A(V,V),$ so $V$ is not a brick.
	\end{proof}
	
	\begin{proposition} \label{prp:annih}
		If $x$ is a central nilpotent element of $A$ and $V$ is a brick module, then the ideal $(x)$ annihilates $V$.
	\end{proposition}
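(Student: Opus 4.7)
The plan is to apply \cref{lem:powerful} directly and then leverage nilpotency to pin down the scalar. Since $x \in Z(A)$ and $V$ is a brick, \cref{lem:powerful} produces a scalar $\lambda \in \kk$ such that $x.v = \lambda v$ for every $v \in V$. The first step is therefore to record this $\lambda$.

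Next I would use the hypothesis that $x$ is nilpotent. Choose $n$ with $x^n = 0$. Then on one hand $x^n.v = 0$ for every $v \in V$, and on the other hand iterating the scalar action gives $x^n.v = \lambda^n v$. Since $V$ is nonzero (bricks are nonzero by definition in \cref{xxdef2.1}), we conclude $\lambda^n = 0$, and since $\kk$ is a field this forces $\lambda = 0$. Thus $x$ itself acts as zero on $V$.

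Finally, I would upgrade from ``$x$ annihilates $V$'' to ``$(x)$ annihilates $V$'' using centrality: because $x \in Z(A)$, the two-sided ideal $(x)$ equals $Ax = xA$, so an arbitrary element has the form $ax$ for $a \in A$, and $(ax).v = a.(x.v) = a.0 = 0$. Hence the entire ideal $(x)$ acts as zero on $V$, which is the claim.

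There is no real obstacle here; the proposition is essentially a corollary of \cref{lem:powerful} combined with the observation that the only nilpotent scalar in a field is zero. The only minor subtlety worth stating explicitly is the use of centrality to identify $(x)$ with $Ax$, since otherwise one would need to argue separately about left multiplication by $x$ inside arbitrary products.
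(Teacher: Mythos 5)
Your proposal is correct and follows essentially the same route as the paper's proof: invoke \cref{lem:powerful} to get a scalar $\lambda$, use nilpotency of $x$ to force $\lambda = 0$, and then use centrality to pass from $x$ annihilating $V$ to the ideal $(x)$ annihilating $V$. The only difference is that you spell out the last step (identifying $(x)$ with $Ax$) more explicitly, which the paper leaves as a remark.
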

	
	\begin{proof}
		By \cref{lem:powerful}, for all $v\in V$ we must have $x.v=\lambda v$ for some $\lambda \in \kk $. Suppose $x^n=0$. Then $0=x^n.v=\lambda^n v$ for every $v\in V$. Hence $\lambda =0$ and $x$ annihilates $V$. Since $x$ is central, the two-sided ideal generated by $x$ must also annihilate $V$.	
	\end{proof}
	
	\begin{corollary} \label{cor:annih}
		Let $J$ be an ideal generated by central nilpotent elements of $A$. If $V$ is a brick $A$-module, then $J$ annihilates $V$.
	\end{corollary}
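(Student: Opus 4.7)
The plan is to reduce the corollary directly to \cref{prp:annih} by treating the generators of $J$ one at a time. Suppose $J$ is generated (as a two-sided ideal) by a set $\{x_\alpha\}_{\alpha \in \Lambda}$ of central nilpotent elements of $A$. For each generator $x_\alpha$, \cref{prp:annih} already tells us that the two-sided ideal $(x_\alpha)$ annihilates the brick module $V$. The only remaining task is to assemble these annihilation statements into a single statement for $J$.

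To do this, I would write $J = \sum_{\alpha} (x_\alpha)$, the sum of the two-sided ideals generated by each $x_\alpha$, which holds by definition of the ideal generated by a set. Any element $y \in J$ can then be expressed as a finite sum $y = \sum_{\alpha} y_\alpha$ with $y_\alpha \in (x_\alpha)$. For any $v \in V$, linearity of the action gives
\[
y \cdot v = \sum_\alpha y_\alpha \cdot v = 0,
\]
since each $y_\alpha \cdot v = 0$ by \cref{prp:annih}. Hence $J$ annihilates $V$.

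I do not anticipate any real obstacle here, since \cref{prp:annih} does essentially all of the work; this corollary is just the observation that ``annihilates $V$'' is preserved under sums of two-sided ideals. The only point that needs a moment of care is confirming that the two-sided ideal generated by a set of central elements equals the sum of the two-sided ideals generated by each element individually, but this is immediate from the definition of the ideal generated by a subset.
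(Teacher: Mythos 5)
Your argument is correct and is exactly the intended route: the paper leaves the corollary unproved precisely because it follows from \cref{prp:annih} by the observation you make, namely that the ideal generated by a set equals the sum of the ideals generated by its elements, and annihilation is preserved under sums. No further comment is needed.
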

			
	\begin{proposition} \label{prp:onetooneJ}
		Let $J$ be an ideal generated by central nilpotent elements. Then there is a one-to-one correspondence between the isomorphism classes below:
		$$\{ \text{brick } A\dash modules \} \longleftrightarrow \{ \text{brick } A/J \dash modules \}$$
		Furthermore, if $M,N$ are brick $A/J$-modules, then we have a natural isomorphism
		\[
		\Hom_A(M,N) \cong \Hom_{A/J}(M,N).
		\]
		Specifically, if we define $J_{nc}$ to be the ideal generated by all central nilpotent elements, we get the one-to-one correspondence and natural isomorphism above.
	\end{proposition}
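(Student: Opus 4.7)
The plan is to combine Corollary \ref{cor:annih} with the equivalence of categories recalled in Fact \ref{fct:collection}(\ref{prp:equiv}). Let $\pi: A \to A/J$ be the canonical projection, and let $\Dcal \subset A\dash\bmod$ be the full subcategory of $A$-modules annihilated by $J$. Pullback along $\pi$ defines a fully faithful functor $F: A/J\dash\bmod \to A\dash\bmod$ whose essential image is exactly $\Dcal$; this is the equivalence cited in Fact \ref{fct:collection}(\ref{prp:equiv}). The key observation is that for $M, N \in \Dcal$, a $\kk$-linear map $M \to N$ is $A$-linear if and only if it is $A/J$-linear, so
\[
\Hom_A(M,N) = \Hom_{A/J}(M,N).
\]

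First I would show that $F$ restricts to a bijection on brick sets. If $M$ is a brick $A/J$-module, then $M \in \Dcal$ and the Hom-identity above gives $\End_A(F(M)) = \End_{A/J}(M) = \kk$, so $F(M)$ is a brick $A$-module. Conversely, if $M$ is a brick $A$-module, Corollary \ref{cor:annih} tells us that $J$ annihilates $M$, hence $M \in \Dcal$ and $M$ arises as $F(\overline{M})$ for a unique $A/J$-module $\overline{M}$. The same Hom-identity yields $\End_{A/J}(\overline{M}) = \End_A(M) = \kk$, so $\overline{M}$ is a brick $A/J$-module. These operations are mutually inverse and respect isomorphism, so they establish the claimed bijection.

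The Hom-statement for brick $A/J$-modules $M,N$ is then immediate: both $M$ and $N$ lie in $\Dcal$, so $\Hom_A(M,N) = \Hom_{A/J}(M,N)$, and naturality in $M$ and $N$ follows from the functoriality of the equivalence $F$.

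For the final assertion about $J_{nc}$, I just need to observe that $J_{nc}$ is by definition generated by central nilpotent elements, so it satisfies the hypothesis of the proposition directly; no separate argument is required. There is essentially no real obstacle here beyond bookkeeping: the content is contained in Corollary \ref{cor:annih} (which forces bricks to descend to $A/J$) and in the elementary fact that $A$-module homomorphisms between $J$-annihilated modules are automatically $A/J$-module homomorphisms.
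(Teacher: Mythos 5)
Your proof is correct and follows essentially the same route as the paper's: invoke Corollary~\ref{cor:annih} to show brick $A$-modules are killed by $J$, then use the equivalence from Fact~\ref{fct:collection}(\ref{prp:equiv}) to identify $\Hom_A$ and $\Hom_{A/J}$ on such modules, giving the bijection on bricks in both directions. The added explicitness about the functor $F$ and its essential image is fine but not a substantive departure.
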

	
	\begin{proof}
		Suppose that $M,N$ are brick $A$-modules. Then $J$ annihilates both $M$ and $N$ by \cref{cor:annih}. By \cref{fct:collection} \cref{prp:equiv} we have a natural isomorphism
		\[
		\Hom_A(M,N) \cong \Hom_{A/J}(M,N).
		\]
		Therefore $M,N$ are also brick $A/J$-modules.
		
		Now suppose that $L,P$ are brick $A/J$-modules. Then $L,P$ can be thought of as $A$-modules via \cref{fct:collection} \cref{prp:equiv}, and we have
		\[
		\Hom_{A/J}(L,P) \cong \Hom_A(L,P).
		\]
		Therefore $L,P$ are also brick $A$-modules.
	\end{proof}
	
	\begin{proof}[Proof of \cref{lem:quivpowerful}] 
	 First suppose that $L,P$ are brick $C$-modules annihilated by $(\geq 2)$. Then by \cref{fct:collection} \cref{prp:equiv} we have natural isomorphisms
	 \[
	 \Hom_C(L,P) \cong \Hom_{C/(\geq 2)}(L,P) \cong \Hom_B(L,P) = \Hom_{A/J}(L,P) \cong \Hom_A(L,P).
	 \]
	 Therefore $L,P$ are brick $A$-modules.
	 
	 Now suppose that $M,N$ are brick $A$-modules. Then we have natural isomorphisms
	 \begin{align*}
	 \Hom_A(M,N) & \cong \Hom_{A/J}(M,N) \quad (\text{\cref{prp:onetooneJ}}) \\
	 & =\Hom_B(M,N) \\
	 & \cong \Hom_{C/(\geq 2)}(M,N) \\
	 & \cong \Hom_C(M,N) \quad (\text{\cref{fct:collection} \cref{prp:equiv}}).
	 \end{align*}
	 Therefore, $M,N$ are brick $C$-modules.
	\end{proof}
	
	\vsp


\section{An example of arbitrarily large irrational Frobenius-Perron dimension}
	
	\begin{example} \label{ex:Qnm} Consider the following quiver $Q(n,m)$ with $n$ loops at vertex $1$ and $m$ loops at vertex $2$: 
		\[
		\begin{tikzcd}
			1  \arrow[out=155, in=205, loop] \arrow[out=162, in=198, loop]  \arrow[out=148, in=212, loop, swap, "a_i"] \arrow[r,bend left,"x"]  & 2 \arrow[l,bend left,"y"] \arrow[out=335, in=25, loop, swap, "b_j"] \arrow[out=342, in=18, loop]						\end{tikzcd} 
		\]
	
	Let $A=\kk Q(n,m)/(\geq 2).$ We want to calculate $\fpd \l(A\dash \bmod\r)$. We can see that $J=(a_1, \ldots, a_n, b_1, \ldots, b_m)$ is an ideal generated by central nilpotents. Let $B:=A/J$. Then $B$ is described by the following quiver with relations $(\geq 2)=0$:
	\[
	\begin{tikzcd}
		1  \arrow[r,bend left,"x"]  & 2 \arrow[l,bend left,"y"]			
	\end{tikzcd}
	\]
	
	By \cref{lem:quivpowerful}, we have a one-to-one correspondence between bricks of $A$ and bricks of $B$, as well as the following natural isomorphism for any $A$-modules $M,N$:
	\[
	\Hom_{A}(M,N) \cong \Hom_{B}(M,N).
	\]
	Hence it suffices to compute the dimensions of $\Hom_{A}$-spaces in $B$.
	
	The representation theory of $B$ is well-known \cite{CGWZZZ}. There are 4 non-isomorphic indecomposable modules. First, we have the two simples $S_1, S_2$, which are described by $S_i=Be_i/(\geq 1)$. Then we have the two indecomposable projectives $M_1,M_2$, where $M_i=Be_i$. $S_1,S_2$ are bricks since they are simples. We have $\Hom_{B}(M_i,M_i) =\Hom_{B}(Be_i, Be_i)\cong e_iBe_i \cong ke_i$, so each $M_i$ is a brick as well. Under the one-to-one correspondence, these map to bricks in $A$ where $S_i=Be_i/(\geq 1)=Ae_i/(\geq 1),$ which are simple in $A$, and $M_i=Be_i=Ae_i/Je_i$. 
	
	We know that $\Hom_A(M_i,M_j) \cong \Hom_{B}(M_i,M_j)=\Hom_{B}(Be_i,Be_j)\cong e_jBe_i,$ which is non-zero for $i\not= j$. So $M_1, M_2$ cannot be in the same brick set. Furthermore, we have the following short exact sequences:
	$$0 \rightarrow S_1 \rightarrow M_2 \rightarrow S_2 \rightarrow 0, \quad \quad
	0 \rightarrow S_2 \rightarrow M_1 \rightarrow S_1 \rightarrow 0.$$
	
	Hence we cannot have a brick set containing both $S_i, M_j$ for any $i,j$.
	
	Therefore there are only 5 possible brick sets:
	$\phi_{1}=\{M_1\},$ $\phi_2=\{M_2\}$, $\phi_3=\{S_1,S_2\},$ $\phi_{4}=\{S_1\},$ and $\phi_5=\{S_2\}.$ Since $\phi_4,\phi_5$ are subsets of $\phi_3$, $\rho(A(\phi_{(4 \text{ or }5)}))\leq \rho(A(\phi_3))$, so we do not need to consider them. We can compute the following.
	
	$\rho(A(\phi_1))=\Ext^1(M_1,M_1)=0$ by \cref{lem:rad}.
	
	$\rho(A(\phi_2))=\Ext^1(M_2,M_2)=0$ by \cref{lem:rad}.
	
	$\rho(A(\phi_3))=\rho 	\begin{pmatrix} n & 1 \\
								1 & m 
						\end{pmatrix} = \frac{1}{2}\l(\sqrt{(m-n)^2+4}+m+n\r)$ by \cref{fct:collection} \cref{itm:dimextsimples}.
	
	Then 
	$$\fpd \l(A\dash \bmod\r) = \sup \{ \rho(A(\phi_i)): 1\leq i \leq 3 \}= \frac{1}{2}\l(\sqrt{(m-n)^2+4}+m+n\r).$$ 
	
	\begin{proof}[Proof of \cref{prp:Qnm}] 
	We can get arbitrarily large irrational numbers using this example. Let $r \in \R^{\geq 0}$. Let $N$ be the smallest integer greater than or equal to $r$. Let $n=N, m=N+1.$ Then we have
	$$\fpd \l(A\dash\bmod\r) = \frac{1}{2}\l(\sqrt{(N+1-N)^2+4}+N+1+N\r)=\frac{1}{2}\l(2N+\sqrt{5}+1\r)= N + \frac{1+\sqrt{5}}{2} > N \geq r.$$
	\end{proof}
	
	\begin{question}
	What is $\fpd D^b(A\dash\bmod)$? Is it still irrational in the cases where $\fpd \l(A\dash\bmod\r)$ is irrational?
	\end{question}
	
	\end{example}

	\vsp
	

\section{Frobenius-Perron dimension of modified ADE bound quiver algebras}

	\subsection{$A(n)$ algebras}
	
	\begin{theorem}\label{thm:A_n1dir}
	
	Let $A(n)$ be the finite dimensional radical square zero algebra described by the following quiver $Q$ with $N_i$ loops (labeled $a_i^l$ for $l=1, \ldots, N_i$) at vertex $i$:
	
	\[
	\begin{tikzcd}
		1  \arrow[out=252, in=-72, loop] \arrow[out=245, in=-65, loop] \arrow[out=238, in=-58, loop, swap, "a_1^l"] \arrow[r, "x_2"] & 2 \arrow[out=245, in=-65, loop, swap, "a_2^l"] \arrow[r, "x_3"] &  \cdots \arrow[r, "x_{n}"] & n \arrow[out=245, in=-65, loop] \arrow[out=238, in=-58, loop, swap, "a_n^l"]
	\end{tikzcd} 
	\]	
	
	Then 
	\[
	\fpd \l( A(n)\dash\bmod \r)= \max\{N_1, \ldots, N_n\}.
	\]
	\end{theorem}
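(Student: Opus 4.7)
The plan is to apply \cref{lem:quivpowerful} to replace brick $A(n)$-modules with bricks over the path algebra $C=\kk Q'$ of the linear $A_n$ quiver that are additionally annihilated by $(\geq 2)$. I would first recall that the indecomposable (hence brick) $C$-modules are the interval representations $M_{[i,j]}$ supported on $\{i,i+1,\ldots,j\}$ with identity transition maps; among these, only those with $j \leq i+1$ are annihilated by $(\geq 2)$, since a longer interval produces a nonzero composition of two consecutive arrows on its support. Thus the bricks of $A(n)\dash\bmod$ are exactly the simples $S_i$ for $1 \leq i \leq n$ together with the length-two modules $T_i := M_{[i,i+1]}$ for $1 \leq i \leq n-1$, and in the notation of \cref{hyp:quiver} one has $T_i = \tilde P_i = \tilde I_{i+1}$.

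The lower bound is then immediate: choosing $\phi=\{S_{i^*}\}$ with $N_{i^*}=\max_i N_i$ gives $A(\phi)=(N_{i^*})$ by \cref{fct:collection} \cref{itm:dimextsimples}, so $\fpd(A(n)\dash\bmod) \geq \max_i N_i$.

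For the upper bound, I would fix an arbitrary brick set $\phi$ and first decide which objects can coexist. A preliminary Hom-computation via \cref{lem:quivpowerful} (performed inside $B$) yields $\Hom_A(T_i,S_i)\neq 0$, $\Hom_A(S_{i+1},T_i)\neq 0$, and $\Hom_A(T_{i+1},T_i)\neq 0$, so that $T_i \in \phi$ forbids $S_i$, $S_{i+1}$, and any $T_j$ with $|i-j|\leq 1$ from lying in $\phi$. Next I plan to assemble the vanishings in \cref{lem:rad}: part \cref{lem:radproj} (using $T_i \not\cong S_i$) gives $\Ext^1_A(T_i,T_j)=0$ for every pair in $\phi$; part \cref{lem:simproj} gives $\Ext^1_A(T_i,S_j)=0$ whenever $i \neq j$; and part \cref{lem:siminj}, rewritten via $T_i=\tilde I_{i+1}$, gives $\Ext^1_A(S_j,T_i)=0$ whenever $j \neq i+1$. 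Combined with the coexistence restrictions above, every row and column of $A(\phi)$ indexed by some $T_i$ is identically zero.

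Consequently $\rho(A(\phi))$ will equal the spectral radius of the principal submatrix indexed by the simples $\{S_{i_1},\ldots,S_{i_k}\} \subseteq \phi$, ordered $i_1 < \cdots < i_k$. By \cref{fct:collection} \cref{itm:dimextsimples} the $(a,b)$-entry is the number of arrows $i_a \to i_b$ in $Q$: namely $N_{i_a}$ when $a=b$, $1$ when $i_b=i_a+1$, and $0$ otherwise; in particular the submatrix is upper triangular. Hence $\rho(A(\phi)) = \max_a N_{i_a} \leq \max_i N_i$, matching the lower bound. I expect the main obstacle to be the bookkeeping that establishes the $\Hom$-based coexistence rules and verifies that they kill every $T$-indexed entry of the adjacency matrix; once this is in hand, the spectral computation is essentially tautological.
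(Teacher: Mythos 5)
Your proposal is correct and follows essentially the same route as the paper's proof: reduce via \cref{lem:quivpowerful} to bricks of $C=\kk Q'$ annihilated by $(\geq 2)$, identify them as the simples $S_i$ together with $T_i=\tilde P_i=\tilde I_{i+1}$, invoke \cref{lem:rad} and Hom-based coexistence constraints to zero out every $T$-indexed row and column of $A(\phi)$, and observe that the remaining simple-indexed block is upper triangular with diagonal entries $N_{i_a}$. The only cosmetic difference is that you argue structurally rather than writing out the full Hom and $\Ext^1$ matrices, and you achieve the lower bound with a singleton $\{S_{i^*}\}$ instead of the full set $\{S_1,\ldots,S_n\}$; both variants give the same conclusion.
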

	
	\vsp
	
	\begin{proof}
	
	We have $A(n)=\kk Q/(\geq 2).$ Let $J$ be the ideal generated by the loops. Let $B:=A(n)/J.$ In other words, $B$ is the radical square zero algebra described by the following quiver $Q'$:
	
	\[
	\begin{tikzcd}
		1   \arrow[r, "x_2"] & 2 \arrow[r, "x_3"] &  \cdots \arrow[r, "x_{n}"] & n 
	\end{tikzcd} 
	\]	
	
	We can see that $B=C/(\geq 2),$ where $C:=\kk Q'$.
	
	By \cref{lem:quivpowerful}, we have a one-to-one correspondence between bricks of $A(n)$ and bricks of $C$ annihilated by $(\geq 2)$, as well as the following natural isomorphism for any brick $A(n)$-modules $M,N$:
	\[
	\Hom_{A(n)}(M,N) \cong \Hom_{B}(M,N) \cong \Hom_{C}(M,N).
	\]
	Hence it suffices to compute the dimensions of $\Hom_{A(n)}$-spaces in $B$ or $C$.
	
	The set of brick modules of $C$ is a subset of the set of indecomposable modules of $C,$ which are well known by Gabriel's theorem \cite{Gabriel}. The dimension vectors of these modules correspond precisely to the positive roots for the $A_n$ root system \cite[p. 64]{Humphreys}.
	
	Let $E_i:=(0,\ldots,1, \ldots, 0)$, the $n$-dimensional vector with a $1$ in the $i$th position and $0$ elsewhere. Then the positive roots of $A_n$ are given by the following vectors:
	\[
	\l\{\sum_{l=i}^j E_l : 1 \leq i \leq j \leq n \r\},
	\]
	
	or in different notation,
	
	\begin{align*}
	&(1,0,\ldots, 0) \\
	&(0,1,\ldots,0) \\
	& \vdots \\
	&(0,\ldots,0,1) \\
	&(1,1, 0, \ldots, 0) \\
	&(0,1,1,0, \ldots, 0) \\
	& \vdots \\
	&(0, \ldots, 0,1,1) \\
	&(1,1,1,0,\ldots, 0) \\
	&(0,1,1,1,0,\ldots,0) \\
	& \vdots \\
	&(0, \ldots, 0,1,1,1) \\
	&\vdots \\
	&(1,\ldots,1).
	\end{align*}
	
	However by \cref{fct:collection} \cref{prp:notannihgeqn}, \cref{prp:indreps}, any dimension vector with more than two $1$'s in a row gives a $C$-module that is not annihilated by $(\geq 2).$ Therefore, the only indecomposables that could possibly be bricks of $A(n)$ are the following. Note that below we use Schiffler's notation for representations \cite[Remark 1.1]{Sch}.
	
	\begin{align*}
	(1)=&(1,0,\ldots, 0) \\
	(2)=&(0,1,\ldots,0) \\
		& \vdots \\
	(n)=&(0,\ldots,0,1) \\
	\l(\gfrac{1}{2}\r)=&(1,1, 0, \ldots, 0) \\
	\l(\gfrac{2}{3}\r)=&(0,1,1,0, \ldots, 0) \\
				& \vdots \\
	\l(\gfrac{n-1}{n}\r)=&(0, \ldots, 0,1,1).
	\end{align*}
		
	We will show that all of these are bricks of $A(n)$ and calculate the $\Hom$ matrix.
	
	Because $(i),(j)$ are simple,
	\begin{equation}
	\dim \Hom_{C}((i),(j))=\delta_{ij},
	\end{equation}
	so $(i),(j)$ are bricks.
	
	If we have $\pgfrac{i}{i+1}, \pgfrac{j}{j+1}$ with $i \not=j, i\not=j+1,$ and $i\not=j-1$, then we have no overlap between the non-zero vector spaces of the representations, and $\Hom_{C}\l(\pgfrac{i}{i+1}, \pgfrac{j}{j+1}\r)=0.$ So we only need to check the morphisms in the cases $i=j, i=j+1, i=j-1$. 
	
	Denote the indecomposable projective (respectively injective, respectively simple) of $B$ at vertex $i$ by $\tilde{P}_i$ (respectively $\tilde{I_i}$, respectively $\tilde{S_i}$). Then by \cref{fct:collection} \cref{prp:projinjrepQ},
	\begin{equation}
	\tilde{P_i}=\l( \gfrac{i}{i+1} \r)=\tilde{I}_{i+1}.
	\end{equation}
	
	If $i=j$, 
	\[
	\dim \Hom_{A(n)}\l( \pgfrac{i}{i+1}, \pgfrac{i}{i+1}\r)=\dim \Hom_{B}\l( \tilde{P_i}, \tilde{P_i}\r)=\dim \Hom_{B}\l( Be_i, Be_i\r)=\dim e_iBe_i=1,
	\]
	so $ \pgfrac{i}{i+1}$ is a brick.
	\vsp
	
	If $i=j+1,$ 
	\[
	\dim \Hom_{A(n)}\l( \pgfrac{i}{i+1}, \pgfrac{i-1}{i}\r)=
	\dim \Hom_{B}\l( \tilde{P_i}, \tilde{P}_{i-1}\r)=
	\dim \Hom_{B}(Be_i, Be_{i-1})=
	\dim e_{i}Be_{i-1}=
	1.
	\]
	
	If $i=j-1$,
	\[
	\Hom_{A(n)}\l( \pgfrac{i}{i+1}, \pgfrac{i+1}{i+2}\r)=
	\Hom_{B}\l( \tilde{P_i}, \tilde{P_{i+1}}\r)=
	\Hom_{B}(Be_i, Be_{i+1})=
	e_{i}Be_{i+1}=
	0.
	\]
	
	Therefore,
	\begin{equation}
	\dim \Hom_{A(n)}\l(\pgfrac{i}{i+1}, \pgfrac{j}{j+1} \r) = \begin{cases}
											1 & \text{ if }i=j+1 \\
											1 & \text{ if }i=j \\
											0 & \text{ otherwise}
											\end{cases}.
	\end{equation}
	
	Recall that by \cref{fct:collection} \cref{prp:dimhoms1}, $\dim \Hom_{B}(\tilde{S}_i,\tilde{I}_j)=\delta_{ij}$. Therefore since $\tilde{S}_i=(i), \tilde{I}_{j+1}=\pgfrac{j}{j+1},$

	\begin{equation}
	\dim \Hom_{A(n)}\l((i), \l(\gfrac{j}{j+1}\r)\r)=	\begin{cases}
										1 &\text{ if } i=j+1 \\
										0 &\text{ otherwise}
										\end{cases}.
	\end{equation}
	
	Now let us consider $\l(\gfrac{i}{i+1}\r), (j)$. By \cref{fct:collection} \cref{prp:dimhoms1},
	\begin{equation}
	\dim \Hom_{A(n)}\l(\l(\gfrac{i}{i+1}\r),(j)\r)=\dim \Hom_{B}(\tilde{P_i}, \tilde{S_j})=\delta_{ij}.
	\end{equation}
	
	Compiling all this information, we have the following block matrix $H$. We define $H_{ij}=\dim \Hom_{A(n)}(X_i,X_j)$, where $X_i,X_j \in \l\{ (1), \ldots, (n), \pgfrac{1}{2}, \ldots, \pgfrac{n-1}{n}\r\}$:
	
	\[
	H=
  	\begin{blockarray}{ccccccc|ccccc}
    	{} & (1) & (2) & (3) & \cdots & (n-1) & (n) & \l(\gfrac{1}{2}\r) & \l(\gfrac{2}{3}\r) & \cdots & \l(\gfrac{n-2}{n-1}\r) & \l(\gfrac{n-1}{n}\r) \\
    	\begin{block}{c(cccccc|ccccc@{\hspace*{5pt}})}
    	(1) & 1 & 0 & 0 & 0 & \cdots & 0 				& 0 & 0 &  \cdots & 0 & 0 \\
    	(2) & 0 & 1 & 0 & 0 & \cdots & 0 				& 1 & 0 &  \cdots & 0 & 0 \\
	(3) & 0 & 0 & 1 & 0 & \cdots & 0					& 0 & 1 & 0 & 0 & 0 \\
    	\vdots&\vdots&\vdots&\ddots&\ddots&\ddots&\vdots	& \vdots & \ddots &  \ddots & 0 & 0 \\
	(n-1) & 0 & 0  & 0 & 0 & 1 & 0 					& 0 & 0 &  0 & 1 & 0 \\
    	(n) &  0 & 0 & 0 & 0 & 0 & 1 					& 0 & 0  & 0 & 0 & 1 \\
    	\cline{1-12}
    	\l(\gfrac{1}{2}\r) & 1 & 0 & 0 & 0 & 0 & 0			& 1 & 0 & \cdots & 0 & 0 \\
    	\l(\gfrac{2}{3}\r) & 0 & 1 & 0 & 0 & 0 & 0			& 1 & 1 & \cdots & 0 & 0 \\
    	\vdots & \vdots & \ddots & \ddots & \ddots & 0		& 0 & 0 & 1 & \ddots & 0 & 0 \\
	\l(\gfrac{n-2}{n-1}\r) & 0 & 0 & 0 & 1 & 0 & 0 		& 0 & 0 & \ddots & \ddots & 0 \\
    	\l(\gfrac{n-1}{n}\r) & 0 & 0 & 0 & 0 & 1 & 0 	 		& 0 & 0 & 0 & 1 & 1 \\
    	\end{block}
  	\end{blockarray}
	\]
	
	
	Now we can calculate the $\Ext^1_{A(n)}$ matrix. We already know what the $\Ext^1$ groups for the simple objects should be. Recall from \cref{lem:rad} that
	\begin{align*}
	\Ext^1_{A(n)}(S_i, \tilde{I_j})&=0, \: \forall i \not=j \\
	\Ext^1_{A(n)}(\tilde{I_i}, \tilde{I_j})&=0, \text{ if }i\not=j \text{ or }\tilde{I_i}\text{ is not simple} \\
	\Ext^1_{A(n)}(\tilde{P_i}, S_j)&=0, \: \forall i \not=j \\
	\Ext^1_{A(n)}(\tilde{P_i}, \tilde{P_j})&=0, \text{ if }i\not=j \text{ or }\tilde{P_i}\text{ is not simple}. \\
	\end{align*}
	
	Let $*$ denote a possible non-zero matrix entry. We have $(i)=S_i$ and $\l(\gfrac{j}{j+1}\r)=\tilde{P_j}=\tilde{I}_{j+1}$. Using the same notation as before, if we define $E_{ij}=\Ext^1_{A(n)}(X_i,X_j)$, we have
	\[
	E=
  	\begin{blockarray}{ccccccc|ccccc}
    	{} & (1) & (2) & (3) & \cdots & (n-1) & (n) & \l(\gfrac{1}{2}\r) & \l(\gfrac{2}{3}\r) & \cdots & \l(\gfrac{n-2}{n-1}\r) & \l(\gfrac{n-1}{n}\r) \\
    	\begin{block}{c(cccccc|ccccc@{\hspace*{5pt}})}
    	(1) & N_1 & 1 & 0 & 0 & \cdots & 0 				& 0 & 0 &  \cdots & 0 & 0 \\
    	(2) & 0 & N_2 & 1 & 0 & \cdots & 0 				& * & 0 &  \cdots & 0 & 0 \\
	(3) & 0 & 0 & N_3 & 1 & \cdots & 0				& 0 & * & \ddots & 0 & 0 \\
    	\vdots&\vdots&\vdots&\ddots&\ddots&\ddots&\vdots	& \vdots & \ddots &  \ddots & 0 & 0 \\
	(n-1) & 0 & 0  & 0 & 0 & N_{n-1} & 1 				& 0 & 0 & 0 & * & 0 \\
    	(n) &  0 & 0 & 0 & 0 & 0 & N_n					& 0 & 0  & 0 & 0 & * \\
    	\cline{1-12}
    	\l(\gfrac{1}{2}\r) & * & 0 & 0 & 0 & 0 & 0			& 0 & 0 & 0 & 0 & 0 \\
    	\l(\gfrac{2}{3}\r) & 0 & * & 0 & 0 & 0 & 0			& 0 & 0 & 0 & 0 & 0 \\
    	\vdots & \vdots & \ddots & \ddots & \ddots & 0& 0 	& 0 & 0 & 0 & 0 & 0 \\		\l(\gfrac{n-2}{n-1}\r) & 0 & 0 & 0 & * & 0 & 0 		& 0 & 0 & 0 & 0 & 0 \\
    	\l(\gfrac{n-1}{n}\r) & 0 & 0 & 0 & 0 & * & 0 	 		& 0 & 0 & 0 & 0 & 0 \\
    	\end{block}
  	\end{blockarray}
	\]
	
	Now suppose that we have a brick set 
	\[
	\phi = \l\{ (i_1), \ldots, (i_M), \l(\gfrac{j_1}{j_1+1} \r),\ldots, \l(\gfrac{j_L}{ j_L+1} \r): i_1 < i_2 < \cdots <i_M, \quad j_1 < j_2 < \cdots < j_L\r\}.
	\]
	
	We will show that for $* \in \{0,1\}$,
	\[
	A(\phi)=
	\begin{blockarray}{ccccccc|ccccc}
    	{} & (i_1) & (i_2) & (i_3) & \cdots & (i_{M-1}) & (i_M) & \l(\gfrac{j_1}{j_1+1}\r) & \l(\gfrac{j_2}{j_2+2}\r) & \cdots & \l(\gfrac{j_{L-1}}{j_{L-1}+1}\r) & \l(\gfrac{j_L}{j_L+1}\r) \\
    	\begin{block}{c(cccccc|ccccc@{\hspace*{5pt}})}
    	(i_1) & N_{i_1} & * & 0 & 0 & \cdots & 0 				& 0 & 0 & 0 & 0 & 0 \\
    	(i_2) & 0 & N_{i_2} & * & 0 & \cdots & 0 				& 0 & 0 & 0 & 0 & 0 \\
	(i_3) & 0 & 0 & N_{i_3} & * & \cdots & 0				& 0 & 0 & 0 & 0 & 0 \\
    	\vdots&\vdots&\vdots&\ddots&\ddots&\ddots&\vdots		& 0 & 0 & 0 & 0 & 0 \\
	(i_{M-1}) & 0 & 0  & 0 & 0 & N_{i_{M-1}} & * 			& 0 & 0 & 0 & 0 & 0 \\
    	(i_M) &  0 & 0 & 0 & 0 & 0 & N_{i_M}					& 0 & 0 & 0 & 0 & 0 \\
    	\cline{1-12}
    	\l(\gfrac{j_1}{j_1+1}\r) & 0 & 0 & 0 & 0 & 0 & 0			& 0 & 0 & 0 & 0 & 0 \\
    	\l(\gfrac{j_2}{j_2+2}\r) & 0 & 0 & 0 & 0 & 0 & 0			& 0 & 0 & 0 & 0 & 0 \\
    	\vdots & 0 & 0 & 0 & 0 & 0 & 0 						& 0 & 0 & 0 & 0 & 0 \\
	\l(\gfrac{j_{L-1}}{j_{L-1}+1}\r) & 0 & 0 & 0 & 0 & 0 & 0 	& 0 & 0 & 0 & 0 & 0 \\
    	\l(\gfrac{j_L}{j_L+1}\r) & 0 & 0 & 0 & 0 & 0 & 0 	 		& 0 & 0 & 0 & 0 & 0 \\
    	\end{block}
  	\end{blockarray}
	\]
	
	Assuming the claim about $A(\phi)$, if there are no simples in $\phi$, $\rho(A(\phi))=0 \leq \max\l\{ N_1, \ldots, N_n \r\}.$ Else,
	\begin{equation} \label{eq:Anproofrho}
	\rho(A(\phi)) = \max\l\{N_{i_1}, \ldots, N_{i_M}\r\} \leq \max\l\{ N_1, \ldots, N_n \r\}.
	\end{equation}
	
	To compute $A(\phi)$, notice that the upper leftmost and bottom rightmost blocks are given by examination of the $\Ext^1$ matrix. To get the upper rightmost and bottom leftmost blocks, it suffices to show that for all $m,l$,
	\[
	\Ext^1_{A(n)}\l((i_m),\l(\gfrac{j_l}{j_l+1}\r) \r)=0=\Ext^1_{A(n)}\l(\l(\gfrac{j_l}{j_l+1}\r),(i_m) \r).
	\]
	
	By examination of the $\Hom$ and $\Ext^1$ matrices, if $(i) \in \l\{(1), \ldots, (n)\r\}$ and $\pgfrac{j}{k} \in \l\{ \pgfrac{1}{2}, \ldots, \pgfrac{n-1}{n}\r\}$ are in the same brick set, then 
	$$\Ext^1_{A(n)}\l((i),\pgfrac{j}{k}\r)=0=\Ext^1_{A(n)}\l(\pgfrac{j}{k},(i)\r).$$

	Therefore,
	\[
	\Ext^1_{A(n)}\l((i_m),\l(\gfrac{j_l}{j_l+1}\r) \r)=0=\Ext^1_{A(n)}\l(\l(\gfrac{j_l}{j_l+1}\r),(i_m) \r).
	\]
	
	To complete the proof, notice that 
	\[
	\tilde{\phi}:=\{(1), (2), \ldots, (n)\}
	\]
	is a brick set. By \cref{eq:Anproofrho} we have 
	\[
	\rho(A(\tilde{\phi}))=\max \l\{N_1, N_2, \ldots, N_n \r\}.
	\]
	
	Let $\Phi_b$ the set of brick sets of $A(n)$-modules. Again by \cref{eq:Anproofrho},
	\[
	\max \l\{N_1, N_2, \ldots, N_n \r\} = \rho(A(\tilde{\phi}))\leq \sup_{\phi \in \Phi_b} \rho(A(\phi)) \leq \max \l\{N_1, N_2, \ldots, N_n \r\}.
	\]
	
	Therefore,
	\[
	\fpd \l(A(n)\dash\bmod\r) = \sup_{\phi \in \Phi_b} \rho(A(\phi))=\max \l\{N_1, N_2, \ldots, N_n \r\}.
	\]	
	\end{proof}

	\vsp

	
	\subsection{$D(n)$ algebras}
	
	\begin{theorem} \label{thm:Dn1dir}
	Let $D(n)$ be the finite dimensional radical square zero algebra described by the following quiver $Q$ with $N_i$ loops (labeled $a_i^l$ for $l=1, \ldots, N_i$) at vertex i:
	
	\[
	\begin{tikzcd}
	{} & {} & {} & n-1 \arrow[out=65, in=115, loop, swap, "a_{n-1}^l"] & {}  \\
	1   \arrow[r, "x_2"]  \arrow[out=252, in=-72, loop] \arrow[out=245, in=-65, loop] \arrow[out=238, in=-58, loop, swap, "a_1^l"] & 2 \arrow[out=245, in=-65, loop, swap, "a_2^l"] \arrow[r, "x_3"] &  \cdots \arrow[r, "x_{n-2}"] & n-2 \arrow[out=252, in=-72, loop] \arrow[out=245, in=-65, loop] \arrow[out=238, in=-58, loop, swap, "a_{n-2}^l"] \arrow[r, "x_n"] \arrow[u, "x_{n-1}"] & n \arrow[out=245, in=-65, loop] \arrow[out=238, in=-58, loop, swap, "a_n^l"]
	\end{tikzcd} 
	\]
	
	Then
	\[
	\fpd \l(D(n)\dash\bmod\r) = \max\{N_1, \ldots, N_n\}.
	\]
	\end{theorem}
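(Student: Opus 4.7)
The plan is to adapt the argument from the proof of \cref{thm:A_n1dir}. First I would apply \cref{lem:quivpowerful} to reduce the enumeration of bricks in $D(n)\dash\bmod$ to the enumeration of bricks of $C := \kk Q'$ that are annihilated by $(\geq 2)$, where $Q'$ is obtained from $Q$ by deleting all loops; this identifies all relevant Hom spaces with Hom spaces in the $D_n$ path algebra, whose indecomposables are classified by Gabriel's theorem as the positive roots of the $D_n$ root system.

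Next I would filter these indecomposables using \cref{fct:collection} \cref{prp:notannihgeqn} and \cref{prp:indreps}: any representation whose dimension vector takes value $1$ at three consecutive vertices along some path of length two in $Q'$, or takes value $\geq 2$ somewhere, produces an indecomposable in which a length-two composition is forced to be nonzero. The surviving bricks are the $n$ simples $S_1,\ldots,S_n$; the $n-3$ main-line length-two modules $\l(\gfrac{i}{i+1}\r) = \tilde{P}_i = \tilde{I}_{i+1}$ for $1 \leq i \leq n-3$; the two branch injectives $\tilde{I}_{n-1} = \l(\gfrac{n-2}{n-1}\r)$ and $\tilde{I}_n = \l(\gfrac{n-2}{n}\r)$; and the three-vertex branch projective $\tilde{P}_{n-2}$ with support $\{n-2,n-1,n\}$. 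Each is a brick, with $\End_B \cong e_iBe_i = \kk$ for the non-simple ones since $Q'$ has no loops.

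Having identified the $2n$ candidate bricks, I would compute the Hom matrix by direct calculation on representations, using \cref{fct:collection} \cref{prp:dimhoms1} for the standard projective-simple and simple-injective entries. The new feature compared to $A(n)$ is the richness of the Hom structure at the branch vertex $n-2$: there are nonzero maps $\tilde{I}_{n-1} \to \tilde{I}_{n-2}$, $\tilde{I}_n \to \tilde{I}_{n-2}$, $\tilde{P}_{n-2} \to \tilde{I}_j$ for $j \in \{n-2,n-1,n\}$, $\tilde{P}_{n-2} \to S_{n-2}$, and embeddings $S_{n-1}, S_n \hookrightarrow \tilde{P}_{n-2}$. Combining these Hom constraints with \cref{lem:simproj}, \cref{lem:siminj}, \cref{lem:radproj}, and \cref{lem:radinj}, all off-diagonal $\Ext^1$ entries within any brick set vanish except possibly the two exceptional "branch" entries $\Ext^1_{D(n)}(\tilde{I}_{n-1}, S_n)$ and $\Ext^1_{D(n)}(\tilde{I}_n, S_{n-1})$, each at most $1$; crucially, by \cref{lem:siminj} the opposite entries $\Ext^1_{D(n)}(S_n, \tilde{I}_{n-1})$ and $\Ext^1_{D(n)}(S_{n-1}, \tilde{I}_n)$ vanish, so the $\Ext^1$ relation on any brick set is acyclic.

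It follows that after a suitable reordering of $\phi$ (placing $\tilde{I}_{n-1}$ before $S_n$ and $\tilde{I}_n$ before $S_{n-1}$ whenever applicable), $A(\phi)$ is upper triangular with diagonal entries $N_{i_m}$ coming from the simples in $\phi$ and zeros coming from the non-simple indecomposables (by \cref{lem:radproj} and \cref{lem:radinj}), giving $\rho(A(\phi)) = \max_m N_{i_m} \leq \max\{N_1,\ldots,N_n\}$. The hard part will be the projective-resolution calculation showing $\Ext^1_{D(n)}(\tilde{I}_{n-1}, S_j) = 0$ for $j \neq n$ and the symmetric statement for $\tilde{I}_n$, for which I would use \cref{lem:ext1projnohom} and the vanishing of $\Hom_A(P_j, N)$ whenever $N$ has zero component at $j$. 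Finally, $\tilde{\phi} := \{S_1,\ldots,S_n\}$ is a brick set whose adjacency matrix is upper triangular with diagonal $(N_1,\ldots,N_n)$ and $\{0,1\}$ superdiagonal entries from \cref{fct:collection} \cref{itm:dimextsimples}, so $\rho(A(\tilde{\phi})) = \max\{N_1,\ldots,N_n\}$, matching the upper bound and yielding $\fpd(D(n)\dash\bmod) = \max\{N_1,\ldots,N_n\}$.
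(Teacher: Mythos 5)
Your proposal follows the same overall scaffold as the paper's proof: reduce via \cref{lem:quivpowerful} to bricks of $C$ annihilated by $(\geq 2)$, identify the surviving indecomposables from the $D_n$ positive roots (using \cref{prp:notannihgeqn}, \cref{prp:indreps}, and a decomposability argument for dimension-$2$ vertices, which the paper packages as \cref{lem:notDnindreps}), compute the Hom and $\Ext^1$ matrices, and conclude via triangularity. Where the paper runs through an explicit five-case analysis according to which of $\pgfrac{n-2}{n-1}$, $\pgfrac{n-2}{n}$, $\pgfrac{n-2}{n-1 \:\: n}$ lie in $\phi$, you instead argue once that the $\Ext^1$-digraph on any brick set is acyclic and can be topologically ordered; that is a tidier organizing idea that reaches the same bound $\rho(A(\phi)) \leq \max N_i$.

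There are, however, two gaps. First, your assertion that the only potentially nonzero off-diagonal $\Ext^1$ entries (aside from the simple-to-simple ones) are $\Ext^1(\tilde I_{n-1}, S_n)$ and $\Ext^1(\tilde I_n, S_{n-1})$ omits $\Ext^1\bigl(S_{n-3}, \tilde P_{n-2}\bigr)$: the paper's own $\Ext^1$ table flags a possible nonzero entry in position $\bigl((n-3), \pgfrac{n-2}{n-1\:\:n}\bigr)$, and $S_{n-3}$ and $\tilde P_{n-2}$ are Hom-orthogonal, so they can coexist in a brick set. Your proposed reordering (place $\tilde I_{n-1}$ before $S_n$ and $\tilde I_n$ before $S_{n-1}$) is therefore insufficient as stated; you must additionally observe that $\tilde P_{n-2}$ has no outgoing $\Ext^1$ within any brick set (every target of $\Ext^1(\tilde P_{n-2}, -)$ fails the Hom-orthogonality test against $\tilde P_{n-2}$), so it can be placed last. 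Second, \cref{lem:ext1projnohom} controls $\Ext^1_A(\tilde P_i, -)$, not $\Ext^1_A(\tilde I_i, -)$; since $\pgfrac{n-2}{n-1} = \tilde I_{n-1}$ and $\pgfrac{n-2}{n} = \tilde I_n$ are not among the $\tilde P_i$, that lemma does not apply to them, and you need the bespoke projective resolutions (with projective cover $P_{n-2}$ and first syzygy generated by $x_{n}$ or $x_{n-1}$ together with the loops at $n-2$) that the paper carries out.
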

		
	\vsp
	
	\begin{proof}
	
	We will reuse some notation from the previous theorem. Let $D(n)=\kk Q/(\geq 2).$ Let $J$ be the ideal generated by the loops, and let $B:=D(n)/J.$ In other words, $B$ is the radical square zero algebra described by the following quiver $Q'$:
	
	\[
	\begin{tikzcd}
	{} & {} & {} & n-1 & {}  \\
	1   \arrow[r, "x_2"] & 2 \arrow[r, "x_3"] &  \cdots \arrow[r, "x_{n-2}"] & n-2 \arrow[r, "x_n"] \arrow[u, "x_{n-1}"] & n 
	\end{tikzcd} 
	\]	
	
	We can see that $B=C/(\geq 2),$ where $C:=\kk Q'$.
		
	By \cref{lem:quivpowerful}, we have a one-to-one correspondence between bricks of $D(n)$ and bricks of $C$ annihilated by $(\geq 2)$, as well as the following natural isomorphism:
	\[
	\Hom_{D(n)}(M,N) \cong \Hom_{B}(M,N) \cong \Hom_{C}(M,N).
	\]
	Hence it suffices to compute the dimensions of $\Hom_{D(n)}$-spaces in $B$ or $C$. 
	
	The indecomposable modules of $C$ are well known by Gabriel's theorem \cite{Gabriel}. The dimension vectors of these modules correspond precisely to the positive roots for the $D_n$ root system.
	
	Let $\varepsilon_i$ be the vector in $\R^n$ with a $1$ in the $i$th component and $0$ everywhere else. Recall from \cite[p. 64]{Humphreys} that the root system for $D_n$ consists of the vectors 
	$$\{ \pm (\varepsilon_i \pm \varepsilon_j): \: i \not = j\}.$$ 
	Let 
	$$E_1:= \varepsilon_1-\varepsilon_2, \ldots, E_{n-1}:= \varepsilon_{n-1}-\varepsilon_{n}, E_n:=\varepsilon_{n-1}+\varepsilon_{n}$$ 
	be the standard base for $D_n$. Here we write out the other positive roots in terms of the standard base.
	
	For $1 \leq i < j \leq n,$
\[
\varepsilon_i - \varepsilon_j=E_i + E_{i+1} + \cdots + E_{j-1} =\sum_{l=i}^{j-1}E_l.
\]
	
For $1 \leq i \leq n-2$,
\[	\varepsilon_i + \varepsilon_n = E_i + E_{i+1} + \cdots + E_{n-2} + E_n=\l(\sum_{l=i}^{n-2} E_l \r)+ E_n.
\]

For $1 \leq i \leq n-2$,
\[
\varepsilon_i + \varepsilon_{n-1} =E_i + E_{i+1} + \cdots + E_{n-2} + E_{n-1} + E_n = \sum_{l=i}^n E_l. \]
	

For $1 \leq i < j \leq n-2,$	
\[
\varepsilon_i + \varepsilon_j = (E_i + E_{i+1} + \cdots + E_{n-2})+(E_j+E_{j+1} + \cdots + E_{n-2} + E_{n-1} +E_n) = \sum_{l=i}^{n-2}E_l + \sum_{k=j}^n E_k.
\]
	

		Let $\tilde{P_i}$ (respectively $\tilde{I_i}$) denote the indecomposable projective (respectively injective) of $B$ at vertex $i$. By \cref{fct:collection} \cref{prp:projinjrepQ}, we have
		\begin{align} \label{eq:Dnprojinj1}
		\tilde{P_1}=\tilde{I_2}= \pgfrac{1}{2} 	&=(1,1, 0, \ldots, 0) \\
		\tilde{P_2}= \tilde{I_3} =\pgfrac{2}{3}	&=(0,1,1, 0, \ldots, 0) \\
		&\vdots \\
		\tilde{P}_{n-3}=\tilde{I}_{n-2}=\pgfrac{n-3}{n-2} &=(0, \ldots, 0, 1,1,0,0) \\
		\tilde{I}_{n-1}=\pgfrac{n-2}{n-1} 	&=(0, \ldots, 0,1,1,0) \\
		\tilde{I_n}=\pgfrac{n-2}{n}	  	&=(0, \ldots, 0,1,0,1) \\
		\tilde{P}_{n-2}=\pgfrac{n-2}{n-1 \:\: n} &=(0, \ldots, 0, 1,1,1). \label{eq:Dnprojinj2}
		\end{align}
	
		\begin{proposition} \label{prp:bricksDn}
		The only indecomposable representations of $C$ which are annihilated by $(\geq 2)$ are the following:
		\begin{align*}
		(1) 	&= E_1 =(1, 0, \ldots, 0) \\
		(2) 	&= E_2 = (0,1, 0, \ldots, 0) \\
			&\vdots \\
		(n) 	&=E_n= (0, \ldots, 0, 1) \\
		\pgfrac{1}{2} 	&= E_1+E_2=(1,1, 0, \ldots, 0) \\
		\pgfrac{2}{3}	&=E_2 + E_3= (0,1,1, 0, \ldots, 0) \\
		&\vdots \\
		\pgfrac{n-3}{n-2} &=E_{n-3}+E_{n-2}=(0, \ldots, 0, 1,1,0,0) \\
		\pgfrac{n-2}{n-1} &=E_{n-2}+E_{n-1}=(0, \ldots, 0,1,1,0) \\
		\pgfrac{n-2}{n}	  &=E_{n-2}+E_{n}=(0, \ldots, 0,1,0,1) \\
		\pgfrac{n-2}{n-1 \:\: n} &=E_{n-2}+E_{n-1}+E_{n}=(0, \ldots, 0, 1,1,1).
		\end{align*}
		\end{proposition}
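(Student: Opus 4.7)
The plan is to use Gabriel's theorem to enumerate the indecomposable $C$-modules by their dimension vectors (the positive roots of $D_n$ in the simple basis above), and then for each one decide whether it is annihilated by $(\geq 2)$. By \cref{fct:collection} \cref{prp:notannihgeqn}, annihilation amounts to the vanishing of $f_{x_{l+1}}\circ f_{x_l}$ for $l=2,\ldots,n-3$ together with $f_{x_{n-1}}\circ f_{x_{n-2}}$ and $f_{x_n}\circ f_{x_{n-2}}$. The decisive tool is \cref{fct:collection} \cref{prp:indreps}: because $Q'$ is a tree, every arrow map in an indecomposable is nonzero whenever both endpoints are nonzero.

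For the positive roots whose dimension vectors have entries in $\{0,1\}$---Types 1, 2, 3 together with the simple root $E_n=\varepsilon_{n-1}+\varepsilon_n$---each arrow map in the indecomposable is a nonzero map $\kk\to\kk$, hence an isomorphism. Thus the indecomposable is annihilated by $(\geq 2)$ exactly when no length-$2$ path of $Q'$ has all three of its vertices in the support. A direct inspection of the supports eliminates everything except the simples $(1),\ldots,(n)$, the string modules $\pgfrac{i}{i+1}$ for $1\leq i\leq n-3$, and the branch-point modules $\pgfrac{n-2}{n-1}$, $\pgfrac{n-2}{n}$, $\pgfrac{n-2}{n-1\:\:n}$ (whose length-$2$ paths all begin at $V_{n-3}=0$, making the compositions automatically zero). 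Each of these is realized as a $B$-module by \eqref{eq:Dnprojinj1}--\eqref{eq:Dnprojinj2}, so the forward direction is clear.

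The remaining case is Type 4, $\varepsilon_i+\varepsilon_j$ with $1\leq i<j\leq n-2$, and I claim no such indecomposable is annihilated. If $j\geq i+2$, then $V_i=V_{i+1}=\kk$, so by \cref{prp:indreps} the map $f_{x_{i+1}}$ is an isomorphism and $f_{x_{i+2}}$ is nonzero, hence injective on $V_{i+1}=\kk$; their composition is nonzero, contradicting annihilation. The delicate subcase $j=i+1$, with dimension vector $(0,\ldots,0,1,2,\ldots,2,1,1)$, is the main obstacle. There I would introduce $K_l:=\ker f_{x_{l+1}}$ for $l=i+1,\ldots,n-3$ and $K_{n-2}:=\ker f_{x_{n-1}}\cap\ker f_{x_n}$, and prove inductively downward from $l=n-2$ that each $K_l$ is one-dimensional: dropping to zero forces some $f_{x_{l+1}}$ to vanish and contradicts indecomposability via \cref{prp:indreps}, while the inclusion $\operatorname{Im} f_{x_{l+1}}\subseteq K_{l+1}$ supplied by $(\geq 2)=0$ bounds the rank from above. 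The same inclusion at $l=i+1$ yields $\operatorname{Im} f_{x_{i+1}}\subseteq K_{i+1}$, so setting $\phi_i=\operatorname{Id}_{V_i}$, letting $\phi_{i+1}$ be the projection onto $K_{i+1}$ along any chosen complement, and $\phi_l=0$ for $l\notin\{i,i+1\}$ defines an endomorphism $\phi$ of $V$; commutativity with the arrows is immediate from $\operatorname{Im} f_{x_{i+1}}\subseteq K_{i+1}\subseteq\ker f_{x_{i+2}}$, and $\phi$ is a nontrivial idempotent, contradicting the indecomposability of $V$.
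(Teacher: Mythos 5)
Your proof is correct, and the easy cases (dimension vectors with entries in $\{0,1\}$, plus the Type 4 subcase with two consecutive $1$'s before the $2$'s) are handled in essentially the same way as the paper. Where you diverge is the remaining Type~4 subcase $\varepsilon_i+\varepsilon_{i+1}$, where the paper instead applies a dedicated lemma (\cref{lem:notDnindreps}): there, the idempotent is built by choosing a basis of $V_{n-2}=\kk^2$ making the branch maps $g,h$ both the first-coordinate projection, and splitting off the branch end of the quiver. That argument handles all of Type~4 uniformly, with no case split on whether $j=i+1$, and with no induction, since $g\circ f=0=h\circ f$ (from annihilation), $f,g,h\neq 0$ (from \cref{prp:indreps}), and a local basis choice suffice to produce the idempotent at the branch. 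Your version builds the idempotent at the low-vertex end instead and then needs the kernel chain $K_{n-2}\supseteq\operatorname{Im}f_{x_{n-2}}$, etc.\ to pin down $K_{i+1}$ — but note the induction is more than you actually need: once you know $\operatorname{Im}f_{x_{i+1}}\subseteq\ker f_{x_{i+2}}=K_{i+1}$ and both maps are nonzero (\cref{prp:indreps}), rank--nullity on $f_{x_{i+2}}:\kk^2\to\kk^2$ already forces $\dim K_{i+1}=1$, and the projection onto $K_{i+1}$ gives the nontrivial idempotent directly, without ever looking past vertex $i+2$. Both arguments are valid; the paper's branch-side construction avoids both the $j=i+1$ vs.\ $j\geq i+2$ dichotomy and the induction, while yours is perhaps more conceptual in isolating exactly which kernel hosts the splitting.
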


		\begin{proof}

		\emph{Case 0: $E_n=\varepsilon_{n-1}+\varepsilon_n$.}
		
		This representation is annihilated by $(\geq 2)$ because $E_n$ is the simple module at vertex $n$.
		
		\vsp
		
		\emph{Case 1: Roots of the form $\varepsilon_i-\varepsilon_j$ for $1 \leq i < j \leq n$.}
		
		We have
		\[
		\varepsilon_i-\varepsilon_j= E_i + E_{i+1} + \cdots + E_{j-1} = (0,\ldots, 0, 1, \ldots, 1, 0, \ldots, 0),
		\]
		where the first $1$ is in the $i$th position and the last $1$ is in the $(j-1)$th position. Notice that there must be a $0$ in the $n$th position since $j \leq n$. By examination of the graph for $C$, we can see that if $j>i+2$ we have three $1$'s in a row, which corresponds to two non-zero paths of length 1 in a row by \cref{fct:collection} \cref{prp:indreps}. Since the vector spaces involved are 1-dimensional, the composition of these non-zero paths must also be non-zero. By \cref{fct:collection} \cref{prp:notannihgeqn}, if $j>i+2$ the representation is not annihilated by $(\geq 2)$.

		Therefore only dimension vectors of the following form are annihilated by $(\geq 2)$:
		\begin{align*}
		\varepsilon_i-\varepsilon_{i+2}&=E_i + E_{i+1} \text{ for } 1 \leq i \leq n-2 \\
		\varepsilon_i-\varepsilon_{i+1}&=E_i \text{ for } 1 \leq i \leq n-1
		\end{align*}
		
		\vsp
		
		\emph{Case 2: Roots of the form $\varepsilon_i + \varepsilon_n$ for $1 \leq i \leq n-2$.}
		
		We have
		\[
		\varepsilon_i + \varepsilon_n= E_i + E_{i+1} + \cdots + E_{n-2} + E_n=(0, \ldots, 0, 1, \ldots, 1, 0,1),
		\]
		where the first $1$ is in the $i$th position. If $i<n-2$, then we have three one-dimensional vector spaces which sit adjacent to each other in the graph of $C$, which corresponds to a non-zero path of length 2 by \cref{fct:collection} \cref{prp:indreps}. By \cref{fct:collection} \cref{prp:notannihgeqn}, such a representation is not annihilated by $(\geq 2)$. If $i=n-2$, there is no such path.
		
		Then the only representation annihilated by $(\geq 2)$ is the one corresponding to $i=n-2$:
		\[
		E_{n-2}+E_n=(0, \ldots, 0,1,0,1).
		\]
		
		\vsp
		
		\emph{Case 3: Roots of the form $\varepsilon_i + \varepsilon_{n-1}$ for $1\leq i \leq n-2$.}
		
		We have
		\[
		\varepsilon_i + \varepsilon_{n-1}= E_i + E_{i+1} + \cdots + E_{n-2} + E_{n-1}+ E_n=(0, \ldots, 0, 1, \ldots, 1,1),
		\]
		where the first $1$ is in the $i$th position. If $i<n-2$, then we have three one-dimensional vector spaces which sit adjacent to each other in the graph of $C$, which corresponds to a non-zero path of length 2 by \cref{fct:collection} \cref{prp:indreps}. By \cref{fct:collection} \cref{prp:notannihgeqn}, such a representation is not annihilated by $(\geq 2)$. If $i=n-2$, there is no such path.
		
		By \cref{fct:collection} \cref{prp:notannihgeqn}, the only representation annihilated by $(\geq 2)$ is the one corresponding to $i=n-2$:
		\[
		E_{n-2}+E_{n-1}+E_n=(0, \ldots, 0,1,1,1).
		\]
		
		\vsp
		
		\emph{Case 4: Roots of the form $\varepsilon_i+\varepsilon_j$ for $1 \leq i < j \leq n-2$.}
		
		We have
		\begin{align*}
		\varepsilon_i + \varepsilon_j&=(E_i + E_{i+1} + \cdots E_{n-2}) + (E_j + E_{j+1} + \cdots + E_{n-2} + E_{n-1} + E_n) \\
		&= E_i + E_{i+1} + \cdots + E_{j-1} + 2(E_{j}+\cdots + E_{n-2}) + E_{n-1} + E_n.
		\end{align*}
		
		If $j=n-2,$ then
		\begin{align*}
		\varepsilon_i + \varepsilon_j &= E_i + E_{i+1} + \cdots + E_{j-1} + 2(E_{j}+\cdots + E_{n-2}) + E_{n-1} + E_n \\
		&=E_i + \cdots + E_{n-3} + 2E_{n-2}+E_{n-1}+E_n \\
		&=(0, \ldots, 0, 1, \ldots, 1, 2,1,1).
		\end{align*}
		
		If $j\leq n-3$, then
		\begin{align*}
		\varepsilon_i + \varepsilon_j &= E_i + E_{i+1} + \cdots + E_{j-1} + 2(E_{j}+\cdots + E_{n-2}) + E_{n-1} + E_n \\
		&=(0, \ldots, 0, 1, \ldots, 1, 2, \ldots, 2,1,1).
		\end{align*}
		
		By \cref{lem:notDnindreps} below, these representations are not annihilated by $(\geq 2)$. Therefore, no representations of this type are allowed.
		\end{proof}

		\begin{lemma}\label{lem:notDnindreps} 
		Suppose that we have a representation $V$ of $C$ of the form
		\begin{equation} \label{eq:notDnindreps1}
		\begin{tikzcd}
		{} & {} & {} & {} & \kk & {}  \\
		V_1 \arrow[r] & \cdots \arrow[r] & V_{n-4} \arrow[r] & \kk^2 \arrow[r, "f"] & \kk^2 \arrow[r, "h"] \arrow[u, "g"] & \kk
		\end{tikzcd}
		\end{equation}
		or
		\begin{equation}\label{eq:notDnindreps2}
		\begin{tikzcd}
		{} & {} & {} & {} & \kk & {}  \\
		V_1 \arrow[r] & \cdots \arrow[r] & V_{n-4} \arrow[r] & \kk \arrow[r, "f"] & \kk^2 \arrow[r, "h"] \arrow[u, "g"] & \kk
		\end{tikzcd}
		\end{equation}
		for some $\kk$-vector spaces $V_i$ such that $f,g,h$ are non-zero and $g\circ f=0=h\circ f$. Then $V$ is decomposable.
		\end{lemma}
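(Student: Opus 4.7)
The plan is to mimic the idempotent construction used in the proof of \cref{prp:notindifannih}: produce a non-trivial idempotent in the endomorphism ring of $V$ and invoke \cite[Cor.~1.7.3]{Drozd} to deduce decomposability. Removing vertex $n-2$ from the underlying quiver disconnects it into the arm $\{1,\ldots,n-3\}$ and the two isolated leaves $\{n-1,n\}$, so the natural candidate $\varphi : V \to V$ will be the identity on the arm, zero on the leaves, and a suitable projection at vertex $n-2$.

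The key linear-algebraic observation is that $\ker g = \ker h$ as lines in the $\kk^2$ at vertex $n-2$. Both kernels are one-dimensional since $g,h$ are nonzero maps $\kk^2 \to \kk$, and the hypotheses $g\circ f = 0 = h\circ f$ with $f \neq 0$ force $0 \neq \im f \subseteq \ker g \cap \ker h$, so this intersection is itself one-dimensional and therefore equal to each kernel. I will then choose a basis $\{e_1,e_2\}$ for the $\kk^2$ at vertex $n-2$ with $e_1$ spanning $\ker g = \ker h$; automatically $\im f \subseteq \kk e_1$ while $g(e_2), h(e_2) \neq 0$.

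With this basis fixed, define $\varphi$ componentwise by $\varphi_i = \Id_{V_i}$ for $i \leq n-3$, $\varphi_{n-1} = 0 = \varphi_n$, and $\varphi_{n-2}$ equal to the projection onto $\kk e_1$ along $\kk e_2$. Compatibility with each structure map is immediate from the construction: $\varphi_{n-2}\circ f = f$ because $\im f \subseteq \kk e_1$, while $g \circ \varphi_{n-2}$ and $h \circ \varphi_{n-2}$ both vanish because $\kk e_1 = \ker g = \ker h$. Since $\varphi$ is idempotent, equals the identity at vertex $1$, and kills $V_{n-1} = \kk$, it is a non-trivial idempotent of $V$, whence $V$ is decomposable.

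The argument is uniform across both diagrams in the statement: only the dimensions at vertices $n-3, n-2, n-1, n$ and the nonvanishing of $f$ enter, so whether $V_{n-3}$ is $\kk$ or $\kk^2$ makes no difference. The only step requiring genuine care is the dimension count forcing $\ker g = \ker h$, since it is the unique place where the hypothesis $f \neq 0$ is used, but this reduces to observing that two distinct lines in $\kk^2$ meet only at the origin.
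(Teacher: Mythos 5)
Your argument is correct and yields the same conclusion as the paper's proof (a nontrivial idempotent in $\End(V)$ via \cite[Cor.~1.7.3]{Drozd}), but it is organized somewhat differently and is arguably cleaner. The paper treats \cref{eq:notDnindreps1} and \cref{eq:notDnindreps2} separately, choosing bases ad hoc in each case (for \cref{eq:notDnindreps1} a basis making $g=\pi_1$; for \cref{eq:notDnindreps2} a basis making $f=\iota_2$), and then takes $\varphi$ to be zero on the arm $\{1,\ldots,n-3\}$, the identity on the leaves $\{n-1,n\}$, and the projection $(a_1,a_2)\mapsto(a_1,0)$ at vertex $n-2$. You instead isolate the single linear-algebraic fact $\ker g = \ker h$ (forced by $0\neq \im f \subseteq \ker g\cap\ker h$ inside $\kk^2$), which lets you treat both diagrams uniformly, and you take the complementary idempotent $\Id_V - \varphi^{\text{paper}}$: identity on the arm, zero on the leaves, projection onto $\ker g$ at $n-2$. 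Both are nontrivial idempotents, so both work. The one small quibble is your justification that $\varphi$ is nontrivial: you appeal to ``$\varphi$ equals the identity at vertex $1$,'' but the lemma allows $V_1=0$, in which case that observation is vacuous. The robust justification is that $\varphi_{n-2}$ has rank $1$ (so $\varphi\neq 0$) while $\varphi_{n-1}=0$ on $V_{n-1}=\kk\neq 0$ (so $\varphi\neq \Id_V$); this does not affect the validity of the rest of your argument.
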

		
		\begin{proof}
		
		Define $Q'$ to be the full subquiver containing the vertices $\l\{1, \ldots, n-3\r\}$, and let $Q''$ be the full subquiver containing the vertices $\l\{ n-1, n\r\}$. Let $V(Q'), V(Q'')$ denote the set of vertices of $Q', Q''$ respectively.
		
		\vsp
		
		We will start with \cref{eq:notDnindreps1}. 
		
		Since $g \not=0$, we can choose a basis $e_1, e_2$ for $V_{n-2}=\kk^2$ such that $g= \pi_1$. Then 
		\begin{align*}
		g \circ f =0 &\Rightarrow \im f \subset \kk e_2 \\
		h \circ f=0, h\not=0 &\Rightarrow h=h_1 \pi_1 \text{ for some non-zero }h_1\in\kk.
		\end{align*}
		
		Then we have a map of representations $\varphi: V \rightarrow V$ such that for any $a_1, a_2 \in \kk$,
		\begin{align*}
		\varphi_a &=0 \text{ if }a\in V(Q') \\
		\varphi_a &= \Id_{V_a} \text{ if }a\in V(Q'') \\
		\varphi_{n-2}&:\kk^2 \rightarrow \kk^2 \\
		&(a_1, a_2) \mapsto (a_1, 0).
		\end{align*}
		
		Since $\varphi$ is a non-trivial idempotent in the endomorphism ring of $V$, $V$ is decomposable by \cite[Cor. 1.7.3]{Drozd}.
		
		\vsp
		
		Now let us prove the assertion for \cref{eq:notDnindreps2}. By choice of basis for $V_{n-3}$ and the fact that $f\not=0$, we can assume that $f=\iota_2$. Since $g\circ f=0=h\circ f$ and $g,h$ are non-zero, we must have $g=g_1 \pi_1, h=h_1 \pi_1$ for some non-zero $g_1,h_1 \in \kk$.
		
		Then we have a map of representations $\varphi: V \rightarrow V$ such that for any $a_1, a_2 \in \kk$,
		\begin{align*}
		\varphi_a &=0 \text{ if }a\in V(Q') \\
		\varphi_a &= \Id_{V_a} \text{ if }a\in V(Q'') \\
		\varphi_{n-2}&:\kk^2 \rightarrow \kk^2 \\
		&(a_1, a_2) \mapsto (a_1, 0).
		\end{align*}
		
		Since $\varphi$ is a non-trivial idempotent in the endomorphism ring of $V$, $V$ is decomposable by \cite[Cor. 1.7.3]{Drozd}.
		\end{proof}
		
		\vsp
		
	The list of indecomposables given in \cref{prp:bricksDn} are in fact bricks of $D(n)$, as can be seen by examination of the following $\Hom$ matrix, where $H_{ij}=\dim \Hom_{D(n)}(X_i,X_j)$ for brick objects $X_i,X_j$ of $D(n)\dash\bmod$:
	
	\[
  	\begin{blockarray}{cccccccc|cccc|ccc}
    	{} & (1) & (2) & \cdots & (n-3) & (n-2) & (n-1) & (n) & \l(\gfrac{1}{2}\r) & \l(\gfrac{2}{3}\r) & \cdots & \l(\gfrac{n-3}{n-2}\r) & \l(\gfrac{n-2}{n-1}\r) & \pgfrac{n-2}{n} & \pgfrac{n-2}{n-1 \: \: n} \\
    	\begin{block}{c(ccccccc|cccc|ccc@{\hspace*{5pt}})}
    	(1) & 1 & 0 & 0 & 0 & 0 & 0 & 0 & 0 & 0 & \cdots & 0 & 0 & 0 & 0  \\
    	(2) & 0 & 1 & 0 & 0 & 0 & 0 & 0 & 1 & 0 & \cdots & 0 & 0 & 0 & 0 \\
	\vdots & 0 & \ddots & \ddots & 0 & \vdots & \vdots & \vdots & 0 & \ddots & 0 & 0 & \vdots & \vdots & \vdots  \\
	(n-3) & 0 & 0 & 0 & 1 & 0 & 0 & 0 & 0 & 0 & \ddots & 0 & 0 & 0 & 0 \\
    	(n-2) & 0 & 0 & 0 & 0 & 1 & 0 & 0 & 0 & 0 & 0 & 1 & 0 & 0 & 0 \\
	(n-1) & 0 & 0  & 0 & 0 & 0 & 1 & 0 & 0 &  0 & 0 & 0 & 1 & 0 & 1 \\
    	(n) &  0 & 0 & 0 & 0 & 0 & 0 & 1 & 0  & 0 & 0 & 0 & 0 & 1 & 1 \\
    	\cline{1-15}
    	\l(\gfrac{1}{2}\r) & 1 & 0 & 0 & 0 & 0 & 0 & 0 & 1 & \cdots & 0 & 0 & 0 & 0 & 0 \\
    	\l(\gfrac{2}{3}\r) & 0 & 1 & 0 & 0 & 0 & 0 & 0 & 1 & 1 & 0 & 0 & 0 & 0 & 0 \\
    	\vdots & \vdots & \ddots & \ddots & \ddots & \vdots & \vdots & \vdots & 0 & \ddots & \ddots & 0 & \vdots & \vdots & \vdots \\
	\l(\gfrac{n-3}{n-2}\r) & 0 & 0 & 0 & 1 & 0 & 0 & 0 & 0 & 0 & 1 & 1 & 0 & 0 & 0 \\
	\cline{1-15}
    	\l(\gfrac{n-2}{n-1}\r) & 0 & 0 & 0 & 0 & 1 & 0 & 0 & 0 & 0 & 0 & 1 & 1 & 0 & 0 \\
	\l(\gfrac{n-2}{n}\r) & 0 & 0 & 0 & 0 & 1 & 0 	 & 0 & 0 & 0 & 0 & 1 & 0 & 1 & 0 \\
	\l(\gfrac{n-2}{n-1 \: \: n}\r) & 0 & 0 & 0 & 0 & 1 & 0 & 0 & 0 & 0 & 0 & 1 & 1 & 1 & 1 \\
    	\end{block}
  	\end{blockarray}
	\]
	
	To calculate the entries of the $\Hom$ matrix, recall from \cref{fct:collection} \cref{prp:dimhoms1}, \cref{prp:dimhoms2} that
	\begin{align*}
	\dim \Hom_{C}(\tilde{P_i},\tilde{S_j})&= \delta_{ij}, \\
	\dim \Hom_{C}(\tilde{S_i},\tilde{I_j})&=\delta_{ij}, \\
	\dim \Hom_{C}(\tilde{I_j},\tilde{I_i})&= \text{ the number of paths from }i\rightarrow j \text { in } C= \dim \Hom_{C}(\tilde{P_j},\tilde{P_i}).
	\end{align*}
	
	Using \cref{eq:Dnprojinj1} through \cref{eq:Dnprojinj2} in conjunction with these facts, we get most of the $\Hom$ matrix. We can calculate the remaining entries:
	\begin{align*}
	&\dim \Hom_{C}\l(\l(i\r),\pgfrac{n-2}{n-1\:\:n} \r) = 	\begin{cases}
											0 & \quad i \in \{1, \ldots, n-2\} \\
											1 & \quad i \in \{n-1, n\}
											\end{cases} \\
	&\dim\Hom_{C}\l(\pgfrac{n-2}{n-1},\l(i\r) \r) 	= 	\begin{cases}
											1 & \quad i =n-2 \\
											0 & \quad \text{otherwise }
											\end{cases} \\
	&\dim\Hom_{C}\l(\pgfrac{n-2}{n},\l(i\r) \r) = 		\begin{cases}
											1 & \quad i =n-2 \\
											0 & \quad \text{otherwise }
											\end{cases} \\
	&\dim\Hom_{C}\l(\pgfrac{n-2}{n-1},\pgfrac{n-2}{n-1\:\:n} \r)=0 \\
	&\dim\Hom_{C}\l(\pgfrac{n-2}{n},\pgfrac{n-2}{n-1\:\:n} \r)=0 \\
	&\dim\Hom_{C}\l(\pgfrac{n-2}{n-1\:\:n}, \pgfrac{n-2}{n-1} \r)=1 \\
	&\dim\Hom_{C}\l(\pgfrac{n-2}{n-1\:\:n}, \pgfrac{n-2}{n} \r)=1.
	\end{align*}

	We have the following $\Ext^1$ matrix using the same notation as before, where $E_{ij}=\dim \Ext^1_{D(n)}(X_i,X_j)$, and $*$ denotes a possible non-zero entry:
	
	\[
  	\begin{blockarray}{cccccccc|cccc|ccc}
    	{} & (1) & (2) & \cdots & (n-3) & (n-2) & (n-1) & (n) & \l(\gfrac{1}{2}\r) & \l(\gfrac{2}{3}\r) & \cdots & \l(\gfrac{n-3}{n-2}\r) & \l(\gfrac{n-2}{n-1}\r) & \pgfrac{n-2}{n} & \pgfrac{n-2}{n-1 \: \: n} \\
    	\begin{block}{c(ccccccc|cccc|ccc@{\hspace*{5pt}})}
    	(1) & N_1 & 1 & 0 & 0 & 0 & 0 & 0 & 0 & 0 & \cdots & 0 & 0 & 0 & 0  \\
    	(2) & 0 & N_2 & \ddots & 0 & 0 & 0 & 0 & * & 0 & \cdots & 0 & 0 & 0 & 0 \\
	\vdots & \vdots & \ddots & \ddots & \ddots & \vdots & \vdots & \vdots & 0 & \ddots & 0 & 0 & \vdots & \vdots & \vdots  \\
	(n-3) & 0 & 0 & 0 & N_{n-3} & 1 & 0 & 0 & 0 & 0 & \ddots & 0 & 0 & 0 & * \\
    	(n-2) & 0 & 0 & 0 & 0 & N_{n-2} & 1 & 1 & 0 & 0 & 0 & * & 0 & 0 & * \\
	(n-1) & 0 & 0  & 0 & 0 & 0 & N_{n-1} & 0 & 0 &  0 & 0 & 0 & * & 0 & * \\
    	(n) &  0 & 0 & 0 & 0 & 0 & 0 & N_n & 0  & 0 & 0 & 0 & 0 & * & * \\
    	\cline{1-15}
    	\l(\gfrac{1}{2}\r) & * & 0 & 0 & 0 & 0 & 0 & 0 & 0 & 0 & \cdots & 0 & 0 & 0 & 0 \\
    	\l(\gfrac{2}{3}\r) & 0 & * & 0 & 0 & 0 & 0 & 0 & 0 & 0 & 0 & 0 & 0 & 0 & 0 \\
    	\vdots & \vdots & \ddots & \ddots & \ddots & \vdots & \vdots & \vdots & 0 & \ddots & \ddots & 0 & \vdots & \vdots & \vdots \\
	\l(\gfrac{n-3}{n-2}\r) & 0 & 0 & 0 & * & 0 & 0 & 0 & 0 & 0 & 0 & 0 & 0 & 0 & 0 \\
	\cline{1-15}
    	\l(\gfrac{n-2}{n-1}\r) & 0 & 0 & 0 & 0 & * & 0 & * & 0 & 0 & 0 & 0 & 0 & 0 & * \\
	\l(\gfrac{n-2}{n}\r) & 0 & 0 & 0 & 0 & * & * 	 & 0 & 0 & 0 & 0 & 0 & 0 & 0 & * \\
	\l(\gfrac{n-2}{n-1 \: \: n}\r) & 0 & 0 & 0 & 0 & * & 0 & 0 & 0 & 0 & 0 & 0 & * & * & 0 \\
    	\end{block}
  	\end{blockarray}
	\]
	
	
	
	Recall the results of \cref{lem:rad,lem:ext1projnohom,lem:sinknohom}:
	\begin{align}
	&\Ext^1_{D(n)}(\tilde{P_i},\tilde{S_j})=0=\Ext^1_{D(n)}(\tilde{S_i},\tilde{I_j}) \text{ if }i\not=j \label{eq:simproj}\\
	&\Ext^1_{D(n)}(\tilde{P_i},\tilde{P_j})=0=\Ext^1_{D(n)}(\tilde{I_i},\tilde{I_j}) \text{ if they are not simple} \label{eq:radproj} \\
	&\Ext^1_{D(n)}(\tilde{P_i},M)=0 \text{ if }\Hom_{D(n)}(P_i,M)=0 \\
	&\Ext^1_{D(n)}(\tilde{S_i},M)=0 \text{ if }i\text{ is a sink in }Q'\text{ and }\Hom_{D(n)}(P_i,M)=0.
	\end{align}
	
	By \cref{eq:simproj} and \cref{eq:radproj}, we get all of the entries in the four top left block matrices. We also get the entries in the middle rightmost block, the middle bottom block, and the bottom rightmost block. It remains to check the top rightmost block and the bottom leftmost block. 
	
	For the top rightmost block, note that \cref{eq:radproj} gives all of the entries in the $\pgfrac{n-2}{n-1}, \pgfrac{n-2}{n}$ columns. 
	
	\vsp
	
	{\bf Claim.}
	$\Ext^1_{D(n)}\l(\l(i\r), \pgfrac{n-2}{n-1 \:\: n}\r)=0$ for $i \leq n-4.$
	
	\vsp
	
	We have the following projective resolution for $(i)=S_i$:
	\[
	\begin{tikzcd}
	\cdots \arrow[r] & P_{i+1} \oplus P_i^{N_i} \arrow[r] \arrow[rd] & P_i \arrow[r] & S_i \arrow[r] & 0 \\
	{} & {} & \kk x_{i+1} \oplus \l( \bigoplus_{l=1}^{N_i} \kk a_i^l \r) \arrow[hookrightarrow]{u} & {} & {}
	\end{tikzcd}
	\]
	
	Now take $\Hom_{D(n)}\l(-,\pgfrac{n-2}{n-1\:\: n}\r)=\Hom_{D(n)}(-,\tilde{P}_{n-2})$:
	\[
	\begin{tikzcd}
	0 \arrow[r] & \Hom_{D(n)}(P_i,\tilde{P}_{n-2}) \arrow[r] & \Hom_{D(n)}(P_{i+1},\tilde{P}_{n-2}) \oplus \Hom_{D(n)}(P_i,\tilde{P}_{n-2})^{N_i} \arrow[r] & \cdots
	\end{tikzcd}
	\]
	
	Then for $j\not = n-2$,
	\begin{align*}
	\dim \Hom_{D(n)}(P_j, \tilde{P}_{n-2})&=\dim \Hom_{D(n)}\l(D(n)e_j, D(n)e_{n-2}/Je_{n-2}\r) \\
	&=\dim e_jD(n)e_{n-2}/Je_{n-2} \\
	&=\text{ the number of paths }n-2 \to j.
	\end{align*}
	Since $i \leq n-4$, there are no paths from $n-2$ to $j=i$ or to $j=i+1$. Therefore, $\Hom_{D(n)}(P_{i+1},\tilde{P}_{n-2}) \oplus \Hom_{D(n)}(P_i,\tilde{P}_{n-2})^{N_i}=0$.
	
	Now let us examine the bottom left block matrix. We already know the $\pgfrac{n-2}{n-1\:\:n}$ row of the matrix by \cref{eq:simproj}. 
	
	\vsp
	
	{\bf Claim.}
	$\Ext^1_{D(n)}\l(\pgfrac{n-2}{n},\l(i\r)\r)=0$ for $i\not\in\{ n-1, n-2\}$, and $\Ext^1_{D(n)}\l(\pgfrac{n-2}{n-1},\l(i\r)\r)=0$ for $i\not\in\{ n, n-2\}$.
	
	\vsp

	We have the following projective resolution of $\pgfrac{n-2}{n},$ which is equivalent as a $\kk$-vector space to $\frac{P_{n-2}}{\kk x_{n-1} \oplus \l(\bigoplus_{l=1}^{N_{n-2}} \kk a^l_{n-2}\r)}$:
	
	\[
	\begin{tikzcd}
	\cdots \arrow[r] & P_{n-1} \oplus P_{n-2}^{N_{n-2}}\arrow[r]\arrow[dr] & P_{n-2} \arrow[r]& \pgfrac{n-2}{n}\arrow[r] & 0 \\
	{} & {} & kx_{n-1}  \oplus\l(\bigoplus_{l=1}^{N_{n-2}} ka^l_{n-2}\r) \arrow[hookrightarrow]{u} & {} & {}
	\end{tikzcd}
	\]
	
	Take $\Hom_{D(n)}(-, (i))=\Hom_{D(n)}(-, S_i)$:
	\[
	\begin{tikzcd}
	0 \arrow[r] & \Hom_{D(n)}(P_{n-2},S_i)\arrow[r] & \Hom_{D(n)}(P_{n-1},S_i) \oplus \Hom_{D(n)}(P_{n-2},S_i)^{N_{n-2}}\arrow[r] & \cdots
	\end{tikzcd}
	\]
	
	Since $n-1\not=i\not=n-2$, $\Hom_{D(n)}(P_{n-1},S_i) \oplus \Hom_{D(n)}(P_{n-2},S_i)^{N_{n-2}}=0$ so $\Ext^1_{D(n)}\l(\pgfrac{n-2}{n},\l(i\r)\r)=0$.
	
	The proof for $\pgfrac{n-2}{n-1}$ is analogous.
		
	\vsp
	
	{\bf Claim.}
	Let $\phi$ be any brick set. Then $\rho(A(\varphi)) \leq \max\{N_1, \ldots, N_n\}.$ Since $\phi=\{(1), \ldots, (n)\}$ is a brick set such that $\rho(A(\varphi)) =\max\{N_1, \ldots, N_n\},$ we must have $\fpd \l(D(n)\dash\bmod\r) = \max\{N_1, \ldots, N_n\}$.
	
	\vsp
	
	First notice that $\l\{\pgfrac{n-2}{n}, \pgfrac{n-2}{n-1}\r\}$ and $\l\{\pgfrac{n-2}{n-1 \: \: n}\r\}$ are maximal brick subsets of $\l\{\pgfrac{n-2}{n}, \pgfrac{n-2}{n-1}, \pgfrac{n-2}{n-1 \: \: n}\r\}$. Let $\phi$ be a brick set. We can divide our proof of the claim into the following cases.
	
	\vsp
	
	\emph{Case 1: $\pgfrac{n-2}{n}, \pgfrac{n-2}{n-1}, \pgfrac{n-2}{n-1 \: \: n} \not \in \phi.$}
	
	Then for some $1 \leq j_l \leq n-3, \: 1 \leq i_m \leq n$,
	$$\phi=\l\{ (i_1), \ldots, (i_M), \pgfrac{j_1}{j_1+1}, \ldots, \pgfrac{j_L}{j_L+1} \r\}.$$ 
	
	Note that by examination of the $\Hom$ matrix, if $(i),\pgfrac{j}{j+1}$ are in the same brick set, then $j \not = i \not = j+1$, so
	\begin{equation}
	\label{eq:ijjplus1}
	\Ext^1_{D(n)}\l((i),\pgfrac{j}{j+1}\r)=0=\Ext^1_{D(n)}\l(\pgfrac{j}{j+1}, (i)\r).
	\end{equation} 
	
	Then by examination of the $\Ext^1$ matrix we have for some $* \in \{0,1\}$:
	\[
	A(\phi)=
	\begin{blockarray}{ccccccc|ccccc}
    	{} & (i_1) & (i_2) & (i_3) & \cdots & (i_{M-1}) & (i_M) & \l(\gfrac{j_1}{j_1+1}\r) & \l(\gfrac{j_2}{j_2+1}\r) & \cdots & \l(\gfrac{j_{L-1}}{j_{L-1}+1}\r) & \l(\gfrac{j_L}{j_L+1}\r) \\
    	\begin{block}{c(cccccc|ccccc@{\hspace*{5pt}})}
    	(i_1) & N_{i_1} & * & 0 & 0 & \cdots & 0 						& 0 & 0 & 0 & 0 & 0 \\
    	(i_2) & 0 & N_{i_2} & * & 0 & \cdots & 0 						& 0 & 0 & 0 & 0 & 0 \\
		(i_3) & 0 & 0 & N_{i_3} & * & \cdots & 0							& 0 & 0 & 0 & 0 & 0 \\
    	\vdots&\vdots&\vdots&\ddots&\ddots&\ddots&\vdots		& 0 & 0 & 0 & 0 & 0 \\
		(i_{M-1}) & 0 & 0  & 0 & 0 & N_{i_{M-1}} & * 				& 0 & 0 & 0 & 0 & 0 \\
    	(i_M) &  0 & 0 & 0 & 0 & 0 & N_{i_M}								& 0 & 0 & 0 & 0 & 0 \\
    	\cline{1-12}
    	\l(\gfrac{j_1}{j_1+1}\r) & 0 & 0 & 0 & 0 & 0 & 0			& 0 & 0 & 0 & 0 & 0 \\
    	\l(\gfrac{j_2}{j_2+2}\r) & 0 & 0 & 0 & 0 & 0 & 0			& 0 & 0 & 0 & 0 & 0 \\
    	\vdots & 0 & 0 & 0 & 0 & 0 & 0 										& 0 & 0 & 0 & 0 & 0 \\
		\l(\gfrac{j_{L-1}}{j_{L-1}+1}\r) & 0 & 0 & 0 & 0 & 0 & 0 	& 0 & 0 & 0 & 0 & 0 \\
    	\l(\gfrac{j_L}{j_L+1}\r) & 0 & 0 & 0 & 0 & 0 & 0 	 		& 0 & 0 & 0 & 0 & 0 \\
    	\end{block}
  	\end{blockarray}
	\]

	If there are no simples,
	\[
	\rho(A(\phi))=0.
	\]
	Else, 
	\[
	\rho(A(\phi))=\max\{N_{i_1},\ldots, N_{i_M}\} \leq \max\{N_1, \ldots, N_n\}.
	\]
	
	\vsp
	
	\emph{Case 2: $\pgfrac{n-2}{n}\in \phi, \pgfrac{n-2}{n-1}\not\in \phi.$} 
	
	Then for some $1 \leq j_l \leq n-3, \: 1 \leq i_m \leq n$,
	$$\phi=\l\{ (i_1), \ldots, (i_M), \pgfrac{j_1}{j_1+1}, \ldots, \pgfrac{j_L}{j_L+1}, \pgfrac{n-2}{n} \r\}.$$ 
	
	Suppose that $i_1 \leq \cdots \leq i_M.$ By examination of the $\Hom$ matrix, since $\pgfrac{n-2}{n} \in \phi$ we have $\l(n-2\r),\l(n\r), \pgfrac{n-2}{n-1 \: \: n} \not \in \phi.$ Since $(n) \not \in \phi$, the largest possible value of $i_M$ is $n-1$.
		
	Let $*$ denote a possible non-zero entry. We will show that 
	\[
	A(\phi)=
	\begin{blockarray}{ccccccc|c|ccccc}
    	{} & (i_1) & (i_2) & (i_3) & \cdots & (i_{M-1}) & (i_M) & \pgfrac{n-2}{n} & \l(\gfrac{j_1}{j_1+1}\r) & \l(\gfrac{j_2}{j_2+1}\r) & \cdots & \l(\gfrac{j_{L-1}}{j_{L-1}+1}\r) & \l(\gfrac{j_L}{j_L+1}\r) \\
    	\begin{block}{c(cccccc|c|ccccc@{\hspace*{5pt}})}
    	(i_1) & N_{i_1} & * & 0 & 0 & \cdots & 0 						& 0 & 0 & 0 & 0 & 0 & 0 \\
    	(i_2) & 0 & N_{i_2} & * & 0 & \cdots & 0 						& 0 & 0 & 0 & 0 & 0 & 0 \\
		(i_3) & 0 & 0 & N_{i_3} & * & \cdots & 0							& 0 & 0 & 0 & 0 & 0 & 0 \\
    	\vdots&\vdots&\vdots&\ddots&\ddots&\ddots&0				& 0 & 0 & 0 & 0 & 0 & 0 \\
		(i_{M-1}) & 0 & 0  & 0 & 0 & N_{i_{M-1}} & * 				& 0 & 0 & 0 & 0 & 0 & 0 \\
    	(i_M) &  0 & 0 & 0 & 0 & 0 & N_{i_M}								& 0 & 0 & 0 & 0 & 0 & 0 \\
    	\cline{1-13}
    	\pgfrac{n-2}{n} &  0 & 0 & 0 & 0 & 0 & *						& 0 & 0 & 0 & 0 & 0 & 0 \\
    	\cline{1-13}
    	\l(\gfrac{j_1}{j_1+1}\r) & 0 & 0 & 0 & 0 & 0 & 0			& 0 & 0 & 0 & 0 & 0 & 0 \\
    	\l(\gfrac{j_2}{j_2+2}\r) & 0 & 0 & 0 & 0 & 0 & 0			& 0 & 0 & 0 & 0 & 0 & 0 \\
    	\vdots & 0 & 0 & 0 & 0 & 0 & 0 										& 0 & 0 & 0 & 0 & 0 & 0 \\
		\l(\gfrac{j_{L-1}}{j_{L-1}+1}\r) & 0 & 0 & 0 & 0 & 0 & 0 	& 0 & 0 & 0 & 0 & 0 & 0 \\
    	\l(\gfrac{j_L}{j_L+1}\r) & 0 & 0 & 0 & 0 & 0 & 0 	 		& 0 & 0 & 0 & 0 & 0 & 0 \\
    	\end{block}
  	\end{blockarray},
	\]
	
	We have the top right block matrix because $(n-2) \not \in \phi$. We have the middle row and column since $(n-2),(n) \not \in \phi$. By \cref{eq:ijjplus1}, we have the rest of the $A(\phi)$ matrix.
	
	If there are no simples,
	\[
	\rho(A(\phi))=0.\] 
	Else,
	\[
	\rho(A(\phi))=\max\{N_{i_1}, \ldots, N_{i_M}\} \leq \max\{N_1, \ldots, N_n\}.\]
		
	\vsp

	\emph{Case 3: $\pgfrac{n-2}{n-1}\in \phi, \pgfrac{n-2}{n}\not\in\phi.$}
	
	By symmetry, we can use the proof for Case 2.
	
	\vsp
	
	\emph{Case 4: $\pgfrac{n-2}{n-1},\pgfrac{n-2}{n}\in\phi.$}
	
	Then for some $1 \leq j_l \leq n-3, \: 1 \leq i_m \leq n$,
	$$\phi=\l\{ (i_1), \ldots, (i_M), \pgfrac{j_1}{j_1+1}, \ldots, \pgfrac{j_L}{j_L+1}, \pgfrac{n-2}{n-1},\pgfrac{n-2}{n} \r\}.$$ 
		
	Because $\pgfrac{n-2}{n-1},\pgfrac{n-2}{n}\in\phi,$ it is clear from the $\Hom$ matrix that $(n-2),(n-1),(n),  \pgfrac{n-2}{n-1 \: \: n} \not \in \phi$. Therefore we have the $A(\phi)$ matrix below:
	\[
	\begin{blockarray}{ccccccc|cc|ccccc}
    	{} & (i_1) & (i_2) & (i_3) & \cdots & (i_{M-1}) & (i_M) & \pgfrac{n-2}{n-1} &\pgfrac{n-2}{n} & \l(\gfrac{j_1}{j_1+1}\r) & \l(\gfrac{j_2}{j_2+1}\r) & \cdots & \l(\gfrac{j_{L-1}}{j_{L-1}+1}\r) & \l(\gfrac{j_L}{j_L+1}\r) \\
    	\begin{block}{c(cccccc|cc|ccccc@{\hspace*{5pt}})}
    	(i_1) & N_{i_1} & * & 0 & 0 & \cdots & 0 						& 0 & 0 & 0 & 0 & 0 & 0 & 0 \\
    	(i_2) & 0 & N_{i_2} & * & 0 & \cdots & 0 						& 0 & 0 & 0 & 0 & 0 & 0 & 0\\
		(i_3) & 0 & 0 & N_{i_3} & * & \cdots & 0							& 0 & 0 & 0 & 0 & 0 & 0 & 0\\
    	\vdots&\vdots&\vdots&\ddots&\ddots&\ddots&0				& 0 & 0 & 0 & 0 & 0 & 0& 0 \\
		(i_{M-1}) & 0 & 0  & 0 & 0 & N_{i_{M-1}} & * 				& 0 & 0 & 0 & 0 & 0 & 0& 0 \\
    	(i_M) &  0 & 0 & 0 & 0 & 0 & N_{i_M}								& 0 & 0 & 0 & 0 & 0 & 0& 0 \\
    	\cline{1-14}
    	\pgfrac{n-2}{n-1} &  0 & 0 & 0 & 0 & 0 & 0					& 0 & 0 & 0 & 0 & 0 & 0& 0 \\
    	\pgfrac{n-2}{n} &  0 & 0 & 0 & 0 & 0 & 0						& 0 & 0 & 0 & 0 & 0 & 0& 0 \\
    	\cline{1-14}
    	\l(\gfrac{j_1}{j_1+1}\r) & 0 & 0 & 0 & 0 & 0 & 0			& 0 & 0 & 0 & 0 & 0 & 0& 0 \\
    	\l(\gfrac{j_2}{j_2+2}\r) & 0 & 0 & 0 & 0 & 0 & 0			& 0 & 0 & 0 & 0 & 0 & 0& 0 \\
    	\vdots & 0 & 0 & 0 & 0 & 0 & 0 										& 0 & 0 & 0 & 0 & 0 & 0& 0 \\
		\l(\gfrac{j_{L-1}}{j_{L-1}+1}\r) & 0 & 0 & 0 & 0 & 0 & 0 & 0 & 0 & 0 & 0 & 0 & 0& 0 \\
    	\l(\gfrac{j_L}{j_L+1}\r) & 0 & 0 & 0 & 0 & 0 & 0 	 		& 0 & 0 & 0 & 0 & 0 & 0 & 0\\
    	\end{block}
  	\end{blockarray}
	\]
	
	If there are no simples,
	\[
	\rho(A(\phi))=0.
	\]
	Else,
	\[
	\rho(A(\phi))=\max\{N_{i_1}, \ldots, N_{i_M}\} \leq \max\{N_1, \ldots, N_n\}.
	\]
	
	\vsp
	
	\emph{Case 5: $\pgfrac{n-2}{n-1\:\:n} \in \phi$.}
	
	Then for some $1 \leq j_l \leq n-3, \: 1 \leq i_m \leq n$,
	$$\phi=\l\{ (i_1), \ldots, (i_M), \pgfrac{j_1}{j_1+1}, \ldots, \pgfrac{j_L}{j_L+1}, \pgfrac{n-2}{n-1\:\:n} \r\}.$$ 	
	
	By examination of the $\Hom$ matrix, since $\pgfrac{n-2}{n-1\:\:n} \in \phi$, we cannot have $(n-2),(n-1),(n), \pgfrac{n-2}{n-1}$ or $\pgfrac{n-2}{n}$ in $\phi$. 
	
	\[
	A(\phi)=
	\begin{blockarray}{ccccccc|c|ccccc}
    	{} & (i_1) & (i_2) & (i_3) & \cdots & (i_{M-1}) & (i_M) & \pgfrac{n-2}{n-1\:\:n} & \l(\gfrac{j_1}{j_1+1}\r) & \l(\gfrac{j_2}{j_2+1}\r) & \cdots & \l(\gfrac{j_{L-1}}{j_{L-1}+1}\r) & \l(\gfrac{j_L}{j_L+1}\r) \\
    	\begin{block}{c(cccccc|c|ccccc@{\hspace*{5pt}})}
    	(i_1) & N_{i_1} & * & 0 & 0 & \cdots & 0 						& 0 & 0 & 0 & 0 & 0 & 0 \\
    	(i_2) & 0 & N_{i_2} & * & 0 & \cdots & 0 						& 0 & 0 & 0 & 0 & 0 & 0 \\
		(i_3) & 0 & 0 & N_{i_3} & * & \cdots & 0							& 0 & 0 & 0 & 0 & 0 & 0 \\
    	\vdots&\vdots&\vdots&\ddots&\ddots&\ddots&0				& 0 & 0 & 0 & 0 & 0 & 0 \\
		(i_{M-1}) & 0 & 0  & 0 & 0 & N_{i_{M-1}} & * 				& 0 & 0 & 0 & 0 & 0 & 0 \\
    	(i_M) &  0 & 0 & 0 & 0 & 0 & N_{i_M}								& * & 0 & 0 & 0 & 0 & 0 \\
    	\cline{1-13}
    	\pgfrac{n-2}{n-1\:\:n} &  0 & 0 & 0 & 0 & 0 & 0						& 0 & 0 & 0 & 0 & 0 & 0 \\
    	\cline{1-13}
    	\l(\gfrac{j_1}{j_1+1}\r) & 0 & 0 & 0 & 0 & 0 & 0			& 0 & 0 & 0 & 0 & 0 & 0 \\
    	\l(\gfrac{j_2}{j_2+2}\r) & 0 & 0 & 0 & 0 & 0 & 0			& 0 & 0 & 0 & 0 & 0 & 0 \\
    	\vdots & 0 & 0 & 0 & 0 & 0 & 0 										& 0 & 0 & 0 & 0 & 0 & 0 \\
		\l(\gfrac{j_{L-1}}{j_{L-1}+1}\r) & 0 & 0 & 0 & 0 & 0 & 0 	& 0 & 0 & 0 & 0 & 0 & 0 \\
    	\l(\gfrac{j_L}{j_L+1}\r) & 0 & 0 & 0 & 0 & 0 & 0 	 		& 0 & 0 & 0 & 0 & 0 & 0 \\
    	\end{block}
  	\end{blockarray}
	\]
	
	This can be checked by examination of the $\Ext^1$ matrix. If there are no simples,
	\[
	\rho(A(\phi))=0.
	\]
	Else,
	\[
	\rho(A(\phi))=\max\{N_{i_1}, \ldots, N_{i_M}\} \leq \max\{N_1, \ldots, N_n\}.
	\]
	
	\vsp
	
	We have exhausted all cases (and possibly the reader). 
	\end{proof}
	
	\vsp
	

	\subsection{$E(6), E(7), E(8)$ algebras}
	
	\begin{theorem} \label{thm:En1dir}
	Suppose that $n \in \{6,7,8\}$, and let $E(n)$ be the finite dimensional radical square zero algebra described by the following quiver $Q$ with $N_i$ loops (labeled $a_i^l$ for $l=1, \ldots, N_i$) at vertex i:
	
	\[
	\begin{tikzcd}
	{} & {} & 2 \arrow[out=65, in=115, loop, swap, "a_{2}^l"] & {}  \\
	1   \arrow[r, "x_3"]  \arrow[out=252, in=-72, loop] \arrow[out=245, in=-65, loop] \arrow[out=238, in=-58, loop, swap, "a_1^l"] & 3 \arrow[out=245, in=-65, loop, swap, "a_3^l"] \arrow[r, "x_4"] & 4 \arrow[out=252, in=-72, loop] \arrow[out=245, in=-65, loop] \arrow[out=238, in=-58, loop, swap, "a_{4}^l"] \arrow[r, "x_5"] \arrow[u, "x_{2}"] & \cdots \arrow[r, "x_n"] & n  \arrow[out=245, in=-65, loop] \arrow[out=238, in=-58, loop, swap, "a_n^l"]
	\end{tikzcd} 
	\]
	
	Then
	\[
	\fpd \l(E(n)\dash\bmod\r) = \max\{N_1, \ldots, N_n\}.
	\]
	\end{theorem}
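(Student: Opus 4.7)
The plan is to mimic the strategy used for \cref{thm:A_n1dir} and \cref{thm:Dn1dir}, handling the three cases $n=6,7,8$ uniformly. Write $E(n)=\kk Q/(\geq 2)$, let $J$ be the ideal of loops, put $B:=E(n)/J$ and $C:=\kk Q'$ where $Q'$ is the underlying $E_n$ quiver. By \cref{lem:quivpowerful} the brick $E(n)$-modules correspond to brick $C$-modules annihilated by $(\geq 2)$, and Hom-spaces may be computed inside $C$.

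First I would enumerate the candidate bricks. By Gabriel's theorem, the indecomposable $C$-modules are indexed by the positive roots of $E_n$, and \cref{fct:collection} \cref{prp:notannihgeqn}, \cref{prp:indreps} together with \cref{prp:notindifannih} and \cref{lem:notDnindreps} rule out any root whose dimension vector either contains three consecutive $1$'s along the arm $1\to 3\to 4\to 5\to\cdots\to n$, or yields a branch configuration at vertex $4$ analogous to the bad $D_n$ configuration (a multiplicity $\geq 2$ at the branch vertex, or the simultaneous non-vanishing of $x_2,x_4,x_5$ from a single two-dimensional $V_4$ with composable zero paths). What survives is precisely the simples $(1),\ldots,(n)$ together with the length-two strings: the indecomposable projectives and injectives of $B$, namely $\tilde P_1=\pgfrac{1}{3}$, $\tilde P_3=\pgfrac{3}{4}$, $\tilde P_4=\pgfrac{4}{2\ \ 5}$, $\tilde P_i=\pgfrac{i}{i+1}$ for $5\le i\le n-1$, together with $\tilde I_2=\pgfrac{4}{2}$, $\tilde I_5=\pgfrac{4}{5}$ (one checks case-by-case just as in \cref{prp:bricksDn}). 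Each of these is a brick because its endomorphism ring is $e_iBe_i=\kk$.

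Next I would assemble the $\Hom$ matrix using \cref{fct:collection} \cref{prp:dimhoms1}, \cref{prp:dimhoms2}: the entries between simples and projectives/injectives reduce to $\delta_{ij}$, and the entries among the projectives/injectives reduce to counting paths in $Q'$. I would then build the $\Ext^1$ matrix using \cref{lem:rad} (which makes the blocks $\Ext^1(\tilde P_i,\tilde P_j)$ and $\Ext^1(\tilde I_i,\tilde I_j)$ vanish outside the diagonal, and likewise for the simple/projective and simple/injective blocks with $i\ne j$), together with \cref{lem:ext1projnohom} and \cref{lem:sinknohom} to kill the remaining off-diagonal entries between a simple $(i)$ and the "long" injective $\tilde P_4=\pgfrac{4}{2\ \ 5}$ whenever $i\not\in\{2,4,5\}$. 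The diagonal entries $\Ext^1((i),(i))=N_i$ come from \cref{fct:collection} \cref{itm:dimextsimples}.

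Finally I would run a case analysis on which of the "branch" bricks $\pgfrac{4}{2}$, $\pgfrac{4}{5}$, $\pgfrac{4}{2\ \ 5}$ belong to a given brick set $\phi$, exactly parallel to Cases 1--5 in the proof of \cref{thm:Dn1dir}. In every case the $\Hom$ matrix forces $(4)\not\in\phi$ as soon as any length-two branch brick appears, and the non-branch strings $\pgfrac{j}{j+1}$ only pair with a simple $(i)$ when $i\ne j, j{+}1$, which kills the corresponding $\Ext^1$ entries. Consequently $A(\phi)$ is block upper-triangular with the simples' block equal to a matrix whose diagonal is $(N_{i_1},\ldots,N_{i_M})$ and whose strict upper entries lie in $\{0,1\}$, while the remaining blocks are strictly upper-triangular and nilpotent. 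Hence
\[
\rho(A(\phi))=\max\{N_{i_1},\ldots,N_{i_M}\}\leq \max\{N_1,\ldots,N_n\},
\]
with equality attained by the all-simples brick set $\tilde\phi=\{(1),\ldots,(n)\}$, giving $\fpd(E(n)\dash\bmod)=\max\{N_1,\ldots,N_n\}$.

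The main obstacle is the bookkeeping in step two: confirming that the $E_n$ positive roots involving the branch vertex $4$ (in particular roots of the form $\varepsilon_i+\varepsilon_j$ translated through the $E_n$ base) fail $(\geq 2)$-annihilation, which requires an $E_n$-analogue of \cref{lem:notDnindreps}. Once that enumeration is settled, the Hom/Ext computation and the case analysis are mechanical adaptations of the $D(n)$ argument.
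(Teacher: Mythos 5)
Your proposal is correct and follows essentially the same route as the paper: reduce to $C$-modules annihilated by $(\ge 2)$ via \cref{lem:quivpowerful}, enumerate the surviving positive roots of $E_n$ (which the paper handles in \cref{prp:Eninds}, precisely the ``$E_n$-analogue of \cref{lem:notDnindreps}'' you flag as the bookkeeping step), assemble the $\Hom$ and $\Ext^1$ matrices using \cref{fct:collection}, \cref{lem:rad}, \cref{lem:ext1projnohom}, \cref{lem:sinknohom}, and then a five-case analysis on the branch bricks $\pgfrac{4}{2},\pgfrac{4}{5},\pgfrac{4}{2\:\:5}$ showing $A(\phi)$ is (after reordering the simples as $(1),(3),(4),(2),(5),\ldots,(n)$) a principal submatrix of an upper triangular matrix with diagonal entries in $\{0,N_1,\ldots,N_n\}$. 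The only cosmetic slip is calling $\pgfrac{4}{2\:\:5}$ a ``length-two string,'' but your identification of it as $\tilde P_4$ is correct and the argument is unaffected.
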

		
	\vsp
	
	\begin{proof}
	
	We will reuse some notation from the previous theorem. We have $E(n)=\kk Q/(\geq 2).$ Let $J$ be the ideal generated by the loops, and define $B:=E(n)/J.$ In other words, $B$ is the radical square zero algebra described by the following quiver $Q'$:
	
	\[
	\begin{tikzcd}
	{} & {} & 2 & {}  \\
	1   \arrow[r, "x_3"]  & 3  \arrow[r, "x_4"] & 4   \arrow[r, "x_5"] \arrow[u, "x_{2}"] & \cdots \arrow[r, "x_n"] & n	
	\end{tikzcd} 
	\]	
	
	We can see that $B=C/(\geq 2),$ where $C:=\kk Q'$.
	
	By \cref{lem:quivpowerful}, we have a one-to-one correspondence between bricks of $E(n)$ and bricks of $C$ annihilated by $(\geq 2)$, as well as the following natural isomorphism:
	\[
	\Hom_{E(n)}(M,N) \cong \Hom_{B}(M,N) \cong \Hom_{C}(M,N).
	\]
	Hence it suffices to compute the dimensions of $\Hom_{E(n)}$-spaces in $B$ or $C$. 
	
	The indecomposable modules of $C$ are well known by Gabriel's theorem \cite{Gabriel}. The dimension vectors of these modules correspond precisely to the positive roots for the $E_n$ root system, which are described in \cite[p. 64]{Humphreys}. An explicit description of the positive roots for $E_6, E_7, E_8$ is given in \cite{E6roots,E7roots,E8roots}.
	
	We will show that the only indecomposable $C$-modules annihilated by $(\geq 2)$ are:
	\begin{equation} \label{eq:Eninds}
	\l\{\l(1\r), \l(2\r), \ldots, \l(n\r), \pgfrac{1}{3}, \pgfrac{3}{4}, \pgfrac{5}{6}, \ldots,\pgfrac{n-1}{n}, \pgfrac{4}{2}, \pgfrac{4}{5}, \pgfrac{4}{2 \:\: 5}\r\}.
	\end{equation}
	
	This follows from examining the positive roots for $E_n$ with respect to the next proposition.
	
		\begin{proposition} \label{prp:Eninds}
		The following indecomposable representations of $C$ are not annihilated by $(\geq 2)$ for any $l,m,k \geq 1$ for any entries $*$:

		\begin{align}
		& (l, *, m,k, *, \ldots, *) \label{eq:25} \\
		& (*,*,*,l,m,k,*, \ldots, *) \\
		& (*,*,*,l,m,k) \text{ for }N=6\\
		& (*,*,*,*,l,m,k) \text{ for }N=7 \\
		& (*,*,*,*,*,l,m,k) \text{ for }N=8 \\
		& (*,*,*,*,l,m,k, *) \text{ for }N=8 \label{eq:30} \\
		& (*,1,1,2,1,*, \ldots, *) \label{eq:31} \\
		& (*,1,1,1,*, \ldots, *) \label{eq:32} \\
		& (*,*, 1,1,1, *, \ldots, *). \label{eq:33}
		\end{align}
		\end{proposition}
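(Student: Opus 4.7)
The plan is a case-by-case analysis in which, for each of the nine dim-vector patterns, I exhibit a length-two path in $Q'$ on which the composition of the representation is nonzero; then \cref{fct:collection} \cref{prp:notannihgeqn} concludes that $V$ is not annihilated by $(\geq 2)$. The principal tools are \cref{fct:collection} \cref{prp:indreps} (a disconnecting arrow forces its map to be nonzero between nonzero endpoints in an indecomposable representation), \cref{prp:notindifannih} (a linear bottleneck of dimension at least $2$ with vanishing composition must decompose), and, for the hardest case, a new idempotent argument modeled on \cref{lem:notDnindreps}.

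For the generic patterns \cref{eq:25} through \cref{eq:30}, the constraints $l, m, k \geq 1$ place three consecutive nonzero dimensions along the linear portion of $Q'$ (vertices $1, 3, 4$ for \cref{eq:25}; vertices $4, 5, 6$ for \cref{eq:26}; and likewise the final three vertices along the tail in \cref{eq:27} through \cref{eq:30}). In each such triple the middle vertex has exactly one incoming and one outgoing arrow in $Q'$, both disconnecting, so \cref{fct:collection} \cref{prp:indreps} yields both maps nonzero. If the middle dimension is $1$, the composition is automatically nonzero; otherwise \cref{prp:notindifannih} says that vanishing composition contradicts indecomposability. In either event \cref{fct:collection} \cref{prp:notannihgeqn} closes the case. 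Cases \cref{eq:32} and \cref{eq:33} follow the same scheme via the path $3 \to 4 \to 5$ (for \cref{eq:33}) or $1 \to 3 \to 4$ and $3 \to 4 \to 5$ (for \cref{eq:32}, using whichever of the unlabeled $d_1, d_5$ is nonzero); although \cref{prp:notindifannih} does not strictly apply at vertex $4$ because of the extra incoming arrow from $2$, the all-$1$-dimensional setup makes the composition automatically nonzero.

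The main obstacle is case \cref{eq:31}, where $d_2 = d_3 = d_5 = 1$ and $d_4 = 2$: vertex $4$ is a branching bottleneck of dimension $2$, so \cref{prp:notindifannih} does not apply, and the dimension jump rules out a purely $1$-dimensional argument. My plan is to prove an ad hoc analog of \cref{lem:notDnindreps}: suppose toward contradiction that both compositions $f_{x_5} \circ f_{x_2}$ and $f_{x_5} \circ f_{x_4}$ vanish. Since $f_{x_5} : \kk^2 \to \kk$ is nonzero by \cref{fct:collection} \cref{prp:indreps}, its kernel is a $1$-dimensional subspace of $V_4$ containing both $\im f_{x_2}$ and $\im f_{x_4}$. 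Pick a basis $e_1, e_2$ of $V_4$ with $\ker f_{x_5} = \kk e_1$, and define an endomorphism $\varphi : V \to V$ by $\varphi_j = \Id$ for $j \in \{1, 2, 3\}$, $\varphi_j = 0$ for $j \geq 5$, and $\varphi_4(a_1 e_1 + a_2 e_2) = a_1 e_1$. The image containments ensure $\varphi$ commutes with the two incoming arrows at $4$, and the kernel condition ensures commutativity with $x_5$; on all other arrows commutativity is immediate. Thus $\varphi$ is a nontrivial idempotent of $V$, so $V$ decomposes by \cite[Cor. 1.7.3]{Drozd}, contradicting indecomposability. Hence one composition is nonzero, and \cref{fct:collection} \cref{prp:notannihgeqn} finishes the case.
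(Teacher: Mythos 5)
Your overall strategy matches the paper's: lengths-two-path arguments via \cref{fct:collection}\cref{prp:notannihgeqn} and \cref{prp:indreps} for the cases with middle dimension $1$, \cref{prp:notindifannih} for the generic cases with middle dimension $\geq 2$, and an ad hoc idempotent for \cref{eq:31} modeled on \cref{lem:notDnindreps}. However, there are two genuine gaps, both stemming from misreading the orientation of the arrow $x_2$, which in the $E_n$ quiver of \cref{def:modifiedADE} points \emph{from} vertex $4$ \emph{to} vertex $2$; so $x_2$ is an outgoing arrow at $4$, and vertex $2$ is downstream of $4$.

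First, your idempotent for \cref{eq:31} is not an endomorphism. You want to assume toward a contradiction that the two compositions $f_{x_2}\circ f_{x_4}$ and $f_{x_5}\circ f_{x_4}$ vanish (the expression $f_{x_5}\circ f_{x_2}$ does not typecheck, and $\im f_{x_2}$ lies in $V_2$, not $V_4$). Under that assumption $\im f_{x_4}\subset\ker f_{x_2}\cap\ker f_{x_5}$, and since $f_{x_4},f_{x_2},f_{x_5}$ are all nonzero by \cref{fct:collection}\cref{prp:indreps} and the spaces involved have dimension at most $2$, one gets $\im f_{x_4}=\ker f_{x_2}=\ker f_{x_5}=\kk e_1$. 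With your choice $\varphi_4(a_1e_1+a_2e_2)=a_1e_1$ and $\varphi_2=\Id$, commutativity along $x_2$ demands $\varphi_2\circ f_{x_2}=f_{x_2}\circ\varphi_4$; but $f_{x_2}\circ\varphi_4(a_1e_1+a_2e_2)=a_1f_{x_2}(e_1)=0$ while $\varphi_2\circ f_{x_2}=f_{x_2}\neq0$. So $\varphi$ is not a morphism. The fix, consistent with \cref{lem:notDnindreps}, is to put $\varphi_j=0$ on the vertices upstream of $4$ (namely $1$ and $3$) and $\varphi_j=\Id$ on those downstream (namely $2,5,\dots,n$), with $\varphi_4$ killing $\im f_{x_4}$; this makes $\varphi$ a nontrivial idempotent, and the contradiction goes through.

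Second, your treatment of \cref{eq:32} does not cover the legitimate case $d_1=d_5=\cdots=d_n=0$. The dimension vector $(0,1,1,1,0,\dots,0)$ is in fact a positive root of $E_n$ (the Tits form evaluates to $1$), so the corresponding indecomposable must be handled, yet neither of your proposed paths $1\to3\to4$ or $3\to4\to5$ exists on its support. The correct path for \cref{eq:32} is $3\to4\to2$ (that is, $x_2x_4$), which always lies inside the support since $d_2=d_3=d_4=1$; both $f_{x_4}$ and $f_{x_2}$ are nonzero by \cref{fct:collection}\cref{prp:indreps}, the spaces are all one-dimensional, so the composite is nonzero. With this change (and the $\varphi_2$ fix above), your argument agrees with the paper's and is complete.
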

	
		\begin{proof}
		Now in the case of $m=1$, \cref{eq:25} through \cref{eq:30} follow from \cref{fct:collection} \cref{prp:notannihgeqn} and \cref{prp:indreps}, since all of these representations have a path $\beta \alpha$ of length $2$ such that $f_{\beta} \circ f_{\alpha} \not =0$. We also have \cref{eq:32,eq:33} for the same reason. By \cref{prp:notindifannih}, we have \cref{eq:25} through \cref{eq:30} for $m\geq 2$. 
		
		The proof of \cref{eq:31} is almost identical to the proof of \cref{lem:notDnindreps} \cref{eq:notDnindreps2}, so we omit it.
		\end{proof}
	
	The indecomposables given in \cref{eq:Eninds} are bricks of $E(n)$ by examination of the following $\Hom$ matrix, where $H_{ij}=\dim \Hom_{E(n)}(X_i,X_j)$ for some brick objects $X_i, X_j$ of $E(n)\dash\bmod$:
	
	\[
  	\begin{blockarray}{ccccccccc|ccccc|cc|c}
    	{} & (1) & (2) & (3) & (4) & (5) & \cdots & (n-1) & (n) & \l(\gfrac{1}{3}\r) & \l(\gfrac{3}{4}\r) & \pgfrac{5}{6} & \cdots & \pgfrac{n-1}{n} & \l(\gfrac{4}{2}\r) & \pgfrac{4}{5} & \pgfrac{4}{2 \: \: 5} \\
    	\begin{block}{c(cccccccc|ccccc|cc|c@{\hspace*{5pt}})}
    	(1) & 1 & 0 & 0 & 0 & 0 & 0 & 0 & 0 & 0 & 0 & 0 & 0 & 0 & 0 & 0 & 0 \\
    	(2) & 0 & 1 & 0 & 0 & 0 & 0 & 0 & 0 & 0 & 0 & 0 & 0 & 0 & 1 & 0 & 1 \\
	(3) & 0 & 0 & 1 & 0 & 0 & 0 & 0 & 0 & 1 & 0 & 0 & 0 & 0 & 0 & 0 & 0 \\
	(4) & 0 & 0 & 0 & 1 & 0 & 0 & 0 & 0 & 0 & 1 & 0 & 0 & 0 & 0 & 0 & 0\\
    	(5) & 0 & 0 & 0 & 0 & 1 & \ddots & 0 & 0 & 0 & 0 & 0 & 0 & 0 & 0 & 1 & 1\\
	\vdots & 0 & 0  & 0 & 0 & 0 & \ddots & \ddots & 0 & 0 & 0 & 1 & 0 & 0 & 0 & 0 & 0 \\
	\vdots & 0 & 0  & 0 & 0 & 0 & 0 & \ddots & 0 & 0 & 0 & 0 & \ddots & 0 & 0 & 0 & 0 \\
	(n) &  0 & 0 & 0 & 0 & 0 & 0 & 0 & 1 & 0 & 0 & 0 & 0 & 1 & 0 & 0 & 0 \\
    	\cline{1-17}
    	\pgfrac{1}{3} & 1 & 0 & 0 & 0 & 0 & 0 & 0 & 0 & 1 & 0 & 0 & 0 & 0 & 0 & 0 & 0 \\
    	\l(\gfrac{3}{4}\r) & 0 & 0 & 1 & 0 & 0 & 0 & 0 & 0 & 1 & 1 & 0 & 0 & 0 & 0 & 0 & 0\\
    	\pgfrac{5}{6} & 0 & 0 & 0 & 0 & 1 & 0 & 0 & 0 & 0 & 0 & 1 & 0 & 0 & 0 & 1 & 1 \\
	\vdots & 0 & 0 & 0 & 0 & 0 & \ddots & 0 & 0 & 0 & 0 & 1 & \ddots & 0 & 0 & 0 & 0\\
	\pgfrac{n-1}{n} & 0 & 0 & 0 & 0 & 0 & 0 & 1 & 0 & 0 & 0 & 0 & 1 & 1 & 0 & 0 & 0\\
	\cline{1-17}
    	\l(\gfrac{4}{2}\r) & 0 & 0 & 0 & 1 & 0 & 0 & 0 & 0 & 0 & 1 & 0 & 0 & 0 & 1 & 0 & 0\\
	\l(\gfrac{4}{5}\r) & 0 & 0 & 0 & 1 & 0 & 0 & 0 & 0 & 0 & 1 & 0 & 0 & 0 & 0 & 1 & 0\\
	\cline{1-17}
	\l(\gfrac{4}{2 \: \: 5}\r) & 0 & 0 & 0 & 1 & 0 & 0 & 0 & 0 & 0 & 1 & 0 & 0 & 0 & 1 & 1 & 1\\
    	\end{block}
  	\end{blockarray}
	\]
	
	To calculate the entries of the $\Hom$ matrix, recall from \cref{fct:collection} \cref{prp:dimhoms1} and \cref{prp:dimhoms2} that
	\begin{align*}
	\dim \Hom_{C}(\tilde{P_i},\tilde{S_j})&= \delta_{ij}, \\
	\dim \Hom_{C}(\tilde{S_i},\tilde{I_j})&=\delta_{ij}, \\
	\dim \Hom_{C}(\tilde{I_j},\tilde{I_i})&= \text{ the number of paths from }i\rightarrow j \text{ in }C = \dim \Hom_{C}(\tilde{P_j},\tilde{P_i}).
	\end{align*}

	Furthermore, if there is no overlap between the two representations (that is, one representation has non-zero vector spaces only at vertices $i_1, \ldots, i_M$ and the other representation has non-zero vector spaces only at vertices $j_1, \ldots, j_L$ such that $i_m \not= j_l$ for any $m,l$), then there are no morphisms between them. The remaining $\Hom$ spaces can be calculated:
	
	\begin{align*}
	\dim\Hom_{C}\l( \l(4\r), \pgfrac{4}{2\:\: 5}\r)=0 \\
	\dim\Hom_{C}\l( \l(5\r), \pgfrac{4}{2\:\: 5}\r)=1 \\
	\dim\Hom_{C}\l( \pgfrac{4}{2},\l(2\r) \r)=0 \\
	\dim\Hom_{C}\l( \pgfrac{4}{2},\l(4\r)\r)=1 \\
	\dim\Hom_{C}\l( \pgfrac{4}{5},\l(4\r)\r)=1 \\
	\dim\Hom_{C}\l(\pgfrac{4}{5},\l(5\r) \r)= 0 \\
	\dim\Hom_{C}\l( \pgfrac{4}{2}, \pgfrac{4}{2\:\: 5}\r)=0 \\
	\dim\Hom_{C}\l( \pgfrac{4}{5}, \pgfrac{4}{2\:\: 5}\r)=0 \\
	\dim\Hom_{C}\l( \pgfrac{4}{2\:\: 5},\pgfrac{4}{2} \r)= 1\\
	\dim\Hom_{C}\l( \pgfrac{4}{2\:\: 5},\pgfrac{4}{5} \r)= 1.\\
	\end{align*}
	
	We have the following $\Ext^1$ matrix, where using the same notation as before, $E_{ij}=\dim \Ext^1_{E(n)}(X_i,X_j),$ and $*$ denotes a possible non-zero entry:
	
	\[
  	\begin{blockarray}{ccccccccc|ccccc|cc|c}
    	{} & (1) & (2) & (3) & (4) & (5) & \cdots & (n-1) & (n) & \l(\gfrac{1}{3}\r) & \l(\gfrac{3}{4}\r) & \pgfrac{5}{6} & \cdots & \pgfrac{n-1}{n} & \l(\gfrac{4}{2}\r) & \pgfrac{4}{5} & \pgfrac{4}{2 \: \: 5} \\
    	\begin{block}{c(cccccccc|ccccc|cc|c@{\hspace*{5pt}})}
    	(1) & N_1 & 0 & 1 & 0 & 0 & 0 & 0 & 0 & 0 & 0 & 0 & 0 & 0 & 0 & 0 & * \\
    	(2) & 0 & N_2 & 0 & 0 & 0 & 0 & 0 & 0 & 0 & 0 & 0 & 0 & 0 & * & 0 & * \\
	(3) & 0 & 0 & N_3 & 1 & 0 & 0 & 0 & 0 & * & 0 & 0 & 0 & 0 & 0 & 0 & * \\
	(4) & 0 & 1 & 0 & N_4 & 1 & 0 & 0 & 0 & 0 & * & 0 & 0 & 0 & 0 & 0 & *\\
    	(5) & 0 & 0 & 0 & 0 & N_5 & \ddots & 0 & 0 & 0 & 0 & 0 & 0 & 0 & 0 & * & *\\
	\vdots & 0 & 0  & 0 & 0 & 0 & \ddots & \ddots & 0 & 0 & 0 & * & 0 & 0 & 0 & 0 & * \\
	\vdots & 0 & 0  & 0 & 0 & 0 & 0 & \ddots & 1 & 0 & 0 & 0 & \ddots & 0 & 0 & 0 & * \\
	(n) &  0 & 0 & 0 & 0 & 0 & 0 & 0 & N_n & 0 & 0 & 0 & 0 & * & 0 & 0 & *\\
    	\cline{1-17}
    	\pgfrac{1}{3} & * & 0 & 0 & 0 & 0 & 0 & 0 & 0 & 0 & 0 & 0 & 0 & 0 & 0 & 0 & 0 \\
    	\l(\gfrac{3}{4}\r) & 0 & 0 & * & 0 & 0 & 0 & 0 & 0 & 0 & 0 & 0 & 0 & 0 & 0 & 0 & 0\\
    	\pgfrac{5}{6} & 0 & 0 & 0 & 0 & * & 0 & 0 & 0 & 0 & 0 & 0 & 0 & 0 & 0 & 0 & 0 \\
	\vdots & 0 & 0 & 0 & 0 & 0 & \ddots & 0 & 0 & 0 & 0 & 0 & 0 & 0 & 0 & 0 & 0\\
	\pgfrac{n-1}{n} & 0 & 0 & 0 & 0 & 0 & 0 & * & 0 & 0 & 0 & 0 & 0 & 0 & 0 & 0 & 0\\
	\cline{1-17}
    	\l(\gfrac{4}{2}\r) & 0 & 0 & 0 & * & * & 0 & 0 & 0 & 0 & 0 & 0 & 0 & 0 & 0 & 0 & *\\
	\l(\gfrac{4}{5}\r) & 0 & * & 0 & * & 0 & 0 & 0 & 0 & 0 & 0 & 0 & 0 & 0 & 0 & 0 & *\\
	\cline{1-17}
	\l(\gfrac{4}{2 \: \: 5}\r) & 0 & 0 & 0 & * & 0 & 0 & 0 & 0 & 0 & 0 & 0 & 0 & 0 & * & * & 0\\
    	\end{block}
  	\end{blockarray}
	\]
	
	Recall the results of \cref{lem:rad,lem:ext1projnohom,lem:sinknohom}:
	\begin{align*}
	&\Ext^1_{E(n)}(\tilde{P_i},\tilde{S_j})=0=\Ext^1_{E(n)}(\tilde{S_i},\tilde{I_j}) \text{ if }i\not=j \\
	&\Ext^1_{E(n)}(\tilde{P_i},\tilde{P_j})=0=\Ext^1_{E(n)}(\tilde{I_i},\tilde{I_j}) \text{ if they are not simple} \\
	&\Ext^1_{E(n)}(\tilde{P_i},M)=0 \text{ if }\Hom_{E(n)}(P_i,M)=0 \\
	&\Ext^1_{E(n)}(\tilde{S_i},M)=0 \text{ if }i\text{ is a sink in }Q'\text{ and }\Hom_{E(n)}(P_i,M)=0 \\
	&\dim \Ext^1_{E(n)}((i),(j))= \text{ the number of arrows }i \rightarrow j.
	\end{align*}
	
	These equations give us all the zero entries of the $\Ext^1$ matrix, except
	\begin{align}
	\Ext^1_{E(n)}\l(\pgfrac{4}{2},(i)\r)&=0, \quad i \not = 4,5 \\
	\Ext^1_{E(n)}\l(\pgfrac{4}{5},(i)\r)&=0, \quad i \not = 2,4.
	\end{align}
	
	Let us prove this. As a vector space,
	
	\[
	\pgfrac{4}{2}=\frac{\kk e_4 \oplus \kk x_2 \oplus \kk x_5 \oplus \kk a_4^1 \oplus \cdots \oplus \kk a_4^{N_4}}{\kk x_5 \oplus \kk a_4^1 \oplus \cdots \oplus \kk a_4^{N_4}}.
	\]
	
	We have the following projective resolution of $\pgfrac{4}{2}:$
	
	\[
	\begin{tikzcd}
	\cdots \arrow[r] & P_5 \oplus P_4^{N_4} \arrow[r] \arrow[rd] & P_4 \arrow[r] & \pgfrac{4}{2} \arrow[r] & 0 \\
	{} & {} & \kk x_5 \oplus \kk a_4^1 \oplus \cdots \oplus \kk a_4^{N_4} \arrow[hookrightarrow]{u} & {} & {} & {}
	\end{tikzcd}
	\]
	
	Take $\Hom_{E(n)}(-,(i))=\Hom_{E(n)}(-,S_i)$:
	
	\[
	\begin{tikzcd}
	0 \arrow[r] & \Hom_{E(n)}(P_4, S_i) \arrow[r] & \Hom_{E(n)}(P_5,S_i) \oplus \Hom_{E(n)}(P_4,S_i)^{N_4} \arrow[r] & \cdots
	\end{tikzcd}
	\]
	
	If $i\not= 4, 5$, then $\Hom_{E(n)}(P_5,S_i) \oplus \Hom_{E(n)}(P_4,S_i)^{N_4}=0,$ so $\Ext^1_{E(n)}\l(\pgfrac{4}{2},(i) \r)=0.$
	
	\vsp
	
	Similarly, as a vector space we have
	\[
	\pgfrac{4}{5}=\frac{\kk e_4 \oplus \kk x_2 \oplus \kk x_5 \oplus \kk a_4^1 \oplus \cdots \oplus \kk a_4^{N_4}}{\kk x_2 \oplus \kk a_4^1 \oplus \cdots \oplus \kk a_4^{N_4}}.
	\]
	
	We have the following projective resolution of $\pgfrac{4}{2}:$
	
	\[
	\begin{tikzcd}
	\cdots \arrow[r] & P_2 \oplus P_4^{N_4} \arrow[r] \arrow[rd] & P_4 \arrow[r] & \pgfrac{4}{5} \arrow[r] & 0 \\
	{} & {} & \kk x_2 \oplus \kk a_4^1 \oplus \cdots \oplus \kk a_4^{N_4} \arrow[hookrightarrow]{u} & {} & {} & {}
	\end{tikzcd}
	\]
	
	Take $\Hom_{E(n)}(-,(i))=\Hom_{E(n)}(-,S_i)$:
	
	\[
	\begin{tikzcd}
	0 \arrow[r] & \Hom_{E(n)}(P_4, S_i) \arrow[r] & \Hom_{E(n)}(P_2,S_i) \oplus \Hom_{E(n)}(P_4,S_i)^{N_4} \arrow[r] & \cdots
	\end{tikzcd}
	\]
	
	If $i\not= 2,4$, then $\Hom_{E(n)}(P_2,S_i) \oplus \Hom_{E(n)}(P_4,S_i)^{N_4}=0,$ so $\Ext^1_{E(n)}\l(\pgfrac{4}{5},(i) \r)=0.$
	
	\vsp
	
	{\bf Claim.}
	For any brick set $\phi,$ $\rho(A(\phi)) \leq \max\{N_1, \ldots, N_n\}$. Because $\phi = \{(1), \ldots, (n)\}$ is a brick set and $\rho(A(\phi))=\max\{N_1, \ldots, N_n\}$, we must have $\fpd \l(E(n)\dash\bmod\r)=\max\{N_1, \ldots, N_n\}.$
	
	\vsp
	
	Before we begin, note that by examination of the $\Hom$ and $\Ext^1$ matrices, if $(i) \in \l\{(1), \ldots, (n)\r\}$ and $\pgfrac{j}{k} \in \l\{ \pgfrac{1}{3}, \pgfrac{3}{4}, \pgfrac{5}{6}, \ldots, \pgfrac{n-1}{n}\r\}$ are in the same brick set, then 
	$$\Ext^1_{E(n)}\l((i),\pgfrac{j}{k}\r)=0=\Ext^1_{E(n)}\l(\pgfrac{j}{k},(i)\r).$$
	
	\vsp
	
	\emph{Case 1: $\pgfrac{4}{2},\pgfrac{4}{5}, \pgfrac{4}{2\:\:5} \not \in \phi$.}
	
	\vsp
	
	Then 
	$$\phi=\l\{(i_1), \ldots, (i_M), \pgfrac{j_1}{k_1}, \ldots, \pgfrac{j_L}{k_L}\r\},$$
	where 
	$$(i_m) \in \l\{(1), \ldots, (n)\r\}, \quad \pgfrac{j_l}{k_l} \in \l\{ \pgfrac{1}{3}, \pgfrac{3}{4}, \pgfrac{5}{6}, \ldots, \pgfrac{n-1}{n}\r\}.$$
	
	Then $A(\phi)$ is a principal submatrix of the following upper triangular matrix, whose eigenvalues are contained in the set $\{0, N_1, \ldots, N_n\}$. Therefore, $\rho(A(\phi)) \leq \max\{N_1, \ldots, N_n\}$.
	
	\[
  	\begin{blockarray}{cccccccc|ccc}
    	{} & (1) & (3) & (4) & (2) & (5) & \cdots & (n) & \pgfrac{j_1}{k_1} & \cdots & \pgfrac{j_L}{k_L}  \\
    	\begin{block}{c(ccccccc|ccc)}
    	(1) & N_1 & 1 & 0 & 0 & 0 & 0 & 0 & 0 & 0 & 0  \\
    	(3) & 0 & N_3 & 1 & 0 & 0 & 0 & 0 & 0 & 0 & 0  \\
	(4) & 0 & 0 & N_4 & 1 & 1 & 0 & 0 & 0 & 0 & 0  \\
	(2) & 0 & 0 & 0 & N_2 & 0 & 0 & 0 & 0 & 0 & 0  \\
    	(5) & 0 & 0 & 0 & 0 & N_5 & * & * & 0 & 0 & 0  \\
    	\vdots & 0 & 0 & 0 & 0 & 0 & \ddots & * & 0 & 0 & 0 \\
	(n) & 0 & 0 & 0 & 0 & 0 & 0 & N_n & 0 & 0 & 0 \\
    	\cline{1-11}
    	\pgfrac{j_1}{k_1} & 0 & 0 & 0 & 0 & 0 & 0 & 0 & 0 & 0 & 0    \\
    	\vdots & 0 & 0 & 0 & 0 & 0 & 0 & 0 & 0 & 0 & 0   \\
	\pgfrac{j_L}{k_L} & 0 & 0 & 0 & 0 & 0 & 0 & 0 & 0 & 0 & 0   \\
	\end{block}
  	\end{blockarray}
	\]
	
	\vsp
	
	\emph{Case 2: $\pgfrac{4}{2\:\:5} \in \phi.$}
	
	\vsp
	
	Then by examination of the $\Hom$ matrix, $(2),(4), \pgfrac{4}{2}, \pgfrac{4}{5} \not \in \phi.$ So 
	$$\phi=\l\{(i_1), \ldots, (i_M), \pgfrac{j_1}{k_1}, \ldots, \pgfrac{j_L}{k_L}, \pgfrac{4}{2\:\:5}\r\},$$
	where 
	$$(i_m) \in \l\{(1), \ldots, (n)\r\}, \quad \pgfrac{j_l}{k_l} \in \l\{ \pgfrac{1}{3}, \pgfrac{3}{4}, \pgfrac{5}{6}, \ldots, \pgfrac{n-1}{n}\r\}.$$
	
	Then $A(\phi)$ is given by the following upper triangular matrix, whose eigenvalues are contained in the set $\{0, N_1, \ldots, N_n\}$. Notice that it is upper triangular because $(4) \not \in \phi$. Therefore, $\rho(A(\phi)) \leq \max\{N_1, \ldots, N_n\}$.
	
	\[
  	\begin{blockarray}{cccc|ccc|c}
    	{} & (i_1) & \cdots & (i_M) & \l(\gfrac{j_1}{k_1}\r) & \cdots & \pgfrac{j_L}{k_L} & \pgfrac{4}{2\:\:5} \\
    	\begin{block}{c(ccc|ccc|c)}
    	(i_1) & N_{i_1} & * & * & 0 & 0 & 0  & *  \\
	\vdots & 0 & \ddots & * & 0 & 0 & 0  & * \\
	(i_M) & 0 & 0 & N_{i_M} & 0 & 0 & 0  & * \\
    	\cline{1-8}
    	\pgfrac{j_1}{k_1} & 0 & 0 & 0 & 0 & 0 & 0 & 0    \\
    	\vdots & 0 & 0 & 0 & 0 & 0 & 0 & 0    \\
	\pgfrac{j_L}{k_L} & 0 & 0 & 0 & 0 & 0 & 0 & 0    \\
	\cline{1-8}
	\pgfrac{4}{2\:\:5} & 0 & 0 & 0 & 0 & 0 & 0 & 0  \\
	\end{block}
  	\end{blockarray}
	\]
	
	\vsp
	
	\emph{Case 3: $\pgfrac{4}{2}, \pgfrac{4}{5} \in \phi.$}
	
	\vsp
	
	Then $(2),(4),(5), \pgfrac{4}{2\:\:5} \not \in \phi$. So
	$$\phi=\l\{(i_1), \ldots, (i_M), \pgfrac{j_1}{k_1}, \ldots, \pgfrac{j_L}{k_L}, \pgfrac{4}{2},\pgfrac{4}{5}\r\},$$
	where 
	$$(i_m) \in \l\{(1), \ldots, (n)\r\}, \quad \pgfrac{j_l}{k_l} \in \l\{ \pgfrac{1}{3}, \pgfrac{3}{4}, \pgfrac{5}{6}, \ldots, \pgfrac{n-1}{n}\r\}.$$
	
	Then $A(\phi)$ is given by the following upper triangular matrix, whose eigenvalues are contained in the set $\{0, N_1, \ldots, N_n\}$. Notice that it is upper triangular because $(4) \not \in \phi$. Therefore, $\rho(A(\phi)) \leq \max\{N_1, \ldots, N_n\}$.
	
	\[
  	\begin{blockarray}{cccc|ccc|cc}
    	{} & (i_1) & \cdots & (i_M) & \l(\gfrac{j_1}{k_1}\r) & \cdots & \pgfrac{j_L}{k_L} & \pgfrac{4}{2} & \pgfrac{4}{5} \\
    	\begin{block}{c(ccc|ccc|cc)}
    	(i_1) & N_{i_1} & * & * & 0 & 0 & 0 & 0 & 0  \\
	\vdots & 0 & \ddots & * & 0 & 0 & 0 & 0 & 0 \\
	(i_M) & 0 & 0 & N_{i_M} & 0 & 0 & 0 & 0  & 0 \\
    	\cline{1-9}
    	\pgfrac{j_1}{k_1} & 0 & 0 & 0 & 0 & 0 & 0 & 0 & 0    \\
    	\vdots & 0 & 0 & 0 & 0 & 0 & 0 & 0  & 0  \\
	\pgfrac{j_L}{k_L} & 0 & 0 & 0 & 0 & 0 & 0 & 0 & 0   \\
	\cline{1-9}
	\pgfrac{4}{2} & 0 & 0 & 0 & 0 & 0 & 0 & 0 & 0  \\
	\pgfrac{4}{5} & 0 & 0 & 0 & 0 & 0 & 0 & 0 & 0  \\
	\end{block}
  	\end{blockarray}
	\]
	
	\vsp
	
	\emph{Case 4: $\pgfrac{4}{2}\in \phi, \pgfrac{4}{5} \not \in \phi$.}	
	
	\vsp
	
	Then $(2),(4), \pgfrac{4}{2\:\:5} \not \in \phi$. So
	$$\phi=\l\{\pgfrac{4}{2}, (i_1), \ldots, (i_M), \pgfrac{j_1}{k_1}, \ldots, \pgfrac{j_L}{k_L}\r\},$$
	where 
	$$(i_m) \in \l\{(1), \ldots, (n)\r\}, \quad \pgfrac{j_l}{k_l} \in \l\{ \pgfrac{1}{3}, \pgfrac{3}{4}, \pgfrac{5}{6}, \ldots, \pgfrac{n-1}{n}\r\}.$$
		
	Then $A(\phi)$ is given by the following upper triangular matrix, whose eigenvalues are contained in the set $\{0, N_1, \ldots, N_n\}$. Notice that it is upper triangular because $(4) \not \in \phi$. Therefore, $\rho(A(\phi)) \leq \max\{N_1, \ldots, N_n\}$.
	
	\[
  	\begin{blockarray}{cc|ccc|ccc}
    	{} & \pgfrac{4}{2} & (i_1) & \cdots & (i_M) & \l(\gfrac{j_1}{k_1}\r) & \cdots & \pgfrac{j_L}{k_L}  \\
    	\begin{block}{c(c|ccc|ccc)}
	\pgfrac{4}{2} & 0 & * & * & * & 0 & 0 & 0  \\
	\cline{1-8}
    	(i_1) & 0 & N_{i_1} & * & * & 0 & 0 & 0   \\
	\vdots & 0 & 0 & \ddots & * & 0 & 0 & 0  \\
	(i_M) & 0 & 0 & 0 & N_{i_M} & 0 & 0 & 0  \\
    	\cline{1-8}
    	\pgfrac{j_1}{k_1} & 0 & 0 & 0 & 0 & 0 & 0 & 0    \\
    	\vdots & 0 & 0 & 0 & 0 & 0 & 0 & 0    \\
	\pgfrac{j_L}{k_L} & 0 & 0 & 0 & 0 & 0 & 0 & 0    \\
	\end{block}
  	\end{blockarray}
	\]

	\vsp
	
	\emph{Case 5: $\pgfrac{4}{2}\not \in \phi, \pgfrac{4}{5} \in \phi$.}	
	
	\vsp
	
	Then $(4),(5), \pgfrac{4}{2\:\:5} \not \in \phi$. So
	$$\phi=\l\{\pgfrac{4}{5}, (i_1), \ldots, (i_M), \pgfrac{j_1}{k_1}, \ldots, \pgfrac{j_L}{k_L}\r\},$$
	where 
	$$(i_m) \in \l\{(1), \ldots, (n)\r\}, \quad \pgfrac{j_l}{k_l} \in \l\{ \pgfrac{1}{3}, \pgfrac{3}{4}, \pgfrac{5}{6}, \ldots, \pgfrac{n-1}{n}\r\}.$$
	
	Then $A(\phi)$ is given by the following upper triangular matrix, whose eigenvalues are contained in the set $\{0, N_1, \ldots, N_n\}$. Notice that it is upper triangular because $(4) \not \in \phi$. Therefore, $\rho(A(\phi)) \leq \max\{N_1, \ldots, N_n\}$.
	
	\[
  	\begin{blockarray}{cc|ccc|ccc}
    	{} & \pgfrac{4}{5} & (i_1) & \cdots & (i_M) & \l(\gfrac{j_1}{k_1}\r) & \cdots & \pgfrac{j_L}{k_L}  \\
    	\begin{block}{c(c|ccc|ccc)}
	\pgfrac{4}{5} & 0 & * & * & * & 0 & 0 & 0  \\
	\cline{1-8}
    	(i_1) & 0 & N_{i_1} & * & * & 0 & 0 & 0   \\
	\vdots & 0 & 0 & \ddots & * & 0 & 0 & 0  \\
	(i_M) & 0 & 0 & 0 & N_{i_M} & 0 & 0 & 0  \\
    	\cline{1-8}
    	\pgfrac{j_1}{k_1} & 0 & 0 & 0 & 0 & 0 & 0 & 0    \\
    	\vdots & 0 & 0 & 0 & 0 & 0 & 0 & 0    \\
	\pgfrac{j_L}{k_L} & 0 & 0 & 0 & 0 & 0 & 0 & 0    \\
	\end{block}
  	\end{blockarray}
	\]
	
	\vsp
	
	We have exhausted all possible cases. Therefore for $n\in\{6,7,8\}$, 
	\[
	\fpd \l(E(n)\dash\bmod \r)= \max\{N_1, \ldots, N_n\}.
	\]
	\end{proof}
	
	\vsp


\section{Modified ADE quiver algebras with different arrow orientations}

	We have calculated the $\fpd$ of modified ADE quiver algebras with arrows in a certain direction (\cref{thm:ADE}). It is natural to ask what happens if the directions of the arrows change. It is clear that the $\fpd$ remains the same upon taking the opposite algebra, which is the radical square zero bound quiver algebra of the opposite quiver \cite{CGWZZZ}. However, we do not know what happens in other cases.
	
	It is possible that the directions of the arrows do not matter (\cref{conj:ADE}). We will show that the directions of the arrows do not matter for $A(n)$ if $n \leq 3.$ Preliminary calculations for $n=4$ also suggest a positive answer to this question, but the question requires further analysis.
	
\begin{theorem} \label{thm:ADEtilde3}
For the modified ADE bound quiver algebras defined in \cref{def:modifiedADE}, if $A \in \{A(1),A(2),A(3)\}$, then $\fpd \l(A \dash \bmod\r)$ is invariant under change of direction of the arrows $x_i$.
\end{theorem}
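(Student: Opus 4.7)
The plan is first to reduce the statement to a single new case by symmetry, and then to mimic the proof strategy of \cref{thm:A_n1dir}. For $n=1$ there is nothing to prove, since there are no arrows $x_i$ to reorient. For $n=2$ the only nonstandard orientation for $x_2$ produces the opposite algebra of $A(2)$, so the claim follows from the invariance of $\fpd$ under the opposite-algebra construction noted at the beginning of this section. For $n=3$ the four possible orientations of $(x_2,x_3)$ consist of the standard one $1 \to 2 \to 3$, its full reversal $1 \leftarrow 2 \leftarrow 3$ (the opposite algebra), and the two ``V'' orientations $1 \to 2 \leftarrow 3$ and $1 \leftarrow 2 \to 3$, which are opposite to each other. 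It therefore suffices to verify the formula for $A = A(3)$ with the single new orientation $1 \to 2 \leftarrow 3$.

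For this remaining case I would proceed in parallel to the proof of \cref{thm:A_n1dir}. By \cref{lem:quivpowerful} the bricks of $A$ correspond bijectively to the bricks of $C = \kk Q'$ that are annihilated by $(\geq 2)$. Because vertex $2$ is now a sink of $Q'$, the quiver $Q'$ contains no paths of length $2$ at all, so every indecomposable $C$-module is automatically annihilated by $(\geq 2)$. Gabriel's theorem, combined with the fact that every indecomposable of a hereditary finite-type path algebra is a brick, then yields exactly six bricks: the three simples $(1), (2), (3)$, and three further bricks with dimension vectors $(1,1,0), (0,1,1), (1,1,1)$, which by \cref{fct:collection} \cref{prp:projinjrepQ} are $\tilde{P}_1$, $\tilde{P}_3$, and $\tilde{I}_2$ respectively.

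Next I would assemble the $\Hom$ and $\Ext^1$ matrices among these six bricks and enumerate the maximal brick sets. The $\Hom$-computations are routine once the projectives and injectives are identified, and a direct inspection shows that the maximal brick sets are $\{(1),(2),(3)\}$, $\{(1),\tilde{P}_3\}$, $\{(3),\tilde{P}_1\}$, $\{\tilde{P}_1,\tilde{P}_3\}$, and the singleton $\{\tilde{I}_2\}$. Most $\Ext^1$ vanishings are immediate from \cref{lem:rad}: the projective-projective and injective-injective blocks are zero within any brick set, and the remaining directed entries $\Ext^1_A(\tilde{P}_3,(1))$ and $\Ext^1_A(\tilde{P}_1,(3))$ vanish by \cref{lem:ext1projnohom}, since the relevant projectives admit no maps to the target simples. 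For each of the five brick sets the adjacency matrix is then upper triangular in a suitable reordering, with diagonal entries drawn from $\{0, N_1, N_2, N_3\}$, so $\rho(A(\phi)) \leq \max\{N_1, N_2, N_3\}$; the brick set $\{(1),(2),(3)\}$ attains this bound, which gives the theorem.

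The main obstacle is the new brick $\tilde{I}_2$ (dimension vector $(1,1,1)$), which has no analogue in \cref{thm:A_n1dir}: one must verify that it cannot be grouped with any other brick and identify its self-$\Ext^1$. Fortunately, $\tilde{I}_2$ has nonzero $\Hom$ in some direction with every other brick---it surjects onto both $(1)$ and $(3)$, receives a nonzero map from $(2)$, and contains $\tilde{P}_1$ and $\tilde{P}_3$ as submodules---so it can only appear as a singleton brick set, and $\Ext^1_A(\tilde{I}_2,\tilde{I}_2) = 0$ by \cref{lem:rad} \cref{lem:radinj}. Once this bookkeeping is done the proof concludes exactly as in \cref{thm:A_n1dir}.
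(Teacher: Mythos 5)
Your proposal is correct and follows essentially the same route as the paper's proof: reduce to the single new orientation $1 \to 2 \leftarrow 3$ via the opposite-algebra invariance, identify the six indecomposable bricks, enumerate the same five maximal brick sets, and deduce $\Ext^1$-vanishings from \cref{lem:rad} and \cref{lem:ext1projnohom}. The only cosmetic differences are your explicit observation that $B = C$ here because vertex $2$ is a sink, and your choice of \cref{lem:ext1projnohom} rather than \cref{lem:rad}(\ref{lem:simproj}) for the entries $\Ext^1_A(\tilde{P}_3,S_1)$ and $\Ext^1_A(\tilde{P}_1,S_3)$, both of which are equally valid.
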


\begin{proof} 

	The proof for $n=1, 2$ is clear because there is only one possible direction for the arrows up to isomorphism.
	
	For $n=3,$ let $A$ be the finite-dimensional algebra described by the following quiver, with relations such that paths of length greater than or equal to 2 are 0:
	
	\[
	\begin{tikzcd}
		1  \arrow[out=155, in=205, loop] \arrow[out=162, in=198, loop]  \arrow[out=148, in=212, loop, swap, "a_n"] \arrow[r,"x"]  & 2  \arrow[out=238, in=302, loop, swap, "b_m"] \arrow[out=250, in=290, loop] & 3 \arrow[l, swap, "y"] \arrow[out=32, in=328, loop, "c_l"]		
	\end{tikzcd} 
	\]
	
	Label the loops $a_1, \ldots, a_N, b_1, \ldots, b_M, c_1, \ldots, c_L.$ The brick objects of $A$ are in one-to-one correspondence with the brick objects in $B$, where $B$ is the finite dimensional algebra described by the following quiver:
	
	\[
	\begin{tikzcd}
		1 \arrow[r, "x"] & 2 & 3 \arrow[l, swap, "y"]
	\end{tikzcd}
	\]
	
	The six non-isomorphic indecomposable representations of $B$ are well known \cite[Ex. 1.14]{Sch}. We have the three simples $S_1=I_1, S_2=P_2, S_3=I_3,$ and the additional representations are $P_1, P_3, I_2.$ All of these are brick objects.
	
	\begin{align*}
	S_1&=(1)=(1,0,0) \\
	S_2&=(2)=(0,1,0) \\
	S_3&=(3)=(0,0,1) \\
	P_1&=\pgfrac{1}{2}=(1,1,0) \\
	P_3&=\pgfrac{3}{2}=(0,1,1) \\
	I_2&=\pgfrac{1 \:\: 3}{2}=(1,1,1)
	\end{align*}
	
	We can determine the brick sets by examining the $\Hom$ matrix below, where $H_{ij}=\dim \Hom(i,j).$ The $*$ represents a non-zero entry. 
	\[
	H = 
    	\bordermatrix{ & S_1 & S_2 & S_3 & P_1 & I_2 & P_3 \cr
      	S_1 & 1 & 0 & 0 & 0 & 0 & 0 \cr
      	S_2 & 0 & 1 & 0 & * & 1 & * \cr
      	S_3 & 0 & 0 & 1 & 0 & 0 & 0 \cr
	P_1 & * & 0 & 0 & 1 & * & 0 \cr
	I_2 & * & 0 & * & 0 & 1 & 0 \cr
	P_3 & 0 & 0 & * & 0 & * & 1}
	\]

	\vsp
	
	The maximal brick sets are:
	\begin{align*}
		\phi_1&=\{I_2\}, \\
		\phi_2&=\{S_1, S_2, S_3\}, \\
		\phi_3&= \{ P_3, S_1 \}, \\
		\phi_4&= \{P_1, P_3\}, \\
		\phi_5&=\{P_1, S_3\}.
	\end{align*}
	
	We have $\rho(A(\phi_1))=|\Ext^1_A(I_2,I_2)|=0$ by \cref{lem:rad} \cref{lem:radinj} since $I_2 \not \cong S_2$.
	
	Since $|\Ext^1_A(S_i, S_j)|$ is the number of arrows from $i$ to $j$, we have
	$$\rho(A(\phi_2))=\rho 	\begin{pmatrix} 
						N & 1 & 0 \\ 
						0 & M & 0 \\
						0 & 1 & L		
						\end{pmatrix} 
				=\max \{N,M,L\}.$$
				
	We have by \cref{lem:rad} \cref{lem:radproj}, \cref{lem:simproj} that
	\begin{align*}
	\rho(A(\phi_3))	&=\rho 	\begin{pmatrix}
						|\Ext^1(P_3,P_3)| & |\Ext^1(P_3, S_1)| \\
						|\Ext^1(S_1, P_3)| & |\Ext^1(S_1,S_1)|
						\end{pmatrix} \\
				&= \rho	\begin{pmatrix}
						0 & 0 \\
						|\Ext^1(S_1, P_3)| & N
						\end{pmatrix} \\
				&=N.
	\end{align*}
	
	By \cref{lem:rad} we have
	\begin{align*}
	\rho(A(\phi_4))	 	&=\rho	\begin{pmatrix}
							0 & 0 \\
							0 & 0
							\end{pmatrix} =0.
	\end{align*}
	
	By \cref{lem:rad} \cref{lem:radproj}, \cref{lem:simproj} we have
	\begin{align*}
	\rho(A(\phi_5))	&=\rho 	\begin{pmatrix}
						|\Ext^1(P_1,P_1)| & |\Ext^1(P_1, S_3)| \\
						|\Ext^1(S_3, P_1)| & |\Ext^1(S_3,S_3)|
						\end{pmatrix} \\
				&= \rho	\begin{pmatrix}
						0 & 0 \\
						|\Ext^1(S_3, P_1)| & L
						\end{pmatrix} \\
				&=L.
	\end{align*}
	
	Therefore, 
	$$\fpd \l(A\dash\bmod\r)=\max\{N,M,L\}.$$
	
	Every modified ADE bound quiver algebra with arrows in an arbitrary orientation is isomorphic to one of type $A, A^{op},$ or $A(3)$. Since $\fpd$ is invariant under taking the opposite algebra by \cite[Cor. 3.10]{CGWZZZ}, the theorem is true for $n=3$.
	\end{proof}
	
	\vsp


\bigskip

\subsection*{Acknowledgments}
The author would like to thank her advisor James Zhang
for many useful conversations on the subject. This work was partially 
supported by the US National Science Foundation (Grant Nos. 
DMS-1402863 and DMS-1700825). 

\bibliographystyle{alpha}

\bibliography{bibliography}

\end{document}